\newtheorem{thm}{Theorem}[section]
\newtheorem{cor}[thm]{Corollary}
\newtheorem{lem}[thm]{Lemma}
\newtheorem{prop}[thm]{Proposition}
\theoremstyle{definition}
\theoremstyle{remark}
\newtheorem{rem}[thm]{Remark}
\theoremstyle{example}
\numberwithin{equation}{section}
\def\de{\end{equation}}
\def\edar{\end{eqnarray}}
\def\l{\left}\def\r{\right}
\def\[{\l[} \def\]{\r]}
\def\({\l(} \def\){\r)}
\def\bar{\overline}
\def\beqlb{\begin{eqnarray}}\def\eeqlb{\end{eqnarray}}
\def\beqnn{\begin{eqnarray*}}\def\eeqnn{\end{eqnarray*}}
\def\d{{\mbox{\rm d}}}
\title{\bf  {Quasi-stationary distributions for single death processes with killing}}
\author[ ]{Zhe-Kang Fang\thanks{Corresponding author. \newline \indent \indent E-mail addresses: zkfang@mail.bnu.edu.cn (Z.-K. Fang), maoyh@bnu.edu.cn (Y.-H. Mao).}}
\author[ ]{Yong-Hua Mao}
\affil[ ]{\it \small School of Mathematical Sciences, Beijing Normal University, Laboratory of Mathematics and Complex Systems, Ministry of Education, Beijing 100875, China}
\date{}
\begin{document}

\maketitle


\begin{abstract}
This paper studies the quasi-stationary distributions for a single death process (or downwardly skip-free process) with killing defined on the non-negative integers, corresponding to a non-conservative transition rate matrix. The set $\{1,2,3,\cdots\}$ constitutes an irreducible class and $0$ is an absorbing state. For the single death process with three kinds of killing term, we obtain the existence and uniqueness of the quasi-stationary distribution. Moreover, we derive the conditions for exponential convergence to the quasi-stationary distribution in the total variation norm. Our main approach is based on the Doob's $h$-transform, potential theory and probabilistic methods.  
\end{abstract}

{\bf Keywords and phrases:} Quasi-stationary distribution;  single death processes with killing; potential theory; Doob's $h$-transform.

{\bf Mathematics Subject classification(2020): 60J27 60F99} 


\section{Introduction and main results}

Let $X=(X_t)_{t\geq 0}$ be a single death process (or downwardly skip-free process) with killing on $E=\{0,1,2,\cdots\}$ and irreducible on $E_+=\{1,2,3\cdots\}$, corresponding to a totally stable and non-conservative transition rate matrix $Q=(q_{ij})_{i,j\geq 0}$ satisfying
$q_i=-q_{ii}\geq \sum_{n\neq i} q_{in}$, $q_{0j}=0$, $q_{i,i-1}>0$, $q_{i,i-k}=0$, for any $i\geq 1, j\geq 0, k\geq 2$,
and transition function 
\begin{equation*}
p_{ij}(t)=\mathbb{P}_i[X_t=j,t<T_\partial],
\end{equation*}
for any $i,j\geq 0$, where $T_\partial=\inf\{t\geq 0: X_t\notin E\}$ denote the time for $X$ exit $E$ and we denote the first hitting time by $T_i=\inf\{t\geq 0:X_t=i\}$ for any $i\geq 0$. Obviously, $0$ is an absorbing state. We denote the killing term by $c=(c_n)_{n\geq 1}=(q_n-\sum_{j\neq n}q_{nj})_{n\geq 1}$. Assume that $c$ is not always 0.
If we add a cemetery point $\partial$ to the state space of $X$ and let $q_{i\partial}=c_i$, it can be regarded as a process on $E\cup\{\partial\}$ and $0$ and $\partial$ are two absorbing states. 

A quasi-stationary distribution (QSD in short) of the single death process with killing $X$ in $E_+$ is a probability measure on $E_+$ such that for any $j\geq 1$ and any $t>0$,
\begin{equation*}
\mathbb{P}_\nu[X_t=j|t<T_0\wedge T_\partial]=\nu(j).
\end{equation*}

QSD is a classical topic in the research of the long term behavior of Markov processes. Three main areas of study are identified: (1) determining the existence and uniqueness of the QSD; (2) identifying the QSD's domain of attraction; and (3) calculating the rate of convergence to the QSD. 

Birth and death process is a special case of single death process and the issue on the QSDs for birth and death processes with killing has attracted much attention. The existence of QSDs for birth and death processes with killing has been studied in \cite{V12}. 
For birth and death process with bounded killing ($\sup_{n\geq 1} c_n<\infty$), the sufficient and necessary condition for the uniform exponential convergence to a unique QSD in the total variation norm has been obtained using probabilistic methods by Champagnat and Villemonais in \cite{CV16}. In \cite{V18}, Velleret has provided a sufficient condition for exponential convergence in the total variation norm. He has also pointed out that at least for some of the models, the speed of convergence can not be uniformly bounded over all initial conditions in \cite[Theorem 4.1]{V18}.

The study of QSDs for single death processes without killing started with \cite{K93}. In recent years, \cite{Z18} and \cite{Y22} studied this topic and made significant progress. The uniform exponential convergence to QSD has been studied in \cite{Z18} by Y.-H. Zhang. In \cite{Y22}, Kosuke employed the potential theory of single death processes to study the existence and uniqueness of QSDs. By combining their results, we obtain that 
(1) the existence of QSD is equivalent to the exponential decay;
(2) the uniqueness of QSD is equivalent to the uniform exponential decay, which is also equivalent to the uniform exponential convergence to QSD for the conditional distributions in the total variation norm. From \cite{VE91}, these results are consistent with the results for birth and death processes. We will revisit their results in Section 3. The branching process with linear killing represents a specific instance of the single death process with killing, and the study referenced in \cite{CNZ14} delves into the QSD of this process by employing generating functions. However, there is no clear answer to this issue for the general single death process with killing.

Extensive research on the QSDs for birth and death processes with killing has revealed great differences between the killing case and the non-killing case, and the former is much more complex. Firstly, the exponential decay of $X$ may not imply the existence of QSD in the killing case. Secondly, according to \cite[Theorem 4.1]{V18}, the uniqueness of QSD for single death processes with killing does not imply the uniform exponential convergence in the total variation norm. 

The main purpose of this paper is to study the QSDs for single death processes with killing in $E_+$. In \cite{V12}, the killing term of birth and death processes are divided into two cases: ``small killing" case (\cite[Theorem 1]{V12}) and ``large killing" case (\cite[Theorem 2]{V12}). In this paper, we study three kinds of killing term. We study the ``small killing" case in Theorems \ref{exi_uni} and \ref{exp_con}, the ``bounded killing" case in Theorem \ref{uni_exp_con} and the ``large killing" case in Theorem \ref{large_killing}.

In our study, we use the non-killing process $Y=(Y_t)_{t\geq 0}$ corresponding to $X$, which is the minimal single death process on $E$ corresponding to the conservative transition rate matrix $Q^{(Y)}=(q_{ij}^{(Y)})_{i,j\geq 0}$ defined as 
\begin{equation*}
q_{ij}^{(Y)}=
\begin{cases}
	q_{ij} & \qquad \text{ for }i\neq j,\\
	-q_i+c_i & \qquad \text{ for } i=j.
\end{cases}
\end{equation*}
$T_i^{(Y)}=\inf\{t\geq 0:Y_t=i\}$ represents the hitting time of $i\geq 0$ for the process $Y$. It should be mentioned here that the QSDs for $X$ and $Y$ can be quite different. For example, \cite{ZZ13} proved that the linear birth and death process with linear killing has a unique QSD while the linear birth and death process without killing has a continuum of QSDs. 

To state our main results, we need some notations. Define
\begin{equation*}
G_n^{(n)}=1,\quad n\geq 1, \qquad G_n^{(k)}=\sum_{l=n+1}^{k}\frac{q_n^{(l)}G_l^{(k)}}{q_{n,n-1}}, \quad k>n\geq 1,
\end{equation*}
where
\begin{equation*}
q_n^{(k)}=\sum_{j=k}^{\infty}q_{nj}.
\end{equation*}
And for any $i,j\geq 0$,
\begin{equation*}
W(i,j)=\sum_{k=i+1}^{j}\frac{G_k^{(j)}}{q_{j,j-1}},
\end{equation*}
where we use the convention that $\sum_{\emptyset}=0$.

Our first theorem provides sufficient and necessary conditions for the existence of QSD for $X$ in $E_+$ under the ``small killing" condition: $\sum_{u=1}^{\infty} c(u)W(0,u)<\infty$. It is important to note that this condition is equivalent to
\begin{equation*}
\lim_{x\rightarrow\infty} \mathbb{P}_x[T_0<T_\partial]>0,
\end{equation*}
as will be demonstrated in Proposition \ref{small killing}.

\begin{thm}\label{exi_uni}
Let $X$ be a single death process with killing on $E$ and irreducible on $E_+$. Assume that $Y$ is the non-killing process corrsponding to $X$ and for any $i\geq 1$, $\mathbb{P}_i[T_0^{(Y)}<\infty]=1$. Assume that $\sum_{u=1}^{\infty} c(u)W(0,u)<\infty$. The existence of QSD for $X$ in $E_+$ is equivalent to the exponential decay of $X$, that is, for any $x\geq 1$,
\begin{equation*}
	\lambda_0^{(X)}=\sup\{\lambda>0:\mathbb{E}_x[e^{\lambda (T_0\wedge T_\partial)}]<\infty\}>0.
\end{equation*}
\end{thm}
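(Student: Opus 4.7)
The plan is to prove both directions separately: the necessity is a standard consequence of the QSD definition, while the sufficiency reduces to the non-killing case via a Doob $h$-transform. For the necessity ($\Rightarrow$), which does not use the small killing hypothesis, suppose $\nu$ is a QSD for $X$ on $E_+$. The Markov property together with the QSD identity yields that $t \mapsto \mathbb{P}_\nu[t < T_0 \wedge T_\partial]$ is multiplicative, hence equal to $e^{-\theta(\nu) t}$ for some $\theta(\nu) > 0$, so $\mathbb{E}_\nu[e^{\lambda(T_0 \wedge T_\partial)}] < \infty$ for every $\lambda < \theta(\nu)$. Picking any $x \geq 1$ with $\nu(x) > 0$ gives $\mathbb{E}_x[e^{\lambda(T_0 \wedge T_\partial)}] \leq \mathbb{E}_\nu[e^{\lambda(T_0 \wedge T_\partial)}]/\nu(x) < \infty$, and irreducibility of $X$ on $E_+$ propagates this finite-moment bound to every starting state (at the cost of a slightly smaller $\lambda$), yielding $\lambda_0^{(X)} > 0$.

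For the sufficiency ($\Leftarrow$), I would apply Doob's $h$-transform with $h(x) = \mathbb{P}_x[T_0 < T_\partial]$. Under the small killing hypothesis, Proposition \ref{small killing} gives $\lim_{x \to \infty} h(x) > 0$; combined with $h(0) = 1$ and $h > 0$ on $E_+$ (by irreducibility), this yields $\inf_{x \in E} h(x) > 0$. The function $h$ satisfies the harmonic identity $(Qh)(i) = 0$ for $i \in E_+$ (setting $h(\partial) = 0$), so the transformed rates $\tilde{q}_{ij} = q_{ij} h(j)/h(i)$ define a conservative $Q$-matrix on $E$ whose associated process $\tilde{Y}$ is a non-killing single death process, absorbed at $0$ and irreducible on $E_+$; concretely, $\tilde{Y}$ is the law of $X$ conditioned on $\{T_0 < T_\partial\}$. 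From the $h$-transform identity $\mathbb{P}_x^{\tilde{Y}}[\tilde{Y}_t = j, \, t < T_0^{(\tilde{Y})}] = \mathbb{P}_x[X_t = j, \, t < T_0 \wedge T_\partial] \, h(j)/h(x)$, a direct calculation shows that $\nu \mapsto \tilde{\nu}$ with $\tilde{\nu}(j) \propto \nu(j) h(j)$ and its inverse $\tilde{\nu} \mapsto \nu$ with $\nu(j) \propto \tilde{\nu}(j)/h(j)$ are mutually inverse bijections between QSDs of $X$ on $E_+$ and QSDs of $\tilde{Y}$ on $E_+$, both preserving the absorption rate $\theta$.

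Since $\tilde{Y}$ is a non-killing single death process, the results recalled in Section 3 from \cite{Z18} and \cite{Y22} give that $\tilde{Y}$ admits a QSD on $E_+$ if and only if $\lambda_0^{(\tilde{Y})} > 0$. It therefore suffices to show $\lambda_0^{(X)} > 0 \Rightarrow \lambda_0^{(\tilde{Y})} > 0$, which follows from the identity $h(x) \mathbb{E}_x^{\tilde{Y}}[e^{\lambda T_0^{(\tilde{Y})}}] = \mathbb{E}_x[e^{\lambda T_0}; T_0 < T_\partial] \leq \mathbb{E}_x[e^{\lambda(T_0 \wedge T_\partial)}]$: for any $\lambda < \lambda_0^{(X)}$ the right-hand side is finite, hence so is the left-hand side. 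Combining these steps, $\lambda_0^{(X)} > 0$ yields a QSD for $\tilde{Y}$, which in turn yields a QSD for $X$ via $\nu(j) \propto \tilde{\nu}(j)/h(j)$.

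The main obstacle lies in carefully setting up the $h$-transform in the presence of killing and verifying that the small killing hypothesis is precisely what makes the inverse bijection $\tilde{\nu} \mapsto \nu$ normalizable: the sum $\sum_j \tilde{\nu}(j)/h(j)$ is finite precisely because $\inf h > 0$, and without the small killing hypothesis this inverse step could fail (since $h$ could vanish at infinity). Establishing the harmonic identity $(Qh)(i) = 0$ on $E_+$ rigorously, and justifying the $h$-transform identity for the sub-Markovian transition density $p_{ij}(t)$ in the presence of the killing term, are the technical ingredients that must be carried out carefully before the reduction to the non-killing results can be invoked.
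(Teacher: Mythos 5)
Your proposal is correct and follows essentially the same route as the paper: necessity via the standard QSD--exponential-absorption argument (the paper cites \cite[Proposition 2.4]{CMS13}), and sufficiency via the Doob $h$-transform with $h(x)=\mathbb{P}_x[T_0<T_\partial]$, using the small-killing condition to get $h$ bounded below (the paper's $1/Z^{(c)}(0,\infty)\leq h\leq 1$ from Proposition \ref{small killing}) so that $\nu\mapsto\tilde{\nu}$ is a genuine bijection of QSDs, and then reducing to the non-killing results of Section 3 (Proposition \ref{no-killing unique} and Lemma \ref{QSD_Y}), exactly as in the paper's Proposition \ref{h process} and proof of Theorem \ref{exi_uni}. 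The only substantive divergence is how the decay parameter is transferred: you use the direct identity $h(x)\,\mathbb{E}^{\tilde{Y}}_x[e^{\lambda T_0^{(\tilde{Y})}}]=\mathbb{E}_x[e^{\lambda T_0};\,T_0<T_\partial]$, which is valid and arguably cleaner than the paper's detour through the variational characterization of the decay rate in \cite{CJL17}; just be aware that invoking Section 3 also requires $\mathbb{P}_i[T_0^{(\tilde{Y})}<\infty]=1$ (established in Proposition \ref{h process}(3) via \cite{MYZ22}), which in your setup follows directly from the transform identity since $\mathbb{P}^{\tilde{Y}}_i[T_0^{(\tilde{Y})}>t]=\mathbb{P}_i[t<T_0<T_\partial]/h(i)\to 0$ as $t\to\infty$.
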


The next theorem studies the uniqueness of QSD for $X$ in $E_+$ under the ``small killing" condition and shows that it is equivalent to the exponential convergence to QSD in the total variation norm.

\begin{thm}\label{exp_con}
Let $X$ be a single death process with killing on $E$ and irreducible on $E_+$. Assume that $Y$ is the non-killing process corresponding to $X$ and for any $i\geq 1$, $\mathbb{P}_i[T_0^{(Y)}<\infty]=1$. Assume that $\sum_{u=1}^{\infty} c(u)W(0,u)<\infty$. Then there exists a unique QSD $\nu$ for $X$ in $E_+$ if and only if
\begin{equation}\label{unique}
	\sum_{u=1}^{\infty}W(0,u)<\infty.
\end{equation}
If \eqref{unique} holds, then for any initial distribution $\mu$ on $E_+$, there exist constants $C(\mu),\gamma>0$ such that for any $t>0$,
\begin{equation*}
	||\mathbb{P}_\mu[X_t\in \cdot| t<T_0\wedge T_\partial]-\nu||_{TV}\leq C(\mu)e^{-\gamma t},
\end{equation*}
where $||\cdot||$ is the total variation norm.
\end{thm}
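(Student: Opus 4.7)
The plan is to reduce to the non-killing case via Doob's $h$-transform and apply the QSD theory for non-killing single death processes recalled in Section~3. Set $h(i):=\mathbb{P}_i[T_0<T_\partial]$. Under the small killing hypothesis, Proposition~\ref{small killing} yields $\lim_{x\to\infty}h(x)>0$; combined with irreducibility on $E_+$, this gives a uniform two-sided bound $h(i)\in[c_0,1]$ for some $c_0>0$ and all $i\geq 1$. A first-step analysis shows that $h$ is $Q$-harmonic on $E_+$ with boundary values $h(0)=1$ and $h(\partial)=0$, so the rates $\tilde{q}_{ij}:=q_{ij}h(j)/h(i)$ (for $i\neq j$) are conservative and define a non-killing single death process $\tilde{X}$ on $E$ with $0$ absorbing.

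A direct computation from $\nu Q=-\theta\nu$ shows that $\nu$ is a QSD of $X$ with exponent $\theta$ if and only if $\tilde{\nu}(j):=\nu(j)h(j)/\sum_k\nu(k)h(k)$ is a QSD of $\tilde{X}$ with the same $\theta$; in particular, uniqueness of QSD for $X$ is equivalent to uniqueness for $\tilde{X}$. By the non-killing theory recalled in Section~3 (following \cite{Y22,Z18}), the latter is in turn equivalent to $\sum_u\tilde{W}(0,u)<\infty$, and in that case $\tilde{X}$ converges to $\tilde{\nu}$ uniformly exponentially in the total variation norm. Two tasks remain: (i) prove $\sum\tilde{W}(0,u)<\infty\iff\sum W(0,u)<\infty$, and (ii) transfer the convergence back to $X$.

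For (ii), writing $\pi_t^\mu(j):=\mathbb{P}_\mu[X_t=j\,|\,t<T_0\wedge T_\partial]$ and relating it to the analogous quantity $\tilde{\pi}_t^{\tilde{\mu}}$ for $\tilde{X}$ started from $\tilde{\mu}\propto\mu h$ via the identity $\pi_t^\mu(j)=(\tilde{\pi}_t^{\tilde{\mu}}(j)/h(j))\left(\sum_k\tilde{\pi}_t^{\tilde{\mu}}(k)/h(k)\right)^{-1}$, the uniform exponential convergence of $\tilde{\pi}_t^{\tilde{\mu}}\to\tilde{\nu}$ at rate $\gamma$ yields convergence $\pi_t^\mu\to\nu$ at the same rate $\gamma$; the prefactor $C(\mu)$ picks up $h$-dependence on $\supp\mu$, which is consistent with, and explains, the non-uniformity of the bound stated in the theorem. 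The main obstacle I expect is (i): the two-sided bound $h\in[c_0,1]$ does not yield a pointwise comparison between $\tilde{W}(0,u)$ and $W(0,u)$ because the multiplicative constants compound through the recursion defining $G_n^{(k)}$. The cleanest resolution appears to go through the probabilistic identifications $h(u)\tilde{W}(0,u)=\mathbb{E}_u[T_0;\,T_0<T_\partial]$ and $W(0,u)=\mathbb{E}_u[T_0^{(Y)}]$, comparing the hitting time of $0$ under $X$ conditioned on non-killing with that under $Y$ and exploiting the small killing assumption to match the two summability conditions directly.
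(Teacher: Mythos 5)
Your overall strategy for the uniqueness part coincides with the paper's: pass to the Doob $h$-transformed chain $\bar X$ with $h(i)=\mathbb{P}_i[T_0<T_\partial]$, match QSDs of $X$ and $\bar X$ through $h$, and invoke the non-killing theory of Section 3. However, the step you yourself flag as the main obstacle --- showing $\sum_u \bar W(0,u)<\infty \iff \sum_u W(0,u)<\infty$ --- is a genuine gap, and the ``probabilistic identifications'' you propose to close it are false as stated: $W(0,u)$ is \emph{not} $\mathbb{E}_u[T_0^{(Y)}]$ (rather $\mathbb{E}_u[T_0^{(Y)}]=\sum_y\bigl(W(0,y)-W(u,y)\bigr)$ and $\sum_u W(0,u)=\sup_x\mathbb{E}_x[T_0^{(Y)}]$), and likewise $h(u)\bar W(0,u)$ is a single term of a series, not the expectation $\mathbb{E}_u[T_0;T_0<T_\partial]$. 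Even after correcting these to their summed/sup versions, the direction $\sup_x\mathbb{E}_x[T_0\mathbf{1}_{[T_0<T_\partial]}]<\infty\Rightarrow\sup_x\mathbb{E}_x[T_0^{(Y)}]<\infty$ is not immediate, because $T_0^{(Y)}$ and the Feynman--Kac weight $\exp(-\int_0^{T_0^{(Y)}}c(Y_s)\,\d s)$ are negatively correlated; this is precisely where the small-killing hypothesis must enter quantitatively. The paper closes this gap by a different, algebraic route: the induction in Proposition \ref{h process} establishing the exact formula \eqref{bar(G)}, which gives $\bar W(0,i)=W^{(c)}(0,i)Z^{(c)}(i,\infty)/Z^{(c)}(0,\infty)$, combined with the two-sided bound $W(0,j)\le W^{(c)}(0,j)\le e^A W(0,j)$ from \eqref{W_c W}. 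Note also that you assert, without proof, that $\bar X$ is conservative and hits $0$ almost surely; in the paper both facts are nontrivial (the latter uses the criterion of \cite{MYZ22}) and their proofs again run through \eqref{bar(G)}.

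For the convergence statement your route is genuinely different from the paper's and is sound. The paper does \emph{not} transfer convergence through the $h$-transform; it verifies Velleret's conditions (C1)--(C3) of Lemma \ref{exp_suf} directly for $X$, using the Green-function identity of Theorem \ref{two side exit} to get $\sup_{x>k_0}\mathbb{E}_x[e^{\rho(T_{k_0}\wedge T_\partial)}]<\infty$. Your identity $\pi_t^\mu(j)=\bigl(\tilde\pi_t^{\tilde\mu}(j)/h(j)\bigr)\bigl(\sum_k\tilde\pi_t^{\tilde\mu}(k)/h(k)\bigr)^{-1}$ is correct, and since $1\le 1/h\le Z^{(c)}(0,\infty)$ the normalization map $m\mapsto (m/h)/\langle m,1/h\rangle$ is uniformly Lipschitz in total variation; combined with statement (4) of Proposition \ref{no-killing unique} applied to $\bar X$, this yields exponential convergence with a constant \emph{independent} of $\mu$, i.e., a uniform bound, which is stronger than the stated conclusion (and does not conflict with the paper's remarks on non-uniformity, which concern regimes where the small-killing hypothesis fails). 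So part (ii) of your plan is an attractive alternative; the proof as a whole, however, is incomplete until the summability comparison in part (i) is actually established.
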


It is important to note that \eqref{unique} is equivalent to the coming down from infinity for $Y$, which is also consistent with the uniform exponential decay of $Y$ as discussed in Section 3. We will introduce a property of $X$ called ``coming down from infinity before killing" and show that it is equivalent to  ``$\sum_{u=1}^{\infty}(1+c(u))W(0,u)<\infty$" in Section 4.1.

The main tools used in the proof of the existence and uniqueness of QSD for $X$ in $E_+$ in above two theorems are Doob's $h$-transform and the results of QSDs for single death processes without killing. We obtain the explicit representation of the harmonic function $h$ for $X$ in Section 2 by studying the potential theory of $X$. Recently, the non-killing case has been studied by Yamato in \cite[Section 2]{Y22}. We generalize the exit identities to a $\omega$-killed version, for which previous results are special cases. We derive the exit identities by using the Poisson's equation established in \cite[Proposition 2.7]{CZ14}. The main tool to prove the exponential convergence in Theorem \ref{exp_con} is \cite[Theorem 2.1-2.3]{V18}. 

The following theorem provides a sufficient and necessary condition for the uniform exponential convergence of conditional distribution to QSD of single death processes with bounded killing ($\sup_i c_i<\infty$) in the total variation norm. The idea of the proof benefits from \cite{CV16}.

\begin{thm}\label{uni_exp_con}
Let $X$ be a single death process with killing on $E$ and irreducible on $E_+$. Assume that $Y$ is the non-killing process corrsponding to $X$ and for any $i\geq 1$, $\mathbb{P}_i[T_0^{(Y)}<\infty]=1$. Assume that $\sup_{n\geq 2} c_n<\infty$. There exists a QSD $\nu$ on $E_+$ for $X$ and two constants $C,\gamma>0$ such that for any initial distribution $\mu$ on $E_+$ and any $t\geq 0$,
\begin{equation*}
	||\mathbb{P}_\mu[X_t\in \cdot | t<T_0\wedge T_\partial]-\nu||_{TV}\leq C e^{-\gamma t},
\end{equation*}
if and only if $\sum_{u=1}^{\infty}W(0,u)<\infty$.
\end{thm}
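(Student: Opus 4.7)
The plan is to apply the Champagnat--Villemonais criterion from \cite{CV16}, which characterises uniform exponential convergence to a QSD in total variation by the two conditions:
\textbf{(A1)} there exist $t_0,c_1>0$ and a probability measure $\nu_0$ on $E_+$ such that $\mathbb{P}_x[X_{t_0}\in\cdot\mid t_0<T_0\wedge T_\partial]\geq c_1\nu_0$ for all $x\in E_+$; and
\textbf{(A2)} a comparison $\mathbb{P}_{\nu_0}[t<T_0\wedge T_\partial]\geq c_2\sup_{x}\mathbb{P}_x[t<T_0\wedge T_\partial]$ for all $t\geq 0$.
The bounded killing assumption $\bar c:=\sup_{n\geq 2}c_n<\infty$ will let me transfer estimates between $X$ and the non-killing companion $Y$ through the Feynman--Kac identity
\begin{equation*}
\mathbb{P}_x[t<T_0\wedge T_\partial]=\mathbb{E}_x\bigl[\exp\bigl(-\textstyle\int_0^t c_{Y_s}\,ds\bigr)\mathbf{1}_{t<T_0^{(Y)}}\bigr],
\end{equation*}
yielding the sandwich $e^{-\bar c\, t}\mathbb{P}_x[t<T_0^{(Y)}]\leq \mathbb{P}_x[t<T_0\wedge T_\partial]\leq \mathbb{P}_x[t<T_0^{(Y)}]$ (after possibly absorbing the at-most-unbounded $c_1$ via a first exit from state $1$).

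For the sufficiency direction, assume $\sum_{u\geq 1}W(0,u)<\infty$. By the discussion following Theorem \ref{exp_con} and the results revisited in Section 3, this is equivalent to $Y$ coming down from infinity and to the uniform exponential decay of $Y$. Coming down from infinity produces a finite set $F\subset E_+$ and $t_0,\delta>0$ with $\inf_{x\in E_+}\mathbb{P}_x[Y_{t_0}\in F,\,t_0<T_0^{(Y)}]\geq\delta$. Combined with the Feynman--Kac sandwich and irreducibility on $F$ to spread mass to a common probability $\nu_0$ supported on $F$ over an additional time increment, this yields (A1) for $X$. For (A2), I will exploit that bounded killing turns the uniform exponential decay of $Y$ into a two-sided comparison of survival tails for $X$: first show $\sup_x \mathbb{P}_x[t<T_0\wedge T_\partial]\leq e^{-\lambda t}$ for some $\lambda>0$ via the $h$-transform from Section 2 (the harmonic function $h$ is bounded above under $\sum W(0,u)<\infty$ and bounded killing), and then bound $\mathbb{P}_{\nu_0}[t<T_0\wedge T_\partial]$ from below by restricting to trajectories that remain in $F$ for a small initial interval and applying the Markov property.

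For the necessity direction, assume uniform exponential convergence with rate $\gamma$. Then (A1) and (A2) both hold for $X$. Using the Feynman--Kac sandwich in the reverse direction together with $\bar c<\infty$, these two conditions transfer to $Y$ (up to constants depending on $\bar c$, $t_0$): (A1) for $X$ implies $\inf_{x}\mathbb{P}_x[Y_{t_0}\in F\mid t_0<T_0^{(Y)}]>0$, while (A2) for $X$ gives a matching comparison of survival probabilities for $Y$. Hence $Y$ itself admits a uniform exponential convergence to its QSD in total variation, which by the equivalence recalled in Section 3 forces the uniform exponential decay of $Y$, i.e.\ $\sum_{u\geq 1}W(0,u)<\infty$.

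The hard step will be the verification of (A2) in the sufficiency argument. In the birth-and-death setting of \cite{CV16}, this reduces to a two-state comparison on neighbouring levels, but in our single-death framework the process can make arbitrarily large upward jumps, so the comparison must be carried out via the potential-theoretic exit identities and the harmonic function $h$ constructed in Section 2. Controlling $h$ and the associated Poisson equation uniformly in the starting point, while simultaneously accommodating the killing term $c$, is where the bounded killing hypothesis $\sup_{n\geq 2}c_n<\infty$ is indispensable: without it, large upward excursions could face runaway killing and destroy the uniform lower bound needed for (A2).
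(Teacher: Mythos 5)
Your framework is the right one --- the paper's proof also verifies conditions (A1) and (A2) of \cite{CV16} --- but both directions of your argument have a genuine gap at the point where the two survival probabilities must be compared. For the sufficiency, your plan for (A2) is to prove $\sup_x \mathbb{P}_x[t<T_0\wedge T_\partial]\leq e^{-\lambda t}$ and separately to bound $\mathbb{P}_{\nu_0}[t<T_0\wedge T_\partial]$ from below by forcing the chain to stay in the finite set $F$ over an initial interval. These two bounds carry different exponents: staying in $F$ costs a rate of order $\max_{y\in F}q_y$, which in general exceeds the decay parameter governing the upper bound, so the ratio $\mathbb{P}_{\nu_0}[t<T_0\wedge T_\partial]/\sup_x\mathbb{P}_x[t<T_0\wedge T_\partial]$ you obtain tends to $0$ and (A2) does not follow. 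The mechanism that actually works (and that the paper uses) is different: fix $\rho$ strictly larger than the maximal instantaneous absorption rate $\sup_n c_n+q_{10}$, so that $\inf_x\mathbb{P}_x[T_0\wedge T_\partial>s]\geq e^{-\rho s}$; use $\sum_u W(0,u)<\infty$ to make $\sup_{x}\mathbb{E}_x[T_{k_1}\wedge T_\partial]$ arbitrarily small for $k_1$ large and hence $\sup_x\mathbb{E}_x[e^{\rho(\tau_K\wedge T_\partial)}]<\infty$ for $K=\{1,\dots,k_1\}$; then split $\mathbb{P}_x[T_0\wedge T_\partial>t]$ at $\tau_K$ and absorb the pre-$\tau_K$ term via $Ae^{-\rho t}\leq A\,\mathbb{P}_{k_1}[T_0\wedge T_\partial>t]$ and the post-$\tau_K$ term via $\mathbb{P}_{k_1}[t-s<T_0\wedge T_\partial]\leq e^{\rho s}\mathbb{P}_{k_1}[t<T_0\wedge T_\partial]$. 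Matching the exponent $\rho$ on both sides is the whole point, and it is exactly where $\sup_n c_n<\infty$ enters; your sketch does not produce this matching.

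The necessity direction has a second, independent problem: condition (A2) does \emph{not} transfer from $X$ to $Y$ through the Feynman--Kac sandwich. The inequality $\mathbb{P}_{\nu_0}[t<T_0^{(Y)}]\geq \mathbb{P}_{\nu_0}[t<T_0\wedge T_\partial]\geq c_2\mathbb{P}_x[t<T_0\wedge T_\partial]\geq c_2e^{-\bar c t}\mathbb{P}_x[t<T_0^{(Y)}]$ loses the factor $e^{-\bar c t}$, which is not a constant, so you cannot conclude that $Y$ satisfies the \cite{CV16} criterion and then invoke the equivalences of Section 3. The paper's necessity argument avoids (A2) entirely: it uses only the minorization (A1) for $X$ together with the bound $\mathbb{P}_x[t_0<T_0\wedge T_\partial]\geq e^{-(\bar c+q_{10})t_0}$ to get $\inf_x\mathbb{P}_x[Y_{t_0}\leq k_2]>0$ for some finite $k_2$, then the downward skip-free property to convert this into a uniform geometric tail for $T_{k_2}^{(Y)}$, and finally a finite induction from $k_2$ down to $0$ to conclude $\sup_x\mathbb{E}_x[T_0^{(Y)}]=\sum_u W(0,u)<\infty$. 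You should replace your transfer-of-(A2) step by this skip-free induction.
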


The following theorem is about the result for the ``large killing" case and we obtain the existence and uniqueness of QSD and the exponential convergence to QSD.

\begin{thm}\label{large_killing}
Assume that for any $n\geq 1$, $\mathbb{P}_n[T_0\wedge T_\partial<\infty]=1$ and 
\begin{equation*}
	\liminf_{n\rightarrow \infty} c_n>\inf_{n\geq 1} q_n.
\end{equation*}
Then there exists a unique QSD $\nu$ for $X$ and for any initial distribution $\mu$ on $E_+$ and any $t>0$, there exist constants $C(\mu)$ and $\gamma$ such that
\begin{equation*}
	||\mathbb{P}_\mu[X_t\in \cdot| t<T_0\wedge T_\partial]-\nu||_{TV}\leq C(\mu)e^{-\gamma t}.
\end{equation*}
\end{thm}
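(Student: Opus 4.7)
The strategy parallels Theorems \ref{exi_uni}--\ref{exp_con}: we first build a QSD via tightness and a fixed-point argument, then verify the hypotheses of \cite[Theorems 2.1--2.3]{V18} to deduce exponential convergence. The role of the ``large killing'' hypothesis $\liminf_n c_n > \inf_n q_n$ is to provide a strict Lyapunov gap that forces the conditioned process to concentrate on a bounded set of $E_+$.

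Choose $\eps>0$ and an integer $N\geq 2$ such that $c_n\geq \inf_m q_m + 2\eps$ for every $n\geq N$. The sub-Markov generator $L$ of $X$ on $E_+$ (killed at both $0$ and $\partial$) satisfies $L\mathbf{1}(n)=-(q_{n,0}+c_n)$, whence $L\mathbf{1}(n)\leq -(\inf_m q_m+2\eps)$ for $n\geq N$, while $L\mathbf{1}$ is bounded on the finite set $\{1,\ldots,N-1\}$. A Foster--Lyapunov argument using this test function, together with the single-death structure, yields an exponential upper bound on the sub-Markov semigroup $P_t\mathbf{1}_{\{n\geq N\}}$ that decays strictly faster (by a margin of order $\eps$) than the generic survival probability $P_t\mathbf{1}(n_0)$ for states $n_0$ close to $\inf_m q_m$. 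This separation of scales implies that the family $\{\mathbb{P}_\mu[X_t\in\cdot\mid t<T_0\wedge T_\partial]\}_{t\geq 0}$ is tight: the conditional mass on $\{n\geq N\}$ decays to zero. A Krylov--Bogolyubov averaging argument on $[0,T]$ then extracts a weak limit $\nu$, which one verifies is a QSD.

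For uniqueness and the exponential rate, we verify Velleret's assumptions exactly as in the proof of Theorem \ref{exp_con}: (i) a local Doeblin minorization on the finite set $K=\{1,\ldots,N-1\}$, which follows from irreducibility on $E_+$ and the single-death structure; (ii) an exponential return to $K$ under the conditional law, which is a consequence of the Lyapunov estimate above; and (iii) integrability of the survival probabilities against $\nu$. Because the killing rates $c_n$ are not assumed bounded, the minorization constants depend on the starting point, which is precisely why the final bound features $C(\mu)$ rather than a uniform constant as in Theorem \ref{uni_exp_con}.

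The main obstacle is to translate the Foster--Lyapunov drift inequality, which governs \emph{unconditional} expectations, into quantitative control on the \emph{conditional} distributions. Here the Doob $h$-transform of Section~2 is again the decisive tool: it converts the conditioning on $\{t<T_0\wedge T_\partial\}$ into an honest Markov process whose long-time analysis reduces, under the large-killing assumption, to positive recurrence on a bounded subset of $E_+$. Particular care is needed when $\inf_n q_n$ is only attained asymptotically, since the process may otherwise linger at high states of low total rate; the strict inequality $\liminf_n c_n>\inf_n q_n$ is exactly what prevents such lingering from closing the gap.
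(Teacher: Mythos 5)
Your second paragraph is the right skeleton and matches the paper: the proof is a verification of Velleret's conditions (Lemma \ref{exp_suf}) with $\zeta$ a Dirac mass at a suitable state and $K$ a finite initial segment, with (C1) and (C3) exactly as in Theorem \ref{exp_con}. But the one step that makes the large-killing hypothesis bite is only gestured at in your ``separation of scales'' remark, and it deserves to be written out because it is the entire content of the proof. Pick $k_1$ with $q_{k_1}<\liminf_n c_n$ and $l_1$ with $\tilde\rho:=\inf_{n\geq l_1+1}c_n>q_{k_1}$. Then $\rho_S\leq q_{k_1}$, because $\mathbb{P}_{k_1}[t<T_0\wedge T_\partial\wedge T_{l+}]\geq e^{-q_{k_1}t}$ (the process simply stays at $k_1$); and for $x>l_1$ the total killing rate along any path staying above $l_1$ is at least $\tilde\rho$, so $\mathbb{P}_x[T_{l_1}\wedge T_\partial>t]\leq e^{-\tilde\rho t}$ and hence $\sup_{x>l_1}\mathbb{E}_x[e^{\rho(T_{l_1}\wedge T_\partial)}]\leq 1+\rho/(\tilde\rho-\rho)<\infty$ for any $\rho\in(q_{k_1},\tilde\rho)$. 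This is (C2), and no Foster--Lyapunov machinery or passage through unconditional-versus-conditional expectations is needed: the killing alone, not any drift of the dynamics, forces the fast return to $K$.

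Two parts of your proposal are off the mark. First, the opening construction of a QSD by tightness and Krylov--Bogolyubov averaging is superfluous (Velleret's theorem already delivers existence, uniqueness and the rate in one stroke) and, as sketched, incomplete: the claim that the conditional mass on $\{n\geq N\}$ vanishes is precisely what (C2)+(C3) encode, so you would be proving the hard step twice, the first time non-rigorously. Second, and more seriously, the closing claim that the Doob $h$-transform of Section~2 is ``again the decisive tool'' is wrong in this regime. The $h$-transform construction of Proposition \ref{h process} requires the small-killing condition $\sum_u c_u W(0,u)<\infty$ so that $h(x)=\mathbb{P}_x[T_0<T_\partial]$ is bounded below (Proposition \ref{small killing} shows this is equivalent to $\lim_x\mathbb{P}_x[T_0<T_\partial]>0$), and it also presumes $\mathbb{P}_i[T_0^{(Y)}<\infty]=1$, neither of which is assumed in Theorem \ref{large_killing}; under large killing $h$ typically tends to $0$ and the transform degenerates. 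The paper's proof makes no use of the $h$-transform here. (A small slip besides: for a single death process $q_{n,0}=0$ for $n\geq 2$, so your generator identity should read $L\mathbf{1}(n)=-c_n$ for $n\geq 2$; this does not affect the argument.)
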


The remaining part of this paper proceeds as follows. In Section 2, we study the potential theory of single death processes with killing. In Section 3, we revisit the result in the QSDs of single death processes without killing. In Section 4, we prove the main results.

\section{Potential theory for single death processes with killing}

In this section, the potential theory applicable to the single death process with killing is analyzed in order to derive an explicit representation of the harmonic function relevant to it. The exit problem for Markov processes has attracted significant attention in a variety of applied fields, including but not limited to mathematical finance, queuing theory, biology and physics. We generalize the exit identity to a $\omega$-killed version, for which previous results are special cases, and derive the exit identities by using the Poisson's equation established in \cite[Proposition 2.7]{CZ14}. 

Recall that $X$ is a single death process with killing and $Y$ is the non-killing process corresponding to $X$. Assume that $Y$ is unique. We can give a construction of $X$ by using $Y$ and $c=(c_n)_{n\geq 1}$. Define
\begin{equation*}
k_t=\int_{0}^{t} c(Y_s) \d s.
\end{equation*}
Let $\mathcal{E}$ be an exponential r.v. of parameter 1 independent of $Y$. Define
\begin{equation*}
T_\partial=\inf\{t\geq 0: k_t\geq \mathcal{E}\}.
\end{equation*}
The process $X$ is defined as
\begin{equation*}
X_t=
\begin{cases}
	Y_t &\text{ if } t< T_\partial\\
	\partial & \text{ otherwise }.
\end{cases}
\end{equation*}
Then the transition function for $X$ is 
\begin{equation*}
\mathbb{P}_i[X_t=j]=\mathbb{P}_i[Y_t=j,t<T_\partial]=\mathbb{E}_i\[\mathbf{1}_{[Y_t=j]}\exp\(-\int_{0}^{t} c(Y_s) \d s\)\], \qquad i,j\geq 0.
\end{equation*}
According to \cite[Section 5]{FP99}, it is the single death process with killing with transition rate matrix $Q$. It should be mentioned here that if $T_0^{(Y)}<T_\partial$, then $X$ is absorbed at $0$, and if $T_0^{(Y)}\geq T_\partial$, then $X$ is absorbed at $\partial$. According to this construction, we obtain that if the process $Y$ satisfies $\mathbb{P}_i[T_0^{(Y)}<\infty]=1$ for any $i\geq 1$, then the process $X$ satisfies $\mathbb{P}_i[T_0\wedge T_\partial <\infty]=1$ for any $i\geq 1$.

The following definitions generalize the notations used in the first section, which are intended to explicitly represent the results in potential theory.
Let $\omega=(\omega_i)_{i\geq 1}$ be a non-negative function on $E_+$. Define
\begin{equation}\label{G_omega}
G_n^{(n)}(\omega)=1,\quad n\geq 1, \qquad G_n^{(k)}(\omega)=\sum_{l=n+1}^{k}\frac{q_n^{(l)}(\omega)G_l^{(k)}(\omega)}{q_{n,n-1}}, \quad k>n\geq 1,
\end{equation}
where
\begin{equation*}
q_n^{(k)}(\omega)=\sum_{j=k}^{\infty}q_{nj}+\omega_n.
\end{equation*}
And for any $i,j\geq 0$,
\begin{equation}\label{W^omega}
W^{(\omega)}(i,j)=\sum_{k=i+1}^{j}\frac{G_k^{(j)}(\omega)}{q_{j,j-1}},
\end{equation}
where we use the convention that $\sum_{\emptyset}=0$.
And we define
\begin{equation*}
Z^{(\omega)}(i,j)=
\begin{cases}
	1+\sum_{i<u<j}\omega_u W^{(\omega)}(i,u) & \text{ if } i<j\\
	0 & \text{ if }i\geq j.
\end{cases}
\end{equation*}
According to the notations given in Section 1,  $G_n^{(k)}=G_n^{(k)}(0)$, $q_n^{(k)}=q_n^{(k)}(0)$ and $W(i,j)=W^{(0)}(i,j)$.

Our first lemma in this section is as follows. This lemma is obtained from \cite[Proposition 2.7]{CZ14}, which solves the Poisson equation for single death processes. We call a matrix $F$ is a upward triangle matrix if $F(i,j)=0$ for any $i\geq j$.

\begin{lem}\label{Poisson}
Let $\Omega=diag(0,\omega)$ denote the matrix which has $0$ as its first diagonal entry followed by the values of $\omega$ for the subsequent diagonal entries. Then $F=(W^{(\omega)}(i,j))_{i,j\geq 0}$ is the unique upward triangle matrix solution for 
\begin{equation}\label{W_solution}
	(Q^{(Y)}-\Omega)F(i,j)=\delta_{ij}, \qquad i\geq 1, j\geq 0.
\end{equation}
\end{lem}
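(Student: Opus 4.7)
The plan is to verify directly that $F(i,j) = W^{(\omega)}(i,j)$ solves \eqref{W_solution} and then to argue uniqueness from the upward triangle constraint together with the recursive structure of the equation. The guiding observation is that $Q^{(Y)} - \Omega$ is the generator obtained from $Y$ by superimposing a killing rate $\omega_n$ at each state $n\geq 1$, so \eqref{W_solution} is nothing but the Poisson equation for a single death process whose off-diagonal rates match those of $Y$ (hence those of $X$) and whose killing rate is $\omega$. The quantities $q_n^{(k)}(\omega)$, $G_n^{(k)}(\omega)$ and $W^{(\omega)}(i,j)$ are the natural $\omega$-killed analogues of those appearing in \cite[Proposition 2.7]{CZ14}, so at bottom we are adapting that result.

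We will fix $j\geq 0$ and verify the row equation $\sum_k (q^{(Y)}_{ik} - \omega_i\delta_{ik}) W^{(\omega)}(k,j) = \delta_{ij}$ for each $i\geq 1$, by cases. The cases $i>j$ and $i=j$ are immediate from upward triangularity: in the first every term vanishes, while in the second only $q_{j,j-1} W^{(\omega)}(j-1,j) = q_{j,j-1}\cdot G_j^{(j)}(\omega)/q_{j,j-1} = 1$ survives. The substantive case is $i<j$, where we rewrite the first term via the identity $W^{(\omega)}(i-1,j) - W^{(\omega)}(i,j) = G_i^{(j)}(\omega)/q_{j,j-1}$ and use $q_{i,i-1} + q_{ii}^{(Y)} - \omega_i = -q_i^{(i+1)}(\omega)$, reducing the row equation to
\begin{equation*}
q_i^{(i+1)}(\omega)\, W^{(\omega)}(i,j) - \sum_{k=i+1}^{j-1} q_{ik}\, W^{(\omega)}(k,j) = \frac{q_{i,i-1}\, G_i^{(j)}(\omega)}{q_{j,j-1}}.
\end{equation*}
Expanding each $W^{(\omega)}(k,j)$ by its definition and interchanging the order of summation collapses the left side to $\sum_{l=i+1}^{j} q_i^{(l)}(\omega) G_l^{(j)}(\omega)/q_{j,j-1}$, which by the recursion \eqref{G_omega} equals $q_{i,i-1} G_i^{(j)}(\omega)/q_{j,j-1}$, as required.

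For uniqueness, fix $j$ and suppose $\tilde F$ is any upward triangle matrix satisfying \eqref{W_solution}. Then $\tilde F(i,j)=0$ for $i\geq j$ by assumption, the row at $i=j$ forces $\tilde F(j-1,j) = 1/q_{j,j-1}$, and for $i<j$ the row equation becomes a first-order backward recursion determining $q_{i,i-1}\tilde F(i-1,j)$ in terms of $\tilde F(i,j),\tilde F(i+1,j),\ldots,\tilde F(j-1,j)$. Inducting downward in $i$ from $j-1$ yields $\tilde F=F$.

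The only genuinely delicate step is the index interchange in the reduction of the $i<j$ case, which requires careful tracking of the tail-sum identity $q_i^{(l)}(\omega) = q_i^{(i+1)}(\omega) - \sum_{k=i+1}^{l-1} q_{ik}$ in order to match the collapsed left-hand side against the recursive definition of $G_i^{(j)}(\omega)$; all remaining steps are formal bookkeeping.
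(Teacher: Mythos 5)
Your proof is correct, but it takes a different route from the paper's. The paper disposes of the lemma in three lines by observing that, for fixed $j$, the function $g(i)=F(i,j)$ satisfies the Poisson equation $(Q^{(Y)}-\Omega)g=\delta_{\cdot j}$ with $g$ vanishing on $\{j,j+1,\dots\}$, and then simply reads off the solution formula from \cite[Proposition 2.7]{CZ14} applied to the operator $Q^{(Y)}-\Omega$; existence and uniqueness are both inherited from that citation. You instead give a self-contained argument: a row-by-row verification that $W^{(\omega)}(\cdot,j)$ solves \eqref{W_solution} (your case analysis and the key reductions are all correct --- in particular $q_{i,i-1}+q^{(Y)}_{ii}-\omega_i=-q_i^{(i+1)}(\omega)$, the telescoping $W^{(\omega)}(i-1,j)-W^{(\omega)}(i,j)=G_i^{(j)}(\omega)/q_{j,j-1}$, and the summation interchange that collapses the left side to $\sum_{l=i+1}^{j}q_i^{(l)}(\omega)G_l^{(j)}(\omega)/q_{j,j-1}$, which is $q_{i,i-1}G_i^{(j)}(\omega)/q_{j,j-1}$ by \eqref{G_omega}), followed by a downward induction for uniqueness exploiting $q_{i,i-1}>0$ so that the row at $i$ determines $\tilde F(i-1,j)$ from the entries above it. In effect you reprove the special case of \cite[Proposition 2.7]{CZ14} that the paper invokes. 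What your version buys is independence from the external reference and an explicit verification of existence (the paper's proof, as written, only derives the form of a solution assuming one exists, leaving existence to the citation); what it costs is length. Both arguments are sound.
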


\begin{proof}
Assume $F$ is the upward triangle matrix solution for \eqref{W_solution}. Fix $j\geq 1$ and let $g(i)=F(i,j)$, $f(i)=\delta_{ij}$, so $g(j+k)=0$ for any $k\geq 0$. From \eqref{W_solution}, we have $(Q^{(Y)}-\Omega)g(i)=f(i)$ for any $1\leq i\leq j$. According to \cite[Proposition 2.7]{CZ14}, for any $0\leq i<j$,
\begin{equation*}
	g(i)=\sum_{k=i+1}^{j}\sum_{l=k+1}^{j} \frac{G_k^{(j)}(\omega)f(l)}{q_{j,j-1}}=\sum_{k=i+1}^{j}\frac{G_k^{(j)}(\omega)}{q_{j,j-1}}=W^{(\omega)}(i,j).
\end{equation*}
So we complete the proof of this lemma.
\end{proof}

Recall that for any $a\geq 0$, $T_a=\inf\{t\geq 0:X_t=a\}$ denotes the first hitting time of $a$ for $X$,  $T_{a+}=\inf\{t\geq 0:X_t\geq a\}$ denotes the first passage time above $a$ for $X$ and $T_\partial=\inf\{t\geq 0:X_t\notin E\}$ denotes the first exit time from $E$ for $X$.

The following theorem solves the two-sided exit problem of $X$. We prove this theorem by using the result on Poisson's equation and strong Markov property.

\begin{thm}\label{two side exit}
Let $X$ be a single death process with killing on $E$ corresponding to $Q=(q_{ij})_{i,j\geq 0}$ with killing rate $c_i=-\sum_{j\geq i-1}q_{ij}$ for $i\geq 1$, and there exists $i_0\geq 1$ such that $c_{i_0}>0$. Let $\omega=(\omega_i)_{i\geq 1}$ be a non-negative function on $E_+$.

(1) For any $0\leq a<i<N$,
\begin{equation}\label{time}
	\mathbb{E}_i\[ \exp\(-\int_{0}^{T_a} \omega(X_t) \d t \), T_a<T_{N+}\wedge T_\partial\]=\frac{W^{(\omega+c)}(i,N)}{W^{(\omega+c)}(a,N)}.
\end{equation}

(2)
For any $0\leq a<i,j<N$,
\begin{equation}\label{measure}
	\mathbb{E}_i \[ \int_{0}^{T_a\wedge T_{N_+}\wedge T_\partial} \exp \! \( \! -\int_{0}^{t}  \omega(X_s) \d s \! \) \! \mathbf{1}_{[X_t=j]} \d t \]\!=\!\frac{ W^{(\omega+c)}(a,j) W^{(\omega+c)}(i,N)}{W^{(\omega+c)}(a,N)}-W^{(\omega+c)}(i,j).
\end{equation}

(3)
Then for any $0\leq a<i<N$,
\begin{equation}\label{time1}
	\begin{aligned}
		&\mathbb{E}_i\[ \exp\(-\int_{0}^{T_{N+}\wedge T_\partial} \omega(X_t) \d t\), T_{N+}\wedge T_\partial<T_a \]\\
		&=1+\sum_{i<j<N} \omega_j W^{(\omega+c)}(i,j)-\frac{W^{(\omega+c)}(i,N)}{W^{(\omega+c)}(a,N)}\( 1+\sum_{a<j<N}\omega_j W^{(\omega+c)}(a,j) \).
	\end{aligned}
\end{equation}
\end{thm}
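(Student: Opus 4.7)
The unifying idea is to convert every $\omega$-killed functional of $X$ into an $(\omega+c)$-killed functional of the companion process $Y$, via the construction described at the beginning of this section. Conditionally on the whole trajectory of $Y$, the probability that the independent exponential clock $\mathcal{E}$ has not rung by time $t$ equals $\exp(-\int_0^t c(Y_s)\,\d s)$, so for any $Y$-stopping time $\sigma$ and any $Y$-measurable event $A$,
\begin{align*}
\mathbb{E}_i\!\left[e^{-\int_0^\sigma \omega(X_s)\,\d s};\,\sigma<T_\partial,\,A\right]=\mathbb{E}_i\!\left[e^{-\int_0^{\sigma}(\omega+c)(Y_s)\,\d s};\,A\right],
\end{align*}
while events requiring $T_\partial$ to occur first will contribute the killing intensity $c$ integrated against the $(\omega+c)$-discounted occupation density of $Y$. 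With this bookkeeping fixed, parts (1)--(3) reduce to Dirichlet problems for the operator $Q^{(Y)}-(\omega+c)I$ on the window $\{a+1,\ldots,N-1\}$, whose candidate solutions come from Lemma \ref{Poisson}.

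For part (1), I would set $h(i):=W^{(\omega+c)}(i,N)/W^{(\omega+c)}(a,N)$. Lemma \ref{Poisson} applied with killing $\omega+c$ gives $(Q^{(Y)}-(\omega+c)I)W^{(\omega+c)}(\cdot,N)(i)=\delta_{iN}$ for all $i\geq 1$, so $h$ is harmonic for $Q^{(Y)}-(\omega+c)I$ on $a<i<N$; the boundary values $h(a)=1$ and $h(j)=0$ for $j\geq N$ hold automatically since the defining sum in \eqref{W^omega} is empty when $i\geq N$. Optional sampling on the bounded multiplicative martingale $M_t=\exp(-\int_0^t(\omega+c)(Y_s)\,\d s)\,h(Y_t)$ at $\tau:=T_a^{(Y)}\wedge T_{N+}^{(Y)}$, which is a.s.\ finite because $\mathbb{P}_i[T_0^{(Y)}<\infty]=1$ and $Y$ is downward skip-free, then equates $h(i)$ with $\mathbb{E}_i[\exp(-\int_0^{T_a^{(Y)}}(\omega+c)(Y_s)\,\d s);T_a^{(Y)}<T_{N+}^{(Y)}]$, which is the LHS of \eqref{time} after the translation above. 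For part (2), denoting by $U(i,j)$ the LHS of \eqref{measure}, a standard Feynman--Kac argument shows $U(\cdot,j)$ solves $(Q^{(Y)}-(\omega+c)I)U(\cdot,j)(i)=-\delta_{ij}$ on $a<i<N$ with $U(a,j)=0$ and $U(i,j)=0$ for $i\geq N$. The RHS $V(i,j)$ of \eqref{measure} solves the same problem: $-W^{(\omega+c)}(\cdot,j)$ is the particular solution supplied directly by Lemma \ref{Poisson}, while $W^{(\omega+c)}(\cdot,N)$ is the harmonic piece from part (1); the boundary conditions collapse because $W^{(\omega+c)}(i,j)=0$ whenever $i\geq j$. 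Uniqueness of the Dirichlet problem yields $U=V$.

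For part (3), split $\{T_{N+}\wedge T_\partial<T_a\}$ according to whether $T_{N+}<T_\partial$ or $T_\partial<T_{N+}$. Translating to $Y$, the first event contributes $\mathbb{E}_i[\exp(-\int_0^{T_{N+}^{(Y)}}(\omega+c)(Y_s)\,\d s);T_{N+}^{(Y)}<T_a^{(Y)}]$ while the second contributes $\sum_{a<j<N}c_j\,U(i,j)$ by the killing-intensity bookkeeping. Integrating the identity $\tfrac{\d}{\d s}\exp(-\int_0^s(\omega+c)(Y_u)\,\d u)=-(\omega+c)(Y_s)\exp(-\int_0^s(\omega+c)(Y_u)\,\d u)$ over $[0,\tau]$, taking expectations, and inserting part (1) for the $\{T_a^{(Y)}<T_{N+}^{(Y)}\}$ Laplace transform gives
\begin{align*}
\mathbb{E}_i\!\left[e^{-\int_0^{T_{N+}^{(Y)}}(\omega+c)(Y_s)\,\d s};\,T_{N+}^{(Y)}<T_a^{(Y)}\right]=1-\frac{W^{(\omega+c)}(i,N)}{W^{(\omega+c)}(a,N)}-\sum_{a<j<N}(\omega_j+c_j)U(i,j).
\end{align*}
Adding the two contributions cancels the $c_jU(i,j)$ term, leaving $1-W^{(\omega+c)}(i,N)/W^{(\omega+c)}(a,N)-\sum_{a<j<N}\omega_j U(i,j)$; substituting the formula for $U(i,j)$ from part (2) and using $W^{(\omega+c)}(i,j)=0$ for $j\leq i$ to truncate the resulting sum over $j$ to $i<j<N$ then produces \eqref{time1}.

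The main obstacle I anticipate is the handling of the infinite upper ``boundary'' $\{j\geq N\}$: since $X$ is only downward skip-free, exit from $[a,N-1]$ occurs via a single jump to an arbitrary state $\geq N$, and one must check both that the candidate harmonic function $W^{(\omega+c)}(\cdot,N)$ genuinely vanishes on this entire infinite boundary (guaranteed by the empty-sum convention in \eqref{W^omega}) and that optional sampling on $M_t$ at $\tau$ is justified despite unbounded upward jump sizes. A secondary subtlety is the algebraic pass at the end of part (3), where the factors $W^{(\omega+c)}(a,j)$ arising from $U(i,j)$ must be paired with the prefactor $W^{(\omega+c)}(i,N)/W^{(\omega+c)}(a,N)$ rather than with the $W^{(\omega+c)}(i,j)$ terms, and where one must carefully exploit $W^{(\omega+c)}(i,j)=0$ for $j\leq i$ to reduce the summation range to $i<j<N$.
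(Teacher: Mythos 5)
Your argument is correct and reaches all three identities, but it travels a noticeably different road from the paper's for parts (1) and (2). The paper works directly with $X$ and \emph{derives} the answer: it uses the strong Markov property and downward skip-freeness to show that the one-step downward Laplace transforms have a multiplicative ratio structure, extracts from a first-jump decomposition the recursion $(Q^{(Y)}-(\Omega+C))M(\cdot,j)=\delta_{\cdot j}$ for the reciprocals $M(i,j)$, and only then invokes the uniqueness in Lemma \ref{Poisson} to identify $M$ with $W^{(\omega+c)}$; part (2) is then an explicit strong-Markov/renewal computation of the Green's function at the diagonal. You instead make the Feynman--Kac dictionary between $X$ and the $(\omega+c)$-killed $Y$ explicit at the outset and then \emph{verify} that the announced right-hand sides solve the relevant Dirichlet and resolvent problems on the finite window, concluding by uniqueness via optional sampling of a bounded martingale. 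Both routes rest on the same two pillars --- Lemma \ref{Poisson} and, for part (3), the identity $\int_0^\tau \omega\,e^{-\int_0^t\omega}\,\d t=1-e^{-\int_0^\tau\omega}$ combined with parts (1)--(2) --- so the mathematical content is the same; your version buys a shorter, guess-and-check verification (and makes transparent why $\omega$ and $c$ always appear through the sum $\omega+c$), while the paper's buys a self-contained derivation that does not presuppose the form of the answer and that produces the intermediate formula for the diagonal Green's function as a by-product. Your handling of the infinite upper boundary $\{j\geq N\}$ and of the cancellation of the $c_jU(i,j)$ terms in part (3) is exactly right.

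One small inaccuracy: you justify $T_a^{(Y)}\wedge T_{N+}^{(Y)}<\infty$ a.s.\ by citing $\mathbb{P}_i[T_0^{(Y)}<\infty]=1$, which is not among the hypotheses of this theorem (it appears only in later statements). No such assumption is needed here: before exiting, the process lives in the finite window $\{a+1,\dots,N-1\}$, where each state has total jump rate at least $q_{k,k-1}>0$ and the direct descending path to $a$ has positive probability, so the exit time from the window is a.s.\ finite (indeed has exponential moments). With that substitution your optional-sampling and uniqueness steps are fully justified.
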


\begin{proof}
(1) Firstly, we prove that for any $0<j<\infty$ and any $1\leq k\leq j$,
\begin{equation}\label{W_omega}
	W^{(\omega+c)}(j-k,j)=\frac{1}{q_{j,j-1}\mathbb{E}_{j-1}\[ \exp\(-\int_{0}^{T_{j-k}}\omega(X_t)\d t\), T_{j-k}<T_{j+}\wedge T_\partial \]}.
\end{equation}
We denote the right hand of the above equation by $M(j-k,j)$. By using strong Markov property and the skip-free property of $Y$, it follows that for any $1\leq l\leq k$,
\begin{align*}
	&\mathbb{E}_{j-1}\[ \exp\(-\int_{0}^{T_{j-k}}\omega(X_t)\d t\), T_{j-k}<T_{j+}\wedge T_\partial \]\\
	&=\mathbb{E}_{j-1}\[ \exp\(-\int_{0}^{T_{j-l}}\omega(X_t)\d t\), T_{j-l}<T_{j+}\wedge T_\partial \] \\
	&\qquad \times \mathbb{E}_{j-l}\[ \exp\(-\int_{0}^{T_{j-k}}\omega(X_t)\d t\), T_{j-k}<T_{j+}\wedge T_\partial \].
\end{align*} 
So
\begin{equation}\label{time_2}
	\mathbb{E}_{j-l}\[ \exp\(-\int_{0}^{T_{j-k}}\omega(X_t)\d t\), T_{j-k}<T_{j+}\wedge T_\partial \]=\frac{M(j-l,j)}{M(j-k,j)}.
\end{equation}
Using strong Markov property and skip-free property again, for $2\leq k\leq j-1$, we have
\begin{align*}
	&\mathbb{E}_{j-k+1}\[ \exp\(-\int_{0}^{T_{j-k}}\omega(X_t)\d t\), T_{j-k}<T_{j+}\wedge T_\partial \]\\
	&=\sum_{j-k+2\leq u\leq j-1} \frac{q_{j-k+1,u}}{q_{j-k+1}+\omega_{j-k+1}} \mathbb{E}_{u}\[ \exp\(-\int_{0}^{T_{j-k}}\omega(X_t)\d t\), T_{j-k}<T_{j+}\wedge T_\partial \] \\
	&\qquad +\frac{q_{j-k+1,j-k}}{q_{j-k+1}+\omega_{j-k+1}}.
\end{align*}
Combining with \eqref{time_2}, we get
\begin{equation}\label{W_omega3}
	(q_{j-k+1}+\omega_{j-k+1})M(j-k+1,j)=q_{j-k+1,j-k}M(j-k,j)+\sum_{j-k+2\leq u\leq j-1}q_{j-k+1,u}M(u,j). 
\end{equation}
By noting that $M(j-1,j)=\frac{1}{q_{j,j-1}}$.
Let $\Omega=\text{diag}(0,\omega)$ and $C=\text{diag}(0,c)$, and define $M(i,j)=0$ for any $i\leq j$. Obviously, $Q=Q^{(Y)}-C$. Then we have for $i\geq 1$ and $j\geq 0$,
\begin{equation*}
	(Q^{(Y)}-(\Omega+C))M(i,j)=\delta_{ij}.
\end{equation*}
According to Lemma \ref{Poisson}, we obtain the unique solution of the above equation, which shows that $M(j-k,j)=W^{(\omega+c)}(j-k,j)$. By using \eqref{time_2} again, we have \eqref{time}.

(2) Secondly, we prove \eqref{measure}. By using strong Markov property, we have that for any $0\leq a<i,j<N$,
\begin{align}\label{markov1}
	&\mathbb{E}_i\[ \int_{0}^{T_a\wedge T_{N+}\wedge T_\partial} \exp\(- \int_{0}^{t} \omega(X_s) \d s \)\mathbf{1}_{[X_t=j]}  \d t\] \nonumber \\
	&=\mathbb{E}_i \[ \exp\( -\int_{0}^{T_j} \omega(X_t)\d t  \), T_j<T_a\wedge T_{N+}\wedge T_\partial \] \nonumber \\
	&\qquad \times \mathbb{E}_j\[ \int_{0}^{T_a\wedge T_{N+}\wedge T_\partial} \exp\(- \int_{0}^{t} \omega(X_s) \d s \)\mathbf{1}_{[X_t=j]}  \d t\].
\end{align}

For any $0\leq a<j<i<N$,
\begin{align}\label{markov2}
	&\mathbb{E}_i \[ \exp\( -\int_{0}^{T_j} \omega(X_t)\d t  \), T_j<T_a\wedge T_{N+}\wedge T_\partial \] \nonumber\\
	&=\mathbb{E}_i \[ \exp\( -\int_{0}^{T_j} \omega(X_t)\d t  \), T_j<T_{N+}\wedge T_\partial \] \nonumber\\
	&=\frac{W^{(\omega+c)}(i,N)}{W^{(\omega+c)}(j,N)}.
\end{align}
And for any $a<i<j<N$,
\begin{align}\label{markov3}
	&\mathbb{E}_i \[ \exp\( -\int_{0}^{T_j} \omega(X_t)\d t  \), T_j<T_a\wedge T_{N+}\wedge T_\partial \] \nonumber \\
	&=\frac{\mathbb{E}_i\[ \exp\( -\int_{0}^{T_a} \omega(X_t) \d t \), T_j<T_a<T_{N+}\wedge T_\partial \]}{\mathbb{E}_j \[ \exp\( -\int_{0}^{T_a} \omega(X_t)\d t  \), T_a<T_{N+}\wedge T_\partial \]} \nonumber \\
	&=\frac{
		\mathbb{E}_i\[ \exp\( -\int_{0}^{T_a} \omega(X_t) \d t \), T_a<T_{N+}\wedge T_\partial \]-\mathbb{E}_i\[ \exp\( -\int_{0}^{T_a} \omega(X_t) \d t \), T_a<T_{j+}\wedge T_\partial  \]}{\mathbb{E}_j \[ \exp\( -\int_{0}^{T_a} \omega(X_t)\d t  \), T_a<T_{N+}\wedge T_\partial \]}\nonumber\\
	&=\frac{W^{(\omega+c)}(i,N)}{W^{(\omega+c)}(j,N)}-\frac{W^{(\omega+c)}(i,j)W^{(\omega+c)}(a,N)}{W^{(\omega+c)}(a,j)W^{(\omega+c)}(j,N)},
\end{align}
where the last equality comes from \eqref{time}.

So we just need to calculate 
\begin{equation*}
	\mathbb{E}_{j-l}\[ \int_{0}^{T_{j-k}\wedge T_{j+}\wedge T_\partial} \exp\(- \int_{0}^{t} \omega(X_s) \d s \)\mathbf{1}_{[X_t=j-l]}  \d t\]
\end{equation*}
for any $0<l<k\leq j$.

By using Markov property and \eqref{markov1},\eqref{markov2} and \eqref{markov3}, we obtain
\begin{align*}
	&\mathbb{E}_{j-l}\[ \int_{0}^{T_{j-k}\wedge T_{j+}\wedge T_\partial} \exp\(- \int_{0}^{t} \omega(X_s) \d s \)\mathbf{1}_{[X_t=j-l]}  \d t\]\\
	&=\frac{1}{q_{j-l}+\omega_{j-l}}+\mathbb{E}_{j-l}\[ \int_{0}^{T_{j-k}\wedge T_{j+}\wedge T_\partial} \exp\(- \int_{0}^{t} \omega(X_s) \d s \)\mathbf{1}_{[X_t=j-l]}  \d t\] \\
	&\qquad \times \left( \sum_{j-l+1 \leq u\leq j-1} \frac{q_{j-l,u}W^{(\omega+c)}(u,j)}{(q_{j-l}+\omega_{j-l})W^{(\omega+c)}(j-l,j)}\right.\\
	&\qquad \quad \left.+\frac{q_{j-l,j-l-1}}{q_{j-l}+\omega_{j-l}} \! \( \! \frac{W^{(\omega+c)}(j-l-1,j)}{W^{(\omega+c)}(j-l,j)}-\frac{W^{(\omega+c)}(j-l-1,j-l)W^{(\omega+c)}(j-k,j)}{W^{(\omega+c)}(j-k,j-l)W^{(\omega+c)}(j-l,j)} \! \) \! \right).
\end{align*}
By using \eqref{W_omega3}, we get that
\begin{equation*}
	\mathbb{E}_{j-l}\[ \int_{0}^{T_{j-k}\wedge T_{j+}\wedge T_\partial} \! \exp\(\!- \int_{0}^{t} \omega(X_s) \d s\! \) \!\mathbf{1}_{[X_t=j-l]}  \d t\]\!=\!\frac{W^{(\omega+c)}(j-k,j-l)W^{(\omega+c)}(j-l,j)}{W^{(\omega+c)}(j-k,j)}.
\end{equation*}
And \eqref{measure} is proved by using \eqref{markov1},\eqref{markov2} and \eqref{markov3}.

(3) Finally, we prove \eqref{time1}. On the one hand, according to \eqref{measure}, it follows that
\begin{align*}
	&\mathbb{E}_i\[ \int_{0}^{T_a\wedge T_{N+}\wedge T_\partial} \exp\(- \int_{0}^{t} \omega(X_s) \d s \)\omega(X_t)  \d t\]\\
	&=\frac{W^{(\omega+c)}(i,N)}{W^{(\omega+c)}(a,N)}\sum_{a<j<N} \omega_j W^{(\omega+c)}(a,j)
	-\sum_{i<j<N} \omega_j W^{(\omega+c)}(i,j).
\end{align*}
On the other hand, by using \eqref{time}
\begin{align*}
	&\mathbb{E}_i\[ \int_{0}^{T_a\wedge T_{N+}\wedge T_\partial} \exp\(- \int_{0}^{t} \omega(X_s) \d s \)\omega(X_t)  \d t\]\\
	&=1-\mathbb{E}_i\[\exp\(-\int_{0}^{T_a\wedge T_{N+}\wedge T_\partial} \omega(X_t) \d t \)\]\\
	&=1-\frac{W^{(\omega+c)}(i,N)}{W^{(\omega+c)}(a,N)}-\mathbb{E}_i\[\exp\(-\int_{0}^{T_{N+}\wedge T_\partial} \omega(X_t) \d t \),T_{N+}\wedge T_\partial<T_a\].
\end{align*}
We complete the proof by combining the above two equations.
\end{proof}

Recall that $Y=(Y_t)_{t\geq 0}$, the non-killing process corresponding to $X$, is a single death process without killing on $E$ with transition rate matrix $Q^{(Y)}=(q_{ij}^{(Y)})_{i,j\geq 0}$ satisfying $q_{ij}^{(Y)}=q_{ij}$ for any $i\neq j$. Recall that for any $a\geq 0$, $T_a^{(Y)}=\inf\{t\geq 0:Y_t=a\}$ denotes the first hitting time of a for $Y$ and $T_{a+}^{(Y)}=\inf\{t\geq 0:Y_t\geq a\}$ denotes the first passage time above $a$ for $Y$. The following theorem gives the solution to the two-side exit problem for $Y$. Employing the analogous technique as in the proof of Theorem \ref{two side exit}, we deduce the following corollary. Given the similarity in approach, we choose to omit the proof for brevity.

\begin{cor}
Let $Y=(Y_t)_{t\geq 0}$ be a single death process on $E$ corresponding to a conservative transition rate matrix $Q^{(Y)}=(q_{ij}^{(Y)})_{i,j\geq 0}$ satisfying $q_{ij}^{(Y)}=q_{ij}$ for any $i\neq j$, and $q_i^{(Y)}=\sum_{j\neq i}q_{ij}$. Let $\omega=(\omega_i)_{i\geq 1}$ be a non-negative function.

(1) 
For any $0\leq a<i<N$,
\begin{equation}\label{time2}
	\mathbb{E}_i\[ \exp\(-\int_{0}^{T_a^{(Y)}} \omega(Y_t) \d t \), T_a^{(Y)}<T_{N+}^{(Y)} \]=\frac{W^{(\omega)}(i,N)}{W^{(\omega)}(a,N)}.
\end{equation}

(2)
For any $0\leq a<i,j<N$,
\begin{equation}\label{measure2}
	\mathbb{E}_i \[ \int_{0}^{T_a^{(Y)}\wedge T_{N+}^{(Y)}} \exp\( -\int_{0}^{t}  \omega(Y_s) \d s \) \mathbf{1}_{[Y_t=j]} \d t \]=\frac{ W^{(\omega)}(a,j) W^{(\omega)}(i,N)}{W^{(\omega)}(a,N)}-W^{(\omega)}(i,j).
\end{equation}

(3)
For any $0\leq a<i<N$,
\begin{equation}\label{time4}
	\mathbb{E}_i\[ \exp\(-\int_{0}^{T_{N+}^{(Y)}} \omega(Y_t) \d t\), T_{N+}^{(Y)}<T_a^{(Y)} \]=Z^{(\omega)}(i,N)-\frac{W^{(\omega)}(i,N)Z^{(\omega)}(a,N)}{W^{(\omega)}(a,N)}.
\end{equation}
\end{cor}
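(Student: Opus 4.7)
The plan is to mirror the proof of Theorem \ref{two side exit} line by line, replacing the killed process $X$ by its conservative companion $Y$. Since $Y$ has no killing, every occurrence of the weight $\omega+c$ in that argument becomes $\omega$, every $T_\partial$ drops out, and Lemma \ref{Poisson} applies verbatim (it is already stated in terms of $Q^{(Y)}$, not $Q$).

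For part (1), I would set
\[
M(j-k,j) := \Bigl(q_{j,j-1}\,\mathbb{E}_{j-1}\bigl[\exp\bigl(-\textstyle\int_0^{T_{j-k}^{(Y)}}\omega(Y_t)\,\d t\bigr);\,T_{j-k}^{(Y)}<T_{j+}^{(Y)}\bigr]\Bigr)^{-1}
\]
for $1\le k\le j$ and $M(i,j)=0$ for $i\ge j$. The downward skip-free property of $Y$ together with the strong Markov property at $T_{j-l}^{(Y)}$ yields the multiplicative identity
\[
\mathbb{E}_{j-l}\bigl[\exp\bigl(-\textstyle\int_0^{T_{j-k}^{(Y)}}\omega(Y_t)\,\d t\bigr);\,T_{j-k}^{(Y)}<T_{j+}^{(Y)}\bigr]=\frac{M(j-l,j)}{M(j-k,j)},
\]
and a one-step first-jump analysis out of state $j-k+1$ gives the recursion
\[
(q_{j-k+1}^{(Y)}+\omega_{j-k+1})M(j-k+1,j) = q_{j-k+1,j-k}M(j-k,j)+\sum_{u=j-k+2}^{j-1}q_{j-k+1,u}M(u,j),
\]
with boundary value $M(j-1,j)=1/q_{j,j-1}$. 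Together these say that $M$ is the unique upward triangular solution of $(Q^{(Y)}-\Omega)F=I$ with $\Omega=\mathrm{diag}(0,\omega)$, so Lemma \ref{Poisson} identifies $M(i,j)=W^{(\omega)}(i,j)$, establishing \eqref{time2}.

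For part (2), the strong Markov property at $T_j^{(Y)}$ factors the left-hand side of \eqref{measure2} as a discounted hitting probability of $j$ times a mean discounted occupation at $j$ starting from $j$, exactly as in \eqref{markov1}--\eqref{markov3}. When $j<i$ the first factor is given directly by (1); when $j>i$ one applies (1) twice in an inclusion--exclusion between $\{T_a^{(Y)}<T_{N+}^{(Y)}\}$ and $\{T_a^{(Y)}<T_{j+}^{(Y)}\}$. The occupation-at-$j$ factor comes from a first-jump decomposition at $j$ and is collapsed using the $W^{(\omega)}$-recursion already derived. For part (3), I would start from
\[
1 = \mathbb{E}_i\Bigl[\exp\Bigl(-\int_0^{T_a^{(Y)}\wedge T_{N+}^{(Y)}}\omega(Y_t)\,\d t\Bigr)\Bigr]+\mathbb{E}_i\Bigl[\int_0^{T_a^{(Y)}\wedge T_{N+}^{(Y)}}e^{-\int_0^t\omega(Y_s)\,\d s}\omega(Y_t)\,\d t\Bigr],
\]
evaluate the integral term by summing \eqref{measure2} against $\omega_j$ over $a<j<N$, split the exponential term on $\{T_a^{(Y)}<T_{N+}^{(Y)}\}$ and its complement (applying (1) to the first piece), and solve for the desired expectation; the sums $1+\sum_{i<j<N}\omega_j W^{(\omega)}(i,j)$ and $1+\sum_{a<j<N}\omega_j W^{(\omega)}(a,j)$ that emerge are precisely $Z^{(\omega)}(i,N)$ and $Z^{(\omega)}(a,N)$.

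The main obstacle is the middle step: collapsing the first-jump expression for the mean discounted occupation at $j$ from $j$ into the clean ratio $W^{(\omega)}(a,j)W^{(\omega)}(i,N)/W^{(\omega)}(a,N)-W^{(\omega)}(i,j)$ requires the same careful algebraic manipulation of the $W^{(\omega)}$-recursion as in the proof of Theorem \ref{two side exit}. All other steps are term-by-term specializations of that theorem's proof with $c\equiv 0$, which is why the authors are justified in omitting the details.
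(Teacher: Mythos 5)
Your proposal is correct and is exactly the argument the paper intends: the authors explicitly omit the proof because it is the proof of Theorem \ref{two side exit} with the killing removed (every $\omega+c$ replaced by $\omega$ and every $T_\partial$ dropped), which is precisely the line-by-line specialization you carry out. No further comment is needed.
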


The following theorem provides the exit identities for the one-sided exit problems associated with $Y$.

\begin{thm}\label{downward integral}
If the single death process $Y$ satisfies that for any $i\geq 1$, $\mathbb{P}_i[T_0^{(Y)}<\infty]=1$, then for any $i>a$,
\begin{equation}\label{limit}
	\mathbb{E}_i\[ \exp\(-\int_{0}^{T_a^{(Y)}} \omega(Y_t) \d t \)\] =\lim_{N\rightarrow\infty} \frac{W^{(\omega)}(i,N)}{W^{(\omega)}(a,N)}=\lim_{N\rightarrow\infty}\frac{Z^{(\omega)}(i,N)}{Z^{(\omega)}(a,N)}.
\end{equation}
If
\begin{equation*}
	C=\sum_{u=1}^{\infty} \omega_u W(0,u)<\infty,
\end{equation*}
then for any $i\geq 0$
\begin{equation}\label{Z_omega}
	Z^{(\omega)}(i,\infty):=1+\sum_{u=i+1}^{\infty}\omega_u W^{(\omega)}(i,u)<\infty,
\end{equation}
and
\begin{equation*}
	\mathbb{E}_i\[ \exp\(-\int_{0}^{T_a^{(Y)}} \omega(Y_t) \d t \)\] =\frac{Z^{(\omega)}(i,\infty)}{Z^{(\omega)}(a,\infty)}.
\end{equation*}
\end{thm}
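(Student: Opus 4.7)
The plan is to feed the two-sided exit identities \eqref{time2} and \eqref{time4} into a monotone convergence argument based on $\mathbb{P}_i[T_0^{(Y)}<\infty]=1$, and then upgrade to the series formula under $C<\infty$ via a uniform ratio comparison of $W^{(\omega)}(0,u)$ against $W(0,u)$.

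For \eqref{limit}, the hypothesis together with the skip-free downward structure of $Y$ forces $T_a^{(Y)}<\infty$ a.s.\ and rules out explosion before $T_a^{(Y)}$, so $\sup_{t\leq T_a^{(Y)}}Y_t<\infty$ almost surely and the events $\{T_a^{(Y)}<T_{N+}^{(Y)}\}$ increase to an event of full probability. Dominated convergence applied to \eqref{time2} yields the first equality of \eqref{limit}. Dually, from \eqref{time4},
\begin{equation*}
	\beta_N:=Z^{(\omega)}(i,N)-\frac{W^{(\omega)}(i,N)Z^{(\omega)}(a,N)}{W^{(\omega)}(a,N)}=\mathbb{E}_i\l[\e^{-\int_0^{T_{N+}^{(Y)}}\omega(Y_t)\,\d t},\,T_{N+}^{(Y)}<T_a^{(Y)}\r]\longrightarrow 0,
\end{equation*}
since $\mathbb{P}_i[T_{N+}^{(Y)}<T_a^{(Y)}]\to 0$. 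Because $Z^{(\omega)}(a,N)\geq 1$, dividing by $Z^{(\omega)}(a,N)$ gives $|Z^{(\omega)}(i,N)/Z^{(\omega)}(a,N)-W^{(\omega)}(i,N)/W^{(\omega)}(a,N)|\leq \beta_N\to 0$, which supplies the second equality.

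Now assume $C<\infty$ and write $L_\omega(i):=\mathbb{E}_i[\e^{-\int_0^{T_0^{(Y)}}\omega(Y_t)\,\d t}]$. Taking $\omega=0$ in \eqref{measure2}, letting $N\to\infty$, and using $W(i,N)/W(0,N)\to 1$ (the $\omega=0$ case of the first part) produces the Green identity $\mathbb{E}_i[\int_0^{T_0^{(Y)}}\mathbf{1}_{[Y_t=u]}\,\d t]=W(0,u)-W(i,u)$; summing against $\omega$ gives $\mathbb{E}_i[\int_0^{T_0^{(Y)}}\omega(Y_s)\,\d s]\leq C$ and, by Jensen, $L_\omega(i)\geq \e^{-C}$ uniformly in $i$. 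Specializing \eqref{W_omega} to $c=0$ and $j-k=0$ yields $W^{(\omega)}(0,j)=(q_{j,j-1}\phi_j)^{-1}$ and $W(0,j)=(q_{j,j-1}\psi_j)^{-1}$ with
\begin{equation*}
	\phi_j:=\mathbb{E}_{j-1}\l[\e^{-\int_0^{T_0^{(Y)}}\omega(Y_t)\,\d t},\,T_0^{(Y)}<T_{j+}^{(Y)}\r],\qquad \psi_j:=\mathbb{P}_{j-1}[T_0^{(Y)}<T_{j+}^{(Y)}].
\end{equation*}
Splitting off the complementary event $\{T_{j+}^{(Y)}\leq T_0^{(Y)}\}$ and bounding the resulting contribution above by $1-\psi_j$ yields $\phi_j\geq L_\omega(j-1)-(1-\psi_j)\geq \e^{-C}-(1-\psi_j)$; since $\psi_j\to 1$ by hypothesis, this forces $\sup_{j\geq 1}W^{(\omega)}(0,j)/W(0,j)<\infty$. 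Hence $\sum_u \omega_u W^{(\omega)}(0,u)<\infty$, which is \eqref{Z_omega} at $i=0$, and the monotonicity $W^{(\omega)}(i,u)\leq W^{(\omega)}(0,u)$ visible from \eqref{W^omega} delivers it for all $i\geq 0$. Monotone convergence $Z^{(\omega)}(i,N)\uparrow Z^{(\omega)}(i,\infty)<\infty$ then lets me pass to the limit in the second formula of \eqref{limit} to obtain $\mathbb{E}_i[\e^{-\int_0^{T_a^{(Y)}}\omega(Y_t)\,\d t}]=Z^{(\omega)}(i,\infty)/Z^{(\omega)}(a,\infty)$.

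The main obstacle is the finiteness assertion \eqref{Z_omega}. The naive comparison runs the wrong way: \eqref{G_omega} and \eqref{W^omega} give $W^{(\omega)}\geq W$, so one must show that $\omega$-discounting degrades the downward-hitting probability $\psi_j$ only by a bounded multiplicative factor. This is the content of the bound $\phi_j\geq \e^{-C}-(1-\psi_j)$, which couples the uniform lower bound $L_\omega\geq \e^{-C}$ (from $C<\infty$ via Jensen) with $\psi_j\to 1$ (from $\mathbb{P}_i[T_0^{(Y)}<\infty]=1$).
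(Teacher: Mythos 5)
Your treatment of \eqref{limit} is correct and coincides with the paper's: the first equality is the $N\to\infty$ limit of \eqref{time2} (legitimate because $T_a^{(Y)}\leq T_0^{(Y)}<\infty$ a.s.\ and the process does not explode before $T_a^{(Y)}$), and the second follows from \eqref{time4} together with $Z^{(\omega)}(a,N)\geq 1$. The problem is your proof of \eqref{Z_omega}. The whole argument funnels through the claim that $\psi_j=\mathbb{P}_{j-1}[T_0^{(Y)}<T_{j+}^{(Y)}]\to 1$ ``by hypothesis,'' and this is false. The hypothesis $\mathbb{P}_i[T_0^{(Y)}<\infty]=1$ gives $\mathbb{P}_{i}[T_0^{(Y)}<T_{N+}^{(Y)}]\to 1$ as $N\to\infty$ \emph{for fixed} $i$; in $\psi_j$ the starting point $j-1$ and the barrier $j$ move together, which is a different limit. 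For the recurrent symmetric nearest-neighbour chain ($q_{j,j+1}=q_{j,j-1}=1$) one has $\mathbb{P}_i[T_0^{(Y)}<\infty]=1$ yet $\psi_j=1/j\to 0$, and one can still choose $\omega$ (e.g.\ $\omega_u=u^{-3}$, so that $C=\sum_u u^{-2}<\infty$ since $W(0,u)=u$) making the theorem's hypotheses hold. Your additive bound $\phi_j\geq e^{-C}-(1-\psi_j)$ is then vacuous for all large $j$ (the right-hand side is negative), so it yields no control on $W^{(\omega)}(0,j)/W(0,j)=\psi_j/\phi_j$, and \eqref{Z_omega} is not established.

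The inequality you actually need is multiplicative, $\phi_j\geq e^{-C}\psi_j$, i.e.\ $W^{(\omega)}(0,j)\leq e^{C}W(0,j)$. Your own ingredients almost give this: from \eqref{measure2} with $a=0$, $N=j$ one gets $\mathbb{E}_{j-1}\bigl[\int_0^{T_0^{(Y)}\wedge T_{j+}^{(Y)}}\omega(Y_s)\,\d s\bigr]\leq \psi_j\sum_u\omega_uW(0,u)=\psi_j C$, hence $\mathbb{E}_{j-1}\bigl[\int_0^{T_0^{(Y)}}\omega(Y_s)\,\d s\,\big|\,T_0^{(Y)}<T_{j+}^{(Y)}\bigr]\leq C$, and Jensen applied to the \emph{conditional} law gives $\phi_j\geq \psi_j e^{-C}$; that closes the gap and even extends to $W^{(\omega)}(i,j)\leq e^C W(i,j)$ for all $i$. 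The paper instead proves the same bound combinatorially, expanding $W^{(\omega)}(i,j)=\sum_{n\geq 0}W^{(\omega,n)}(i,j)$ with $W^{(\omega,n)}(i,j)\leq \frac{W(i,j)}{n!}\bigl(\sum_{i<u<j}\omega(u)W(i,u)\bigr)^n$. Either route works; as written, yours does not.
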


\begin{proof}
The first equality of \eqref{limit} follows directly from \eqref{time2}. According to \eqref{time4}, we have
\begin{equation*}
	\lim_{N\rightarrow\infty} Z^{(\omega)}(i,N)-\frac{W^{(\omega)}(i,N)}{W^{(\omega)}(a,N)}Z^{(\omega)}(a,N)\leq \lim_{N\rightarrow\infty} \mathbb{P}_i[T_{N+}^{(Y)}<T_a^{(Y)}]=0,
\end{equation*}
which implies the second equality of \eqref{limit}.

In order to prove \eqref{Z_omega}, we first claim that
\begin{equation}\label{W W_omega_n}
	W^{(\omega)}(i,j)=\sum_{n=0}^{\infty} W^{(\omega,n)}(i,j),
\end{equation}
where
\begin{equation*}
	W^{(\omega,0)}(i,j)=W(i,j),
\end{equation*}
and for any $n \geq 0$,
\begin{equation*}
	W^{(\omega,n+1)}(i,j)=\sum_{i<u<j} W^{(\omega,n)}(i,u) \omega(u) W(u,j).
\end{equation*}
Indeed, by induction, it follows from \eqref{G_omega} that for any $l\geq k\geq 1$,
\begin{equation*}
	G_k^{(l)}(\omega)=G_k^{(l)}+\sum_{u=k}^{l-1}\sum_{j=u+1}^{l} \frac{G_k^{(u)}(\omega)\omega(u) G_j^{(l)}}{q_{u,u-1}},
\end{equation*} 
which indicates from \eqref{W^omega} that for any $i,l\geq 0$,
\begin{equation*}
	W^{(\omega)}(i,i+l+1)=W(i,i+l+1)+\sum_{u=i+1}^{i+l}W^{(\omega)}(i,u) \omega(u) W(u,i+l+1).
\end{equation*}
Now we prove \eqref{W W_omega_n} by induction. According to \eqref{W^omega}, for any $i\geq 0$ and $j=i+1$, \eqref{W W_omega_n} holds. Assume that for any $j\leq i+l$, \eqref{W W_omega_n} holds, and we prove for $j=i+l+1$ as follows
\begin{align*}
	\sum_{n=0}^{l} W^{(\omega,n)}(i,i+l+1)&=W(i,i+l+1)+\sum_{n=0}^{l-1} \sum_{u=i+1}^{i+l} W^{(\omega,n)}(i,u) \omega(u) W(u,i+l+1)\\
	&=W(i,i+l+1) +\sum_{n=0}^{l-1} \sum_{u=i+n+1}^{i+l} W^{(\omega,n)}(i,u)\omega(u)W(u,i+l+1)\\
	&=W(i,i+l+1) +\sum_{u=i+1}^{i+l} \sum_{n=0}^{u-i-1} W^{(\omega,n)}(i,u)\omega(u)W(u,i+l+1)\\
	&=W(i,i+l+1) +\sum_{u=i+1}^{i+l} W^{(\omega)}(i,u) \omega(u)W(u,i+l+1).
\end{align*}
So we prove this claim.

Then by using the method similar to \cite[Proposition 3.3]{Y22}, it follows from \eqref{W W_omega_n} that for any $n\geq 0$,
\begin{equation*}
	W^{(\omega,n)}(i,j)\leq \frac{W(i,j)}{n!}\(\sum_{i<u<j} \omega(u)W(i,u)\)^{n}.
\end{equation*} 
Obviously, for any $0\leq i<j$, $W(i,j)\leq W(0,j)$.
If $C=\sum_{u=1}^{\infty}\omega_u W(0,u)<\infty$, then
\begin{equation}\label{W_c W}
	W^{(\omega)}(i,j)\leq W(i,j)e^C,
\end{equation}
which implies that for any $0\leq i<j<\infty$,
\begin{equation*}
	Z^{(\omega)}(i,j)\leq 1+Ce^C<\infty.
\end{equation*}
So $Z^{(\omega)}(i,\infty)<\infty$. And we complete the proof by using \eqref{limit}.
\end{proof}

Based on the above two theorems, we come to the following corollary. In Section 4, we will show that \eqref{h(i)} is a harmonic function for $X$, which plays an important role in the proof of our main results.

\begin{cor}
Assume the process $Y$ satisfy for any $i\geq 1$, $\mathbb{P}_i[T_0^{(Y)}<\infty]=1$. Assume $\sum_{u=1}^{\infty} c_u W(0,u)<\infty$, then
\begin{equation}\label{h(i)}
	\mathbb{P}_x[T_0<T_\partial]=\frac{Z^{(c)}(x,\infty)}{Z^{(c)}(0,\infty)},
\end{equation}
and
\begin{equation*}
	G^0(x,y)=\mathbb{E}_x\[\int_{0}^{T_0\wedge T_\partial}\mathbf{1}_{[X_t=y]}\d t\]=\frac{W^{(c)}(0,y)Z^{(c)}(x,\infty)}{Z^{(c)}(0,\infty)}-W^{(c)}(x,y).
\end{equation*}
If $\sum_{u=1}^{\infty}W(0,u)<\infty$ also holds, then for any $q> 0$, $0<i,j<\infty$ and $i,j\neq \partial$,
\begin{equation*}
	\mathbb{E}_i \[\int_{0}^{T_0\wedge T_\partial} e^{-qt} \mathbf{1}_{[X_t=j]} \d t \]=\frac{W^{(q+c)}(0,j)Z^{(q+c)}(i,\infty)}{Z^{(q+c)}(0,\infty)}-W^{(q+c)}(i,j),
\end{equation*}
where $q+c$ denotes the function $(q_i+c_i)_{i\geq 1}$.
\end{cor}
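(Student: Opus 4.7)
The plan is to obtain all three identities by letting $N\to\infty$ in the two-sided exit identities of Theorem \ref{two side exit}, with the limiting ratios controlled by Theorem \ref{downward integral}. Throughout, the hypothesis $\mathbb{P}_i[T_0^{(Y)}<\infty]=1$ together with the construction of $X$ given at the start of Section~2 yields $\mathbb{P}_i[T_0\wedge T_\partial<\infty]=1$, which is what ensures the relevant events behave well in the limit.

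For \eqref{h(i)}, I would apply Theorem \ref{two side exit}(3) with $\omega\equiv 0$ and $a=0$. Since all $\omega_j$-sums vanish, the identity collapses to $\mathbb{P}_i[T_{N+}\wedge T_\partial<T_0]=1-W^{(c)}(i,N)/W^{(c)}(0,N)$. As $T_{N+}\wedge T_\partial$ is non-decreasing in $N$ and converges to $T_\partial$, the event $\{T_{N+}\wedge T_\partial<T_0\}$ decreases to $\{T_\partial<T_0\}$, so $\mathbb{P}_i[T_0<T_\partial]=\lim_N W^{(c)}(i,N)/W^{(c)}(0,N)$. The second equality in \eqref{limit} then rewrites this as the ratio of $Z^{(c)}$-quantities, and the summability assumption $\sum_u c_u W(0,u)<\infty$ guarantees via \eqref{Z_omega} that both $Z^{(c)}(i,\infty)$ and $Z^{(c)}(0,\infty)$ are finite.

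For the Green function $G^0(x,y)$, I would use Theorem \ref{two side exit}(2) with $\omega\equiv 0$ and $a=0$, yielding
\begin{equation*}
\mathbb{E}_i\!\[\int_{0}^{T_0\wedge T_{N+}\wedge T_\partial}\mathbf{1}_{[X_t=j]}\,\d t\]=\frac{W^{(c)}(0,j)W^{(c)}(i,N)}{W^{(c)}(0,N)}-W^{(c)}(i,j).
\end{equation*}
As $N\to\infty$ the integration range increases to $[0,T_0\wedge T_\partial]$, so monotone convergence identifies the left side with $G^0(i,j)$, while the first identity \eqref{h(i)} handles the limit of the $W^{(c)}(i,N)/W^{(c)}(0,N)$ ratio. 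For the resolvent identity, I apply Theorem \ref{two side exit}(2) again but now with the constant function $\omega\equiv q$, so that $\omega+c=q+c$. The additional hypothesis $\sum_u W(0,u)<\infty$ combined with $\sum_u c_u W(0,u)<\infty$ gives $\sum_u(q+c_u)W(0,u)<\infty$, which is precisely the integrability condition needed to invoke Theorem \ref{downward integral} with $\omega=q+c$; this yields $\lim_N W^{(q+c)}(i,N)/W^{(q+c)}(0,N)=Z^{(q+c)}(i,\infty)/Z^{(q+c)}(0,\infty)$, and monotone convergence on the left completes the argument.

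The main obstacle, though a mild one, is purely bookkeeping: verifying that the summability hypotheses in each part of the corollary place us in the regime where Theorem \ref{downward integral} applies, so that the $W^{(\omega)}/W^{(\omega)}$ ratio and the $Z^{(\omega)}/Z^{(\omega)}$ ratio both converge to the same finite limit. Once this is set up, each identity is a one-line passage to the limit in the corresponding formula of Theorem \ref{two side exit}.
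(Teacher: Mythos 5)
Your proposal is correct and follows essentially the same route as the paper: pass to the limit $N\to\infty$ in the two-sided exit identities of Theorem \ref{two side exit} (the paper uses part (1) for the hitting probability where you use the complementary part (3), which is equivalent), and identify the limiting ratio $W^{(\omega+c)}(i,N)/W^{(\omega+c)}(0,N)$ with $Z^{(\omega+c)}(i,\infty)/Z^{(\omega+c)}(0,\infty)$ via Theorem \ref{downward integral}, checking exactly the summability conditions you list. The only point worth making explicit, which the paper covers by invoking uniqueness (non-explosion) of $X$, is that $T_{N+}\wedge T_\partial\uparrow T_\partial$ on the relevant events so that the monotone limits are the stated quantities.
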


\begin{proof}
(1) According to Theorem \ref{two side exit} (1), we arrive that
\begin{equation*}
	\mathbb{P}_i[T_0<T_{N+}\wedge T_\partial]=\frac{W^{(c)}(i,N)}{W^{(c)}(0,N)}.
\end{equation*}
Since $Y$ is unique, we get that $X$ is unique, which implies that
for any $i\geq 1$,
\begin{equation*}
	\mathbb{P}_i[T_0<T_\partial]=\lim_{N\rightarrow\infty} \mathbb{P}_i[T_0<T_{N+}\wedge T_\partial]=\lim_{N\rightarrow\infty} \frac{W^{(c)}(i,N)}{W^{(c)}(0,N)}.
\end{equation*}
According to Theorem \ref{downward integral}, we have
\begin{equation*}
	\lim_{N\rightarrow\infty} \frac{W^{(c)}(i,N)}{W^{(c)}(0,N)}=\lim_{N\rightarrow\infty}\frac{Z^{(c)}(i,N)}{Z^{(c)}(0,N)}=\frac{Z^{(c)}(x,\infty)}{Z^{(c)}(0,\infty)},
\end{equation*}
which completes the proof of the first argument.

(2) The second and third arguments can be proved by using Theorem \eqref{two side exit}(2) and an argument similar to (1), so we omit their proofs. 
\end{proof}

\section{QSDs for single death processes without killing}

In this section, we revisit the results for the QSDs for single death processes without killing with absorbing state $0$. We prove that the uniform exponential decay is equivalent to the uniqueness of QSD.  

Recall that $Y$ is a totally stable single death process on $E$, where $0$ is an absorbing state and  $E\setminus\{0\}$ forms an irreducible class. We denote the decay parameter of $Y$ by 
\begin{equation*}
\lambda_0^{(Y)}=\sup \{\lambda\geq 0: \mathbb{E}_x[e^{\lambda T_0^{(Y)}}]<\infty\},
\end{equation*}
which is independent of the choice of $x>0$. According to \cite[Proposition 4.12]{CMS13}, for any $i,j\geq 1$,
\begin{equation}\label{lambda_Y}
\lambda_0^{(Y)}=-\lim_{t\rightarrow\infty} \frac{1}{t} \log p_{ij}^{(Y)}(t),
\end{equation}
where $p_{ij}^{(Y)}(t)=\mathbb{P}_i[Y_t=j,t<T_0^{(Y)}]$.

We combine the theorems on QSDs for single death processes in \cite{Z18} and \cite{Y22}, and obtain the following results, which indicate that there are three possible scenarios for the QSD of a single death process without killing. The following proposition shows that if $Y$ comes down from infinity, then there exists a unique QSD for $Y$ and the uniform exponential convergence to QSD in the total variation norm holds.

\begin{prop}\label{no-killing unique}
Assume that for any $n\geq 0$, $\mathbb{P}_n[T_0^{(Y)}<\infty]=1$. Then the following statements are equivalent.
\begin{enumerate}[label=(\arabic*)]
	\item $S=\sum_{u=1}^{\infty}W(0,u)=\sum_{n\geq 1} \sum_{k\geq n} \frac{G_n^{(k)}}{q_{k,k-1}} <\infty$, that is, $\sup_{n\geq 1} \mathbb{E}_n[T_0^{(Y)}]<\infty$.
	
	\item $Y$ comes down from infinity, that is, there exists $t_0>0$ such that
	\begin{equation*}
		\lim_{x\rightarrow\infty} \mathbb{P}_x[T_0^{(Y)}<t_0]>0.
	\end{equation*}
	
	\item There exists a unique QSD $\nu$ for $Y$.
	
	\item 	There exists a unique QSD $\nu$ for $Y$ such that for any probability measure $\mu$ on $E_+$,
	\begin{equation*}
		\Vert \mathbb{P}_\mu (Y_t \in \cdot | t<T_0^{(Y)}) -\nu \Vert_{TV} \leq 2(1-\gamma)^{[t]},\quad  t\geq 0,
	\end{equation*}
	where $\gamma$ is a positive constant independent of $\mu$, and the decay parameter of $\nu$ is $\lambda_0^{(Y)}$.
	
	\item $Y$ is uniform exponential decay, and the decay parameter is equal to the exponential decay parameter, that is,
	\begin{equation*}
		\lambda_0^{(Y)}=-\lim_{t\rightarrow+\infty} \frac{1}{t} \log \sup_i \mathbb{P}_i[T_0^{(Y)}>t]>0.
	\end{equation*}
\end{enumerate}
Furthermore if any of the statements holds, the QSD $\nu$ can be represented as follows:
\begin{equation*}
	\nu_i=\lambda_0^{(Y)}W^{(-\lambda_0^{(Y)})}(0,i), \quad i\geq 1.
\end{equation*}
\end{prop}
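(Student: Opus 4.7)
The plan is to assemble the equivalences by combining the potential-theoretic identities of Section 2 with the main results of \cite{Z18} and \cite{Y22}. I would organize the argument as a cyclic chain $(1)\Rightarrow(2)\Rightarrow(5)\Rightarrow(4)\Rightarrow(3)\Rightarrow(1)$, and then deduce the explicit representation of $\nu$ separately.

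First, I would translate (1) into a statement about expected hitting times. Applying the expected-occupation identity \eqref{measure2} with $\omega\equiv 0$, summing over $j$, and letting $N\to\infty$ (which is legitimate since $\mathbb{P}_i[T_0^{(Y)}<\infty]=1$ forces $W(i,N)/W(0,N)\to 1$), I obtain
\begin{equation*}
\mathbb{E}_i[T_0^{(Y)}]=\sum_{j=1}^{\infty}\bigl[W(0,j)-W(i,j)\bigr],
\end{equation*}
whence $\sup_{i\geq 1}\mathbb{E}_i[T_0^{(Y)}]=\sum_{j\geq 1}W(0,j)=S$. This immediately gives the second equality in (1). For $(1)\Rightarrow(2)$, Markov's inequality gives $\sup_i \mathbb{P}_i[T_0^{(Y)}>t_0]\leq S/t_0<1$ whenever $t_0>S$. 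Conversely, if $Y$ comes down from infinity with parameters $(t_0,\delta)$, iterating the strong Markov property gives $\sup_i \mathbb{P}_i[T_0^{(Y)}>k t_0]\leq (1-\delta)^k$, from which $\sup_i\mathbb{E}_i[T_0^{(Y)}]\leq t_0/\delta<\infty$, i.e.\ $S<\infty$.

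For $(2)\Rightarrow(5)$, the same iterated tail bound yields $\sup_i\mathbb{P}_i[T_0^{(Y)}>t]\leq C e^{-\gamma t}$ for some $\gamma>0$. By \eqref{lambda_Y} and the irreducibility of $E_+$, the pointwise decay parameter $\lambda_0^{(Y)}$ does not depend on the starting state, and is therefore identified with the uniform rate. The implication $(5)\Rightarrow(4)$ is the content of \cite{Z18}: for single death processes without killing, uniform exponential decay yields uniform exponential convergence of the conditional laws to a unique QSD in total variation, with the decay parameter of the QSD equal to $\lambda_0^{(Y)}$. The step $(4)\Rightarrow(3)$ is trivial. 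Finally $(3)\Rightarrow(1)$ is the uniqueness part of \cite{Y22}, which shows via potential theory that absence of a summability condition equivalent to $S<\infty$ produces a one-parameter family of QSDs, contradicting uniqueness.

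For the explicit representation, I would argue that any QSD $\nu$ must be a left $(-\lambda_0^{(Y)})$-eigenvector of $Q^{(Y)}$ supported on $E_+$, and identify this eigenvector as $\nu_i=\lambda_0^{(Y)}W^{(-\lambda_0^{(Y)})}(0,i)$ using Lemma \ref{Poisson} with the constant weight $\omega\equiv-\lambda_0^{(Y)}$, together with the $N\to\infty$ limit provided by Theorem \ref{downward integral}; the scalar $\lambda_0^{(Y)}$ is the normalizing constant forcing $\nu$ to be a probability measure. The main obstacle is the last point: one needs to verify carefully that $\sum_{i\geq 1}W^{(-\lambda_0^{(Y)})}(0,i)=1/\lambda_0^{(Y)}$, which requires checking that the $\omega$-killed potential is summable at $\omega=-\lambda_0^{(Y)}$ precisely at the critical value, and that the ensuing eigenvector is non-negative. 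This is where the quantitative estimate \eqref{W_c W} (extended to signed $\omega$ using the series expansion \eqref{W W_omega_n}) and the precise characterization of $\lambda_0^{(Y)}$ as the radius of convergence of this series are needed. Separating this analytic step from the probabilistic cycle above is the main bookkeeping issue in the proof.
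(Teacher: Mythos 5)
Your overall architecture (delegating the hard implications to \cite{Z18} and \cite{Y22} and supplying elementary probabilistic arguments for the rest) matches the paper's, and several pieces are fine: the identity $\sup_i\mathbb{E}_i[T_0^{(Y)}]=\sum_{j\geq 1}W(0,j)$ via \eqref{measure2}, the Markov-inequality proof of $(1)\Rightarrow(2)$ (which is in fact slightly cleaner than the paper's Fubini/monotone-convergence argument), and the iterated strong Markov bound for $(2)\Rightarrow(1)$. But there is a genuine gap in your step $(2)\Rightarrow(5)$. Statement (5) asserts not merely that $\sup_i\mathbb{P}_i[T_0^{(Y)}>t]$ decays exponentially at \emph{some} positive rate, but that the uniform rate \emph{equals} $\lambda_0^{(Y)}$. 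Your geometric iteration gives $\sup_i\mathbb{P}_i[T_0^{(Y)}>kt_0]\leq(1-\delta)^k$, i.e.\ a positive uniform rate $\gamma$, and the trivial bound $\sup_i\mathbb{P}_i[T_0^{(Y)}>t]\geq p^{(Y)}_{jk}(t)$ gives the uniform rate $\leq\lambda_0^{(Y)}$; what is missing is the reverse inequality, that the uniform rate is at least $\lambda_0^{(Y)}$. Your justification --- that the pointwise decay parameter is independent of the starting state and is ``therefore identified with the uniform rate'' --- is a non sequitur: a supremum over states can decay strictly slower than each individual state's tail, and this distinction (exponential versus uniformly exponential decay) is exactly what the proposition is about. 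The paper avoids this by proving $(4)\Rightarrow(5)$ instead: it uses the convergence of the conditional laws to the QSD to get the ratio limit $\mathbb{P}_\mu[T_0^{(Y)}>t+1]/\mathbb{P}_\mu[T_0^{(Y)}>t]\to e^{-\lambda_0^{(Y)}}$ for \emph{every} initial law $\mu$, deduces $\mathbb{E}_\mu[e^{\lambda T_0^{(Y)}}]<\infty$ for all $\lambda<\lambda_0^{(Y)}$, upgrades this to $\sup_n\mathbb{E}_n[e^{\lambda T_0^{(Y)}}]<\infty$ by a mixture-measure contradiction argument, and concludes by Chebyshev. Some such input from (4) (or from (1) via the cited results) seems unavoidable; your chain as written does not supply it.

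A secondary, repairable issue: you attribute $(5)\Rightarrow(4)$ to \cite{Z18}, but the result the paper actually draws from \cite{Z18} is $(1)\Rightarrow(4)$. Since you also prove $(2)\Leftrightarrow(1)$ and the easy $(5)\Rightarrow(2)$ is available (uniform exponential decay gives $\sup_n\mathbb{P}_n[T_0^{(Y)}>t_0]<1$, and monotonicity in $n$ turns this into the limit statement), you could reroute as $(5)\Rightarrow(2)\Rightarrow(1)\Rightarrow(4)$ and cite \cite{Z18} correctly; but you would still need the paper's $(4)\Rightarrow(5)$ argument to close the loop on statement (5). Your sketch for the explicit representation of $\nu$ is more work than the paper does (it inherits the formula from the cited results), and your worry about the normalization at the critical value $\omega\equiv-\lambda_0^{(Y)}$ is legitimate but moot once the representation is taken from \cite{Z18}/\cite{Y22}.
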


\begin{proof}
Obviously, $(4)\Rightarrow(3)$ holds. $(1)\Rightarrow(4)$ has been proved in \cite{Z18}. $(3)\Rightarrow(1)$ has been proved in \cite{Y22}. So it remains to prove $(1)\Leftrightarrow(2)$, $(4)\Rightarrow(5)$ and $(5)\Rightarrow(2)$.

Firstly, we prove $(1)\Rightarrow(2)$. We show that if $\sup_{n\geq 1} \mathbb{E}_n[T_0^{(Y)}]<\infty$, then for any $\epsilon>0$, there exists $t_{\epsilon}>0$ such that $\lim_{x\rightarrow\infty}\mathbb{P}_x[T_0^{(Y)}>t_{\epsilon}]<\epsilon$.  

By using strong Markov property and skip-free property of $Y$, we get that for any $t>0$ and $n\geq 1$, $\mathbb{P}_n[T_0^{(Y)}>t]\leq \mathbb{P}_{n+1}[T_0^{(Y)}>t]$. So $\lim_{n\rightarrow \infty} \mathbb{P}_n[T_0^{(Y)}>t]$ exists. By using Fubini theorem and monotone convergence theorem, 
\begin{equation*}
	\lim_{n\rightarrow \infty} \mathbb{E}_n[T_0^{(Y)}]=\lim_{n\rightarrow \infty} \int_{0}^{\infty} \mathbb{P}_n[T_0^{(Y)}>t] \d t=\int_{0}^{\infty} \lim_{n\rightarrow \infty}\mathbb{P}_n[T_0^{(Y)}>t] \d t<\infty.
\end{equation*}
Since $\lim_{n\rightarrow \infty} \mathbb{P}_n[T_0^{(Y)}>t]$ is a decreasing function respect to $t$, we have
\begin{equation*}
	\lim_{t\rightarrow \infty} \lim_{n\rightarrow \infty} \mathbb{P}_n[T_0^{(Y)}>t]=0,
\end{equation*}
which implies the existence of $t_\epsilon$. So $(1)\Rightarrow(2)$.

Secondly, we prove $(2)\Rightarrow(1)$. Given that $Y$ satisfies for any $i\geq 1$, $\mathbb{P}_i[T_0^{(Y)}<\infty]=1$,  
it follows from \cite[Proposition 2.5]{Z18} or \cite[Proposition A.1]{Y22} that $S=\sup_{n\geq 1}\mathbb{E}_n[T_0^{(Y)}]=\sum_{u=1}^{\infty} W(0,u)$.

Suppose that
\begin{equation*}
	\lim_{n\rightarrow \infty} \mathbb{P}_n[T_0^{(Y)}>t_0]=c\in [0,1).
\end{equation*}
By using Markov property, for any $k\geq 1$, we have
\begin{equation*}
	\mathbb{P}_n[T_0^{(Y)}>kt_0]\leq c^k.
\end{equation*}
So by using Fubini theorem,
\begin{equation*}
	\sup_{n\geq 1}\mathbb{E}_n[T_0^{(Y)}]\leq t_0 \sum_{k\geq 0} \sup_{n\geq 1} \mathbb{P}_n[T_0>k t_0] \leq \frac{t_0}{1-c}<\infty.
\end{equation*}

Thirdly, we prove $(4)\Rightarrow(5)$. We claim that for any $\lambda<\lambda_0^{(Y)}$,
\begin{equation}\label{lambda T_0 Y}
	\sup_{n\geq 1}\mathbb{E}_n[e^{\lambda T_0^{(Y)}}]<\infty.
\end{equation}
Indeed, from the statement (4), we have that for any $t>0$, 
$\mathbb{P}_\nu(T_0^{(Y)}>t)= e^{-\lambda_0^{(Y)} t}$,
and for any probability measure $\mu$ on $E_+$, and any bounded, measurable function $f$ on $E_+$,
\begin{equation*}
	\mathbb{E}_\mu \[f(Y_t)|t<T_0^{(Y)}\] = \frac{\mathbb{E}_\mu\[f(Y_t)\mathbf{1}_{[t<T_0^{(Y)}]}\]}{\mathbb{P}_\mu\[t<T_0^{(Y)}\]} \rightarrow \nu(f) \quad \text{as} \ t\rightarrow \infty.		
\end{equation*}
Let $f(x)=\mathbb{P}_x[T_0^{(Y)}>1]$, then $\nu(f)=e^{-\lambda_0^{(Y)}}$.
According to Markov property, we have
$\lim_{t\rightarrow \infty} \mathbb{P}_\mu[T_0^{(Y)}>t+1]/\mathbb{P}_\mu[T_0^{(Y)}>t] = e^{-\lambda_0^{(Y)}}$.
Then we take $\epsilon >0 $ such that $(1+\epsilon)e^{\lambda-\lambda_0^{(Y)}}<1$. 
So there exists $t_0 \in \mathbb{N} $ such that for any $t\geq t_0$, $\mathbb{P}_\mu[T_0^{(Y)}>t+1] \leq e^{-\lambda_0^{(Y)}}(1+\epsilon)\mathbb{P}_\mu[T_0^{(Y)}>t]$. By induction, we get that for any $n\geq 1$, $\mathbb{P}_\mu[T_0^{(Y)}>t+n] \leq e^{-\lambda_0^{(Y)} n}(1+\epsilon)^n\mathbb{P}_\mu[T_0^{(Y)}>t]$.
Then by using Fubini theorem,
\begin{align*}
	\mathbb{E}_\mu [e^{\lambda T_0^{(Y)}}]&=\int_{0}^{\infty} e^{\lambda t} \mathbb{P}_\mu[T_0^{(Y)}\in  \d t] =\lambda \int_{0}^{\infty} e^{\lambda t} \mathbb{P}_\mu[T_0^{(Y)}>t] \d t+1\\
	&\leq \lambda t_0 e^{\lambda t_0}+\lambda \sum_{k=t_0}^{\infty} e^{\lambda (k+1)} \mathbb{P}_\mu[T_0^{(Y)}>k]+1\\
	&\leq \lambda t_0 e^{\lambda t_0}+C_1\sum_{k=t_0}^{+\infty} e^{(\lambda-\lambda_0^{(Y)})k}(1+\epsilon)^k +1<\infty,
\end{align*}
where $C_1$ is a constant. It comes to a conclusion that for any probability measure $\mu$ on $E_+$, and any $\lambda \in (0,\lambda_0^{(Y)})$, $\mathbb{E}_\mu[e^{\lambda T_0^{(Y)}}] <\infty$. Then let $g(n)=\mathbb{E}_n [e^{\lambda T_0^{(Y)}}]$. We claim that $g$ is bounded. If not, there would exist sequences $\{x_n\}$ such that $g(x_n)\geq 2^n$, $n \geq 1$. However, if we take $\eta=\sum_{n=1}^{\infty} \frac{1}{2^n} \delta_{x_n}$, where $\delta_x$ is the dirac measure, then it is easy to verify that $\mathbb{E}_\eta[e^{\lambda T_0^{(Y)}}]=+\infty$, a contradiction. So we obtain \eqref{lambda T_0 Y}. 

By using Chebyshev inequality, we have for any $\lambda<\lambda_0^{(Y)}$,
$\sup_n \mathbb{P}_n[T_0^{(Y)}>t] \leq e^{-\lambda t} \sup_n \mathbb{E}_n[e^{\lambda T_0^{(Y)}}]$.
Then 
\begin{equation*}
	\liminf_{t\rightarrow +\infty }-\frac{1}{t} \log \sup_n \mathbb{P}_n[T_0^{(Y)}>t]\geq \lambda.
\end{equation*}
So 
\begin{equation*}
	\liminf_{t\rightarrow +\infty }-\frac{1}{t} \log \sup_n \mathbb{P}_n[T_0^{(Y)}>t]\geq \lambda_0^{(Y)}.
\end{equation*}
Since for any $i,j\geq 1$,
$ \sup_n \mathbb{P}_n[T_0^{(Y)}>t]\geq p_{ij}^{(Y)}(t)$,
it follows from \eqref{lambda_Y} that 
\begin{equation*}
	\limsup_{t\rightarrow +\infty }-\frac{1}{t} \log \sup_n \mathbb{P}_n[T_0^{(Y)}>t]\leq \lambda_0^{(Y)},
\end{equation*}
which completes the proof of $(4)\Rightarrow(5)$.

Finally, we prove $(5)\Rightarrow(2)$. It follows from (5) that there exists $t_0>0$, 
\begin{equation*}
	\sup_{n\geq 1} \mathbb{P}_n[T_0^{(Y)}>t_0]<1.
\end{equation*}
By using the strong Markov property and the skip-free property, for any $n\geq 1$,
\begin{equation*}
	\mathbb{P}_n[T_0^{(Y)}>t_0]\leq \mathbb{P}_{n+1} [T_0^{(Y)}>t_0],
\end{equation*}
which implies that 
\begin{equation*}
	\lim_{n\rightarrow \infty} \mathbb{P}_n[T_0^{(Y)}<t_0]>0.
\end{equation*}
\end{proof}

The next lemma is from \cite{Y22}, which implies that if $Y$ is exponential decay but not coming down from infinity, then there exists a continuum of QSDs for $Y$. 

\begin{lem}\label{QSD_Y}
Assume that for any $n\geq 0$, $\mathbb{P}_n[T_0^{(Y)}<\infty]=1$. There exists a continuum of QSDs $\{\nu^{\theta}\}_{\theta \in (0,\lambda_0^{(Y)}]}$ for $\{Y_t: t\geq 0\}$ if and only if $S=\infty$ and $\lambda_0^{(Y)}>0$, where
\begin{equation*}
	\nu^{\theta}_i=\theta W^{(-\theta)}(0,i), \quad i\geq 1.
\end{equation*}
\end{lem}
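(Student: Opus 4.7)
The plan is to prove both directions of the equivalence. The necessity direction ($\Rightarrow$) is immediate from Proposition \ref{no-killing unique}: the existence of a continuum of QSDs in particular rules out uniqueness of the QSD, so $S=\infty$, and any single QSD $\nu$ with extinction rate $\theta>0$ gives $\mathbb{P}_\nu[T_0^{(Y)}>t]=e^{-\theta t}$, hence $\mathbb{E}_\nu[e^{\lambda T_0^{(Y)}}]<\infty$ for all $\lambda<\theta$, so $\lambda_0^{(Y)}\geq\theta>0$.

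For sufficiency ($\Leftarrow$), given $S=\infty$ and $\lambda_0^{(Y)}>0$, fix $\theta\in(0,\lambda_0^{(Y)}]$ and set $\nu^\theta_i:=\theta W^{(-\theta)}(0,i)$ for $i\geq 1$. The proof then proceeds via four steps: (a) the eigen-equation $\nu^\theta Q^{(Y)}=-\theta\nu^\theta$ on $E_+$; (b) non-negativity $\nu^\theta_i\geq 0$; (c) normalisation $\sum_{i\geq 1}\nu^\theta_i=1$; and (d) distinctness across $\theta$. For (a), I would apply Lemma \ref{Poisson} with $\omega\equiv-\theta$, obtaining the ``column'' identity $(Q^{(Y)}+\theta\tilde I)W^{(-\theta)}(\cdot,j)=\delta_{\cdot,j}$ on rows $i\geq 1$ (where $\tilde I$ is the identity on $E_+$), and then derive the companion ``row'' relation for $W^{(-\theta)}(0,\cdot)$ by induction on $j$, starting from the boundary value $W^{(-\theta)}(0,1)=1/q_{1,0}$ (coming from $G_1^{(1)}(-\theta)=1$ and \eqref{W^omega}) and using the recurrence \eqref{G_omega}; this row relation is exactly the left eigen-equation. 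Given (a), step (c) follows by summing the eigen-equation over $j\geq 1$, invoking the conservativity of $Q^{(Y)}$ (only the boundary contribution $Q^{(Y)}_{1,0}=q_{1,0}$ survives) to obtain $\nu_1^\theta q_{1,0}=\theta\sum_{i\geq 1}\nu_i^\theta$, and then substituting $\nu_1^\theta=\theta/q_{1,0}$. Step (d) is immediate because distinct $\theta$ yield distinct exponential rates of the absorption time.

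The main obstacle is step (b), the non-negativity of $W^{(-\theta)}(0,i)$, since the recursion defining $W^{(-\theta)}$ involves the possibly negative coefficients $q_n^{(l)}-\theta$. The hypothesis $\theta\leq\lambda_0^{(Y)}$ is the key input. I would expand $W^{(-\theta)}$ as a power series in $(-\theta)$ analogous to \eqref{W W_omega_n}, using the recurrence \eqref{G_omega} to produce an iterated non-negative kernel at each order; the resulting series is dominated by the Laplace transform $\mathbb{E}_i[e^{\theta T_0^{(Y)}}]=\sum_n \theta^n\mathbb{E}_i[(T_0^{(Y)})^n]/n!$, which is finite for $\theta<\lambda_0^{(Y)}$ by definition of the decay parameter. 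A probabilistic interpretation of each iterated kernel (as an occupation-measure contribution along excursions of $Y$) gives non-negativity of the partial sums and hence of the limit. The boundary case $\theta=\lambda_0^{(Y)}$ is then handled by taking a monotone limit $\theta_n\uparrow\lambda_0^{(Y)}$ and invoking Fatou's lemma to preserve both the non-negativity and the normalisation $\sum_i\nu^\theta_i=1$ in the limit.
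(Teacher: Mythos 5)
First, a point of comparison: the paper does not actually prove Lemma \ref{QSD_Y} --- it is quoted from \cite{Y22} --- so there is no internal argument to measure yours against. Judged on its own terms, your outline follows the standard route (left eigen-equation for $Q^{(Y)}$, positivity, normalisation, distinctness), and your necessity direction via Proposition \ref{no-killing unique} is fine. The sufficiency argument, however, has two genuine gaps.

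The first is step (c). You sum the eigen-equation $\sum_i\nu^\theta_i q^{(Y)}_{ij}=-\theta\nu^\theta_j$ over $j$ and interchange the order of summation to get $q_{1,0}\nu^\theta_1=\theta\sum_i\nu^\theta_i$. The terms $\nu^\theta_i q^{(Y)}_{ij}$ are not of one sign, so this interchange needs justification, and it is precisely the point where the hypothesis $S=\infty$ must enter: your sufficiency argument never invokes $S=\infty$, so if it were correct as written it would produce a QSD $\nu^\theta$ for every $\theta\in(0,\lambda_0^{(Y)}]$ even when $S<\infty$, contradicting the uniqueness in Proposition \ref{no-killing unique}. (This is the single-death analogue of van Doorn's observation for birth--death chains that a summable $\theta$-invariant measure of $Q$ is a QSD only when the total-mass identity $\theta\sum_i\nu_i=q_{1,0}\nu_1$ actually holds; it fails for $\theta<\lambda_0$ in the entrance-boundary case.) Relatedly, even granting that identity, passing from $\nu^\theta Q^{(Y)}=-\theta\nu^\theta$ to the semigroup relation $\sum_i\nu^\theta_i p^{(Y)}_{ij}(t)=e^{-\theta t}\nu^\theta_j$, which is what the QSD characterisation in \cite{CMS13} requires, is not automatic and needs an argument.

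The second is step (b). With $\omega\equiv-\theta$ the expansion \eqref{W W_omega_n} becomes an alternating series $W^{(-\theta)}(0,j)=\sum_{n\geq 0}(-\theta)^n K_n(0,j)$ with $K_n\geq 0$; domination by $\mathbb{E}_i[e^{\theta T_0^{(Y)}}]$ gives absolute convergence for $\theta<\lambda_0^{(Y)}$, but says nothing about the sign of the sum, and ``non-negativity of the partial sums'' is not a consequence of non-negativity of the individual kernels $K_n$. A workable replacement is to analytically continue the identity \eqref{W_omega} (applied to the non-killing process) to $\omega\equiv-\theta$, which yields $W^{(-\theta)}(0,j)=\bigl(q_{j,j-1}\,\mathbb{E}_{j-1}[e^{\theta T_0^{(Y)}};\,T_0^{(Y)}<T_{j+}^{(Y)}]\bigr)^{-1}>0$ for $\theta<\lambda_0^{(Y)}$, with the endpoint $\theta=\lambda_0^{(Y)}$ handled by continuity in $\theta$, as you suggest for the boundary case.
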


We note it here that if $Y$ is not exponential decay, from \cite[Proposition 2.4]{CMS13}, there exists no QSD for $Y$.

\section{Proof of the main results}

In this section, we are going to prove our main results.  Firstly, we consider the ``small killing" case and prove Theorems \ref{exi_uni} and \ref{exp_con}. Secondly, we study the ``bounded killing" case and prove Theorem \ref{uni_exp_con}. Finally, we study the ``large killing" case, which corresponds to Theorem \ref{large_killing}.

\subsection{``Small killing" case: proofs of Theorem \ref{exi_uni} and \ref{exp_con}}

This subsection presents the results for the ``small killing" case, corresponding to Theorems \ref{exi_uni} and \ref{exp_con}. We introduce a property called ``coming down from infinity before killing" for $X$ and give an explicit characterization to it. In the final part of this subsection, we offer a probabilistic interpretation of the ``small killing" condition. In this subsection, we assume that for the non-killing process $Y$ corresponding to $X$, $\mathbb{P}_i[T_0^{(Y)}<\infty]=1$ for any $i\geq 1$.

As stated in Section 1, the main tools used in the proofs of Theorems \ref{exi_uni} and \ref{exp_con} are Doob's $h$-transform, probabilistic methods and the results of QSDs for single death processes without killing. The following proposition provides the construction of Doob's $h$-transformed process $\bar{X}$ and characterizes the basic properties of it. 

\begin{prop}\label{h process}
Suppose the assumptions in Theorem \ref{exi_uni} hold. Let
\begin{equation*}
	h(i)=\mathbb{P}_i[T_0<T_\partial]=\frac{Z^{(c)}(i,\infty)}{Z^{(c)}(0,\infty)}.
\end{equation*}
Then it is a harmonic function for $X$, that is, for any $i\geq 0$,
\begin{equation*}
	P(t)h(i)=\sum_{j=0}^{\infty} p_{ij}(t)h(j)=h(i).
\end{equation*}
Define $\bar{Q}=(\bar{q}_{ij})_{i,j\geq 0}$ with
\begin{equation*}
	\bar{q}_{ij}=\frac{q_{ij}h(j)}{h(i)}.
\end{equation*}
Let $\bar{X}$ be the minimal process corresponding to $\bar{Q}$, then its transition function $\bar{P}(t)=(\bar{p}_{ij}(t))_{i,j\geq 0}$ satisfying
\begin{equation}\label{bar{P}(t)}
	\bar{p}_{ij}(t)=\frac{p_{ij}(t)h(j)}{h(i)},
\end{equation}
and $\bar{X}$ is conservative on $E$, and for any $i\geq 1$, $\mathbb{P}_i[T_0^{(\bar{X})}<\infty]=1$,
where $T_0^{(\bar{X})}=\inf\{t\geq 0: \bar{X}(t)=0\}$.
\end{prop}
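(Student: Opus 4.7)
The plan is to exploit the probabilistic meaning $h(i)=\mathbb{P}_i[T_0<T_\partial]$ together with a standard Doob $h$-transform. First, I would establish harmonicity of $h$ by splitting at time $t$:
\begin{equation*}
h(i)=\mathbb{P}_i[T_0\leq t,\,T_0<T_\partial]+\mathbb{P}_i[t<T_0,\,T_0<T_\partial].
\end{equation*}
On $\{T_0\leq t,\,T_0<T_\partial\}$ the process sits at the absorbing state $0$ (with $c_0=0$, so $T_\partial=\infty$ after absorption), so this event equals $\{X_t=0,\,t<T_\partial\}$ and has probability $p_{i0}(t)=p_{i0}(t)h(0)$. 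On $\{t<T_0,\,T_0<T_\partial\}\subseteq\{t<T_0\wedge T_\partial\}$ the strong Markov property at $t$ gives $\sum_{j\geq 1}p_{ij}(t)h(j)$. Adding recovers $h(i)=\sum_{j\geq 0}p_{ij}(t)h(j)=P(t)h(i)$.

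Next, I would differentiate $P(t)h=h$ at $t=0$ (or verify directly from the $Z^{(c)}$-formula and the identities of Section~2) to obtain $\sum_j q_{ij}h(j)=0$ for every $i$. This yields $\sum_{j\neq i}\bar{q}_{ij}=h(i)^{-1}\sum_{j\neq i}q_{ij}h(j)=q_i=-\bar{q}_{ii}$, so $\bar{Q}$ is totally stable and conservative on $E$; moreover $\bar{q}_{0j}=q_{0j}h(j)/h(0)=0$ keeps $0$ absorbing for $\bar{X}$.

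The transition-function identity \eqref{bar{P}(t)} is the crux. The same case split used for harmonicity, combined with the strong Markov property, gives $\mathbb{P}_i[X_t=j,\,T_0<T_\partial]=p_{ij}(t)h(j)$ for every $j\in E$. Setting $\tilde{p}_{ij}(t):=p_{ij}(t)h(j)/h(i)$, this says that the time-$t$ law of $X$ under $\mathbb{P}_i[\,\cdot\mid T_0<T_\partial]$ equals $\tilde{p}_{i\cdot}(t)$. I would then verify directly, using Kolmogorov's backward equation for $p_{ij}(t)$, that $\tilde{p}$ solves the backward equation for $\bar{Q}$ with $\tilde{p}(0)=I$; by harmonicity $\sum_j\tilde{p}_{ij}(t)=P(t)h(i)/h(i)=1$, so $\tilde{p}$ is honest. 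Since the transition function of the minimal $\bar{Q}$-process is the smallest non-negative solution, any honest solution must coincide with it; hence $\bar{p}=\tilde{p}$ and $\bar{X}$ is non-explosive, i.e.~conservative.

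For the final claim, $\bar{p}_{i0}(t)=p_{i0}(t)/h(i)$, while $p_{i0}(t)=\mathbb{P}_i[T_0\leq t,\,T_0<T_\partial]\uparrow h(i)$ as $t\to\infty$; hence $\bar{p}_{i0}(t)\to 1$, and since $0$ is absorbing for $\bar{X}$, $\mathbb{P}_i[T_0^{(\bar{X})}<\infty]\geq\lim_{t\to\infty}\bar{p}_{i0}(t)=1$ for every $i\geq 1$. The main obstacle I anticipate is the identification of $\tilde{p}$ with the minimal process's transition function: a priori minimality only forces $\bar{p}\leq\tilde{p}$, and closing this gap requires the stochasticity of $\tilde{p}$. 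Harmonicity of $h$ delivers exactly this, so the first step is really what makes the whole construction go through.
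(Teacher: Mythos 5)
Your harmonicity argument and your derivation of the last claim are fine, but the central step --- identifying $\tilde p_{ij}(t):=p_{ij}(t)h(j)/h(i)$ with the minimal $\bar Q$-function $\bar p_{ij}(t)$ --- contains a genuine gap. You argue: minimality gives $\bar p\le\tilde p$, and since $\tilde p$ is honest the two must coincide. That inference is false in general. Minimality only says that the minimal solution is dominated by every non-negative solution of the backward equation; it does not say that an honest solution is automatically the minimal one. The standard uniqueness statement runs the other way: if the \emph{minimal} solution is honest, then it is the unique (sub-stochastic) solution. When the minimal solution is dishonest, the backward equation can perfectly well admit honest solutions that strictly dominate it --- Doob's instantaneous-return processes for an explosive chain are the classical example. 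So honesty of $\tilde p$ alone does not exclude the scenario $\sum_j\bar p_{ij}(t)<1=\sum_j\tilde p_{ij}(t)$, i.e.\ it does not exclude explosion of $\bar X$; as written, your argument in effect assumes the conservativeness it is trying to prove.

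The paper closes this gap by working at the level of the successive-approximation scheme for the resolvents: writing $p_{ij}(\lambda)=\sum_n p^{(n)}_{ij}(\lambda)$ and $\bar p_{ij}(\lambda)=\sum_n\bar p^{(n)}_{ij}(\lambda)$ for the minimal solutions and checking by induction that $\bar p^{(n)}_{ij}(\lambda)=p^{(n)}_{ij}(\lambda)h(j)/h(i)$ for every $n$. This identifies the two minimal solutions directly, and only afterwards is harmonicity invoked to read off $\sum_j\bar p_{ij}(t)=P(t)h(i)/h(i)=1$, i.e.\ conservativeness. (A purely probabilistic repair is also available: the conditioned process has the same sample paths as $X$ up to absorption, hence the same jump chain and holding times as the minimal $\bar Q$-process, so it \emph{is} that process; but this must be argued, not inferred from honesty.) On the plus side, your proof of $\mathbb{P}_i[T_0^{(\bar X)}<\infty]=1$ via $\bar p_{i0}(t)=p_{i0}(t)/h(i)\uparrow 1$ is correct once the identity for $\bar p$ is in place, and is considerably shorter than the paper's route through the criterion of \cite{MYZ22} and the explicit formula for $\bar G_k^{(N)}$.
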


\begin{proof}
(1) Firstly, we prove that $P(t)h(i)=h(i)$ for $h(i)=\mathbb{P}_i[T_0<T_\partial]$. Obviously, $h(0)=1$. Since $0$ is an absorbing state for $X$, we have that for any $i,j\geq 1$,
\begin{equation*}
	p_{ij}(t)=\mathbb{P}_i[X_{t\wedge T_0}=j, t\wedge T_0<T_\partial].
\end{equation*} 
By using strong Markov property, we obtain that
\begin{align*}
	P(t)h(i)&=\mathbb{E}_i[h(X_{t\wedge T_0})\mathbf{1}_{[t\wedge T_0<T_\partial]}]\\
	&=\mathbb{E}_i[h(0) \mathbf{1}_{[T_0<t, T_0<T_\partial]}]+\mathbb{E}_i[h(X_t)\mathbf{1}_{[t<T_0\wedge T_\partial]}]\\
	&=\mathbb{P}_i[T_0<t,T_0<T_\partial]+\mathbb{E}_i[\mathbb{E}_i[\mathbf{1}_{[T_0<T_\partial]}|X_t]\mathbf{1}_{[t<T_0\wedge T_\partial]}]\\
	&=\mathbb{P}_i[T_0<t,T_0<T_\partial]+\mathbb{P}_i[t<T_0<T_\partial]\\
	&=\mathbb{P}_i[T_0<T_\partial]=h(i).
\end{align*}
So $h$ is the harmonic function for $X$. The second equality is derived from \eqref{h(i)} directly.

(2) Secondly, we prove \eqref{bar{P}(t)} by using the second successive approximation scheme. For any $\lambda>0$, let 
\begin{equation*}
	p_{ij}(\lambda)=\int_{0}^{\infty} e^{-\lambda t} p_{ij}(t) \d t,
\end{equation*}
and 
\begin{equation*}
	\bar{p}_{ij}(\lambda)=\int_{0}^{\infty} e^{-\lambda t}\bar{p}_{ij}(t) \d t,
\end{equation*}
where $\bar{p}_{ij}(t)$ denote the transition function for the minimal process corresponding to $\bar{Q}$.
It remains to prove that
\begin{equation*}
	\bar{p}_{ij}(\lambda)=p_{ij}(\lambda)\frac{h(j)}{h(i)}.
\end{equation*}
Since the minimal process is the unique solution to the backward equation, we obtain that
\begin{equation*}
	p_{ij}(\lambda)=\sum_{k\neq i} \frac{q_{ik}}{\lambda+q_i} p_{kj}(\lambda)+\frac{\delta_{ij}}{\lambda+q_i},
\end{equation*}
and 
\begin{equation*}
	\bar{p}_{ij}(\lambda)=\sum_{k\neq i} \frac{q_{ik}}{\lambda+q_i}\frac{h(k)}{h(i)} \bar{p}_{kj}(\lambda)+\frac{\delta_{ij}}{\lambda+q_i}.
\end{equation*}
By the second successive approximation scheme, let
\begin{equation*}
	p_{ij}^{(1)}(\lambda)=\frac{\delta_{ij}}{\lambda+q_i}, \qquad p_{ij}^{(n+1)}(\lambda) =\sum_{k\neq i} \frac{q_{ik}}{\lambda+q_i} p^{(n)}_{kj}(\lambda),
\end{equation*}
and 
\begin{equation*}
	\bar{p}_{ij}^{(1)}(\lambda)=\frac{\delta_{ij}}{\lambda+q_i}, \qquad \bar{p}_{ij}^{(n+1)}(\lambda) =\sum_{k\neq i} \frac{q_{ik}}{\lambda+q_i} \frac{h(k)}{h(i)} \  \bar{p}^{(n)}_{kj}(\lambda).
\end{equation*}
By induction, it is easy to show that for any $n\geq 0$,
\begin{equation*}
	\bar{p}^{(n)}_{ij}(\lambda)=p^{(n)}_{ij}(\lambda) \frac{h(j)}{h(i)}.
\end{equation*}
Then it follows that 
\begin{equation*}
	\bar{p}_{ij}(\lambda)=\sum_{n=0}^{\infty}\  \bar{p}^{(n)}_{ij}(\lambda)=\sum_{n=0}^{\infty} p^{(n)}_{ij}(\lambda) \frac{h(j)}{h(i)}=p_{ij}(\lambda) \frac{h(j)}{h(i)},
\end{equation*}
which implies \eqref{bar{P}(t)}.

(3) Finally, we prove that $\bar{X}$ is unique and conservative on $E$ and it holds that for any $i\geq 1$,
$\mathbb{P}_i[T_0^{(\bar{X})}<\infty]=1$.

Firstly, it follows from $P(t)h=h$ that $\sum_{j=0}^{\infty}\bar{p}_{ij}(t)=\sum_{j=0}^{\infty} p_{ij}(t)\frac{h(j)}{h(i)}=1$, which implies that the minimal process is conservative. So $\bar{X}$ is unique.

Secondly, according to \cite[Theorem 1.5(1)]{MYZ22}, it remains to prove that
\begin{equation}\label{killing}
	\lim_{N\rightarrow\infty} \frac{\bar{G}_1^{(N)}}{\sum_{k=1}^{N} \bar{G}_k^{(N)}}=0,
\end{equation}
where $\bar{G}_k^{(N)}(q)$ is defined by replacing $q_{ij}$ with $\bar{q}_{ij}$ and taking $\omega \equiv q$ in \eqref{G_omega} and setting $\bar{G}_k^{(N)}=\bar{G}_k^{(N)}(0)$. 

We claim that for any $N\geq k\geq 1$ and any $q\in \mathbb{R}$,
\begin{equation}\label{bar(G)}
	\bar{G}_k^{(N)}(q)=\frac{q_{N,N-1}W^{(q+c)}(k-1,N)Z^{(c)}(N-1,\infty)}{Z^{(c)}(k-1,\infty)}-\frac{q_{N,N-1}W^{(q+c)}(k,N)Z^{(c)}(N-1,\infty)}{Z^{(c)}(k,\infty)}.
\end{equation}
We prove this claim by induction. For any $n\geq 1$, $\bar{G}_n^{(n)}(q)=1$, so \eqref{bar(G)} holds for $N=k$.
Assume for any $n\geq 1$ and any $0\leq i\leq k$, \eqref{bar(G)} holds, that is,
\begin{align*}
	\bar{G}_n^{(n+i)}(q)&=\frac{q_{n+i,n+i-1}W^{(q+c)}(n-1,n+i)Z^{(c)}(n+i-1,\infty)}{Z^{(c)}(n-1,\infty)}\\
	&\qquad -\frac{q_{n+i,n+i-1}W^{(q+c)}(n,n+i)Z^{(c)}(n+i-1,\infty)}{Z^{(c)}(n,\infty)},
\end{align*}
and we prove for $i=k+1$. Since $P(t)h=h$, we have $Qh=0$. It follows that for any $k\geq n+1$,
\begin{equation*}
	\bar{q}_n^{(k)}(q)=\sum_{j=k}^{\infty} \bar{q}_{nj}+q=\sum_{j=k}^{\infty} q_{nj} \frac{h(j)}{h(n)}+q=q_n+q- \frac{q_{n,n-1} h(n-1)}{h(n)}-\sum_{j=n+1}^{k-1} q_{nj}\frac{h(j)}{h(n)}.
\end{equation*}
According to the definition of $\bar{G}_n^{(n+k+1)}(q)$, we have
\begin{align*}
	\bar{G}_n^{(n+k+1)}(q)&=\sum_{l=n+1}^{n+k+1}\frac{ \bar{q}_n^{(l)}(q) \bar{G}_l^{(n+k+1)}(q) }{\bar{q}_{n,n-1}}\\
	&=\sum_{l=n+1}^{n+k+1} \frac{(q_n+q)h(n)}{q_{n,n-1}h(n-1)} \bar{G}_l^{(n+k+1)}(q)-\sum_{l=n+1}^{n+k+1} \bar{G}_l^{(n+k+1)}(q)\\
	&\qquad -\sum_{l=n+1}^{n+k+1} \sum_{j=n+1}^{l-1} \frac{q_{nj} h(j)}{q_{n,n-1} h(n-1)} \bar{G}_l^{(n+k+1)}(q).
\end{align*}
By assumption, it follows that
\begin{align*}
	\bar{G}_n^{(n+k+1)}(q)&=\frac{(q_n+q)q_{n+k+1,n+k}W^{(q+c)}(n,n+k+1)h(n+k)}{q_{n,n-1}h(n-1)}\\
	&\qquad -\frac{q_{n+k+1,n+k}W^{(q+c)}(n,n+k+1)h(n+k)}{h(n)}\\
	&\qquad -\sum_{l=n+1}^{n+k} \frac{q_{nl}q_{n+k+1,n+k}W^{(q+c)}(l,n+k+1)h(n+k)}{q_{n,n-1}h(n-1)}.
\end{align*}
So it remains to prove that
\begin{equation*}
	W^{(q+c)}(n-1,n+k+1)=\frac{(q_n+q) W^{(q+c)}(n,n+k+1)}{q_{n,n-1}}-\sum_{l=n+1}^{n+k} \frac{q_{nl}W^{(q+c)}(l,n+k+1)}{q_{n,n-1}},
\end{equation*}
by definition, which is equivalent to 
\begin{align}\label{G_n+k+1}
	&\sum_{j=n}^{n+k+1} G_j^{(n+k+1)}(q+c) \nonumber \\
	&=\frac{(q_n+q)}{q_{n,n-1}}\sum_{j=n+1}^{n+k+1}G_j^{(n+k+1)}(q+c)-\frac{1}{q_{n,n-1}}\sum_{l=n+1}^{n+k}\sum_{j=l+1}^{n+k+1} q_{nl}G_j^{(n+k+1)}(q+c).
\end{align}
Indeed, we have
\begin{align*}
	&\sum_{j=n}^{n+k+1} G_j^{(n+k+1)}(q+c)\\
	&=1+\sum_{j=n}^{n+k} \sum_{l=j+1}^{n+k+1} \frac{q_j^{(l)}(q+c)G_l^{(n+k+1)}(q+c)}{q_{j,j-1}}\\
	&=1+\sum_{l=n+1}^{n+k+1}\frac{q_n^{(l)}(q+c)G_l^{(n+k+1)}(q+c)}{q_{n,n-1}}+\sum_{j=n+1}^{n+k} \sum_{l=j+1}^{n+k+1} \frac{q_j^{(l)}(q+c)G_l^{(n+k+1)}(q+c)}{q_{j,j-1}}\\
	&=1+\frac{(q_n+q)\sum_{l=n+1}^{n+k+1}G_l^{(n+k+1)}(q+c)}{q_{n,n-1}}-\sum_{l=n+1}^{n+k+1}\sum_{j=n+1}^{l-1}\frac{q_{nj} G_l^{(n+k+1)}(q+c)}{q_{n,n-1}}\\
	&\qquad -\sum_{l=n+1}^{n+k+1} G_l^{(n+k+1)}(q+c)+\sum_{j=n+1}^{n+k} \sum_{l=j+1}^{n+k+1} \frac{q_j^{(l)}(q+c)G_l^{(n+k+1)}(q+c)}{q_{j,j-1}}\\
	&=\frac{(q_n+q)\sum_{l=n+1}^{n+k+1}G_l^{(n+k+1)}(q+c)}{q_{n,n-1}}-\sum_{j=n+1}^{n+k}\sum_{l=j+1}^{n+k+1}\frac{q_{nj} G_l^{(n+k+1)}(q+c)}{q_{n,n-1}},
\end{align*}
where the last equality follows from $G_{n+k+1}^{(n+k+1)}(q+c)=1$ and
\begin{equation*}
	G_j^{(n+k+1)}(q+c)=\sum_{l=j+1}^{n+k+1} \frac{q_j^{(l)}(q+c)G_l^{(n+k+1)}(q+c)}{q_{j,j-1}}. 
\end{equation*}
So \eqref{G_n+k+1} holds and we complete the proof of \eqref{bar(G)}.

It follows from \eqref{bar(G)} that, 
\begin{equation*}
	\frac{\bar{G}_1^{(N)}}{\sum_{k=1}^{N} \bar{G}_k^{(N)}}=1-\frac{W^{(c)}(1,N)Z^{(c)}(0,\infty)}{W^{(c)}(0,N)Z^{(c)}(1,\infty)},
\end{equation*}
which implies \eqref{killing} by using \eqref{h(i)}. So we complete the proof of this proposition.
\end{proof}

Now we turn to the proof of Theorem \ref{exi_uni}.

\begin{proof}[Proof of Theorem \ref{exi_uni}]
Firstly, we prove the necessity. According to \cite[Proposition 2.4]{CMS13}, if there exists a QSD for $X$ in $E_+$, then $\lambda_0^{(X)}>0$.

Secondly, we prove the sufficiency. According to \cite[Theorem 2.1]{CMS13}, a probability measure $\nu$ (resp. $\bar{\nu}$) on $E_+$ is a QSD for $X$ (resp. $\bar{X}$) in $E_+$ if and only if there exists $\lambda=\lambda(\nu)>0$ (resp. $\lambda=\lambda(\bar{\nu})>0$) such that for any $i,j\geq 1$ and any $t\geq 0$,
\begin{equation*}
	\sum_{i=1}^{\infty} \nu_i p_{ij}(t)=e^{-\lambda t}\nu_j.
\end{equation*}
(resp.
\begin{equation*}
	\sum_{i=1}^{\infty} \bar{\nu}_i \bar{p}_{ij}(t)=e^{-\lambda t}\bar{\nu}_j.)
\end{equation*}
According to \eqref{bar{P}(t)} and for any $i\geq 1$, 
\begin{equation*}
	\frac{1}{Z^{(c)}(0,\infty)}\leq h(i)\leq 1,
\end{equation*}
if $\nu$ is a QSD for $X$, then $\bar{\nu}_i=\frac{\nu_i h(i)}{\sum_{j=1}^{\infty}\nu_j h(j)}$ is a QSD for $\bar{X}$,
and if $\bar{\nu}$ is a QSD for $\bar{X}$, then $\nu_i=\frac{\bar{\nu}_i/h(i)}{\sum_{j=1}^{\infty} \bar{\nu}_j/h(j)}$ is a QSD for $X$.
If $\lambda_0^{(X)}>0$, according to \cite[Proposition 2.4]{CMS13}, 
\begin{equation}\label{lam_com}
	\lambda_0^{(X)}=\liminf_{t\rightarrow \infty}-\frac{1}{t}\log \mathbb{P}_i[T_0>t]\leq \lim_{t\rightarrow+\infty} -\frac{1}{t}\log(p_{ij}(t))=\lambda_0.
\end{equation}
According to \cite[Remark 2.6]{CJL17}, 
\begin{equation*}
	\lambda_0=\sup\{r\geq 0: \exists \text{ positive column } \{f(i), i\in E_+\}, s.t. \  Qf\leq -rf\},
\end{equation*}
where $Qf(i)=\sum_{j=1}^{\infty}q_{ij}f(j)$,
and the exponential decay rate of $\bar{X}$, defined as
\begin{equation*}
	\bar{\lambda}_0=\lim_{t\rightarrow +\infty} -\frac{1}{t} \log(\bar{p}_{ij}(t)),
\end{equation*}
for any $i,j\geq 1$, satisfies 
\begin{equation*}
	\bar{\lambda}_0=\sup\{r\geq 0: \exists \text{ positive column } \{f(i), i\in E_+\}, \text{s.t.} \  \bar{Q}f\leq -rf\}.
\end{equation*}
So $\bar{\lambda}_0=\lambda_0>0$, which implies that the QSD of $\bar{X}$ exists, and then the QSD for $X$ exists.

\end{proof}

\begin{rem}
(1) We claim that $\lambda_0=\lambda_0^{(X)}$. Indeed, from \eqref{lam_com}, if $\lambda_0=0$, then $\lambda_0^{(X)}=0$. If $\lambda_0>0$, according to the proof of Theorem \ref{exi_uni}, there exists a QSD for $X$ satisfying $\mathbb{P}_\nu[T_0>t]=e^{-\lambda_0 t}$, which implies that $\lambda_0^{(X)}\geq \lambda_0$. So $\lambda_0^{(X)}=\lambda_0$.

(2) Assume $\sum_{u=1}^{\infty} c_u W(0,u)<\infty$ and $Y$ satisfies that for any $i\geq 1$, $\mathbb{P}_i[T_0^{(Y)}<\infty]=1$. 
We claim that the exponential decay parameter of $X$ exceeds that of $Y$, that is
\begin{equation*}
	\lambda_0^{(X)}\geq \lambda_0^{(Y)}=\sup\{\lambda\geq 0: \mathbb{E}_i[e^{\lambda T_0^{(Y)}}]<\infty\}=-\lim_{t\rightarrow\infty}\frac{1}{t} \log(p_{ij}^{(Y)}(t)),
\end{equation*}
for any $i,j\geq 1$, where the second equality follows from \cite[Proposition 4.12]{CMS13}. 

Actually, by using \cite[Remark 2.6]{CJL17}, we have
\begin{equation*}
\lambda_0^{(Y)}=\sup\{r\geq 0: \exists \text{ positive column } \{f(i), i\in E_+\}, \text{s.t.} \  Q^{(Y)}f\leq -rf\}.
\end{equation*}
It is easy to show that for any $i\in E_+$  and any positive column $f$ on $E_+$ satisfying $Qf(i)<\infty$, 
\begin{equation*}
Qf(i)=\sum_{j=1}^{\infty}q_{ij}f(j)=\sum_{j=1}^{\infty} q_{ij}^{(Y)} f(j)-c(i)f(i)\leq Q^{(Y)}f(i),
\end{equation*}
which completes the proof of this claim.

So if $Y$ is exponential decay, then the QSD for $X$ exists.
\end{rem}

We are now going to prove Theorem \ref{exp_con}. According to \cite[Theorem 2.1-2.3]{V18}, we obtain the following lemma, which is crucial in the proofs of Theorems \ref{exp_con} and \ref{large_killing}.

\begin{lem}\label{exp_suf}
If there exists a probability measure $\zeta$ on $E_+$ and a constant $k\geq 1$ such that:

(C1) For any $l\geq 1$, there exists $L> l$, $c,t>0$ such that:
\begin{equation*}
	\text{ for any } 1\leq x\leq l, \mathbb{P}_x[X_t\in \d x, t<T_0\wedge T_\partial \wedge T_{L+}]\geq c\zeta(\d x).
\end{equation*}

(C2) Let
\begin{equation*}
	\rho_S=\sup\{\rho\in \mathbb{R}:\sup_{l\geq 1} \inf_{t\geq 0} e^{\rho t} \mathbb{P}_\zeta[t<T_0\wedge T_\partial \wedge T_{l+}]=0 \}.
\end{equation*}
There exists $\rho>\rho_S$ such that:
\begin{equation*}
	\sup_{x>k} \mathbb{E}_x[\exp(\rho (T_k \wedge T_\partial))]<\infty.
\end{equation*}

(C3) It holds that
\begin{equation*}
	\limsup_{t\rightarrow \infty} \sup_{1\leq x\leq k} \frac{\mathbb{P}_x[t<T_0\wedge T_\partial]}{\mathbb{P}_\zeta[t<T_0\wedge T_\partial]} <\infty.
\end{equation*}

Then there exists a unique QSD $\nu$ for $X$ and for any initial distribution $\mu$ on $E_+$, there exist constants $C(\mu)$, $\gamma>0$ such that for any $t>0$,
\begin{equation*}
	||\mathbb{P}_\mu[X_t\in \cdot| t<T_0\wedge T_\partial]-\nu||_{TV}\leq C(\mu)e^{-\gamma t}.
\end{equation*}
\end{lem}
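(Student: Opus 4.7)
The plan is to recognize that Lemma \ref{exp_suf} is essentially a reformulation of \cite[Theorems 2.1--2.3]{V18} specialised to the countable-state, skip-free setting, so the task reduces to matching (C1)--(C3) with Velleret's three structural hypotheses and then verifying that the ambient regularity requirements hold automatically in our context.

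First I would set up the correspondence with Velleret's framework. He works with a general Markov process on a Polish state space together with a cemetery, a reference probability measure (our $\zeta$), and an exhaustion of the state space by ``reasonable sets'', which here are the finite subsets $\{1,\dots,l\}$ whose complements are entered at time $T_{l+}$. His three hypotheses take the form: a local Dobrushin-type minorization of the sub-Markov kernel restricted to a reasonable set, an exponential escape condition with rate strictly larger than the survival rate of $\zeta$, and a comparison between survival probabilities from the reasonable set and from $\zeta$. One then sees that (C1) is exactly the local Dobrushin minorization on $\{1,\dots,l\}$ restricted to trajectories that have not yet exited via $T_{L+}$; that $\rho_S$ as defined inside (C2) is Velleret's survival rate of $\zeta$ (so (C2) is his escape condition); and that (C3) is his comparison of survival probabilities on the compact set $\{1,\dots,k\}$.

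Second, I would verify the ambient regularity required by \cite{V18}. The process $X$ is a pure-jump Markov chain on the countable space $E\cup\{\partial\}$, so measurability and the strong Markov property are built in; the hypothesis $\mathbb{P}_i[T_0^{(Y)}<\infty]=1$ for every $i\geq 1$, combined with the Doob-style construction of $X$ from $Y$ given at the start of Section~2, yields $\mathbb{P}_i[T_0\wedge T_\partial<\infty]=1$, ruling out explosion before absorption; and both $0$ and $\partial$ are honest absorbing states. With these in hand, Velleret's Theorem~2.1 provides existence of the QSD, Theorem~2.2 gives uniqueness together with exponential total-variation convergence from the reference measure $\zeta$, and Theorem~2.3 upgrades this to arbitrary initial distributions $\mu$ on $E_+$ at the cost of allowing the prefactor $C(\mu)$ to depend on $\mu$, while keeping the rate $\gamma>0$ universal.

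The main (and essentially only) obstacle is bookkeeping: one has to check that the ``$\sup_{l\geq 1}\inf_{t\geq 0}$'' definition of $\rho_S$ really coincides with the survival rate Velleret uses, and that the $\rho>\rho_S$ chosen in (C2) can simultaneously be taken larger than that survival rate and compatible with the exponential-moment bound for $T_k\wedge T_\partial$. Once this matching is written out, no further probabilistic argument is needed; the conclusion follows by applying \cite[Theorems 2.1--2.3]{V18} verbatim.
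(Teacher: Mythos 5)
Your proposal matches the paper exactly: the paper offers no independent argument for this lemma, stating only that it is obtained from \cite[Theorems 2.1--2.3]{V18}, which is precisely the citation-matching you carry out (identifying (C1)--(C3) with Velleret's minorization, escape, and survival-comparison hypotheses and checking the ambient regularity). Your write-up is in fact slightly more detailed than the paper's, but the route is the same.
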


Now we turn to the proof of Theorem \ref{exp_con}.

\begin{proof}[Proof of Theorem \ref{exp_con}]
(1) Firstly, we prove the uniqueness of QSD is equivalent to \eqref{unique}. 

It follows from the proof in Theorem \ref{exi_uni} that
the uniqueness of the QSD for $X$ is equivalent to the uniqueness of the QSD for $\bar{X}$. According to Section 3, the uniqueness of QSD for $\bar{X}$ is equivalent to 
\begin{equation}\label{uni}
	\sum_{i=1}^{\infty} \bar{W}(0,i)=\sum_{i=1}^{\infty} \frac{W^{(c)}(0,i)Z^{(c)}(i,\infty)}{Z^{(c)}(0,\infty)}<\infty,
\end{equation}
where $\bar{W}$ is defined by replacing $q_{ij}$ with $\bar{q}_{ij}$ in \eqref{W^omega} and the first equality follows from \eqref{bar(G)}.
Since for any $j\geq 0$, $1\leq Z^{(c)}(j,\infty)\leq Z^{(c)}(0,\infty)<\infty$, we arrive at
\begin{equation*}
	\sum_{i=1}^{\infty} W^{(c)}(0,i)\frac{Z^{(c)}(i,\infty)}{Z^{(c)}(0,\infty)}\leq \sum_{i=1}^{\infty} W^{(c)}(0,i)\leq \sum_{i=1}^{\infty} W^{(c)}(0,i)Z^{(c)}(i,\infty),
\end{equation*} 
which implies that \eqref{uni} is equivalent to $\sum_{j=1}^{\infty} W^{(c)}(0,j)<\infty$. According to \eqref{W_omega}, we have $W(0,j)\leq W^{(c)}(0,j)$, so 
\begin{equation*}
	\sum_{j=1}^{\infty} W(0,j)\leq \sum_{j=1}^{\infty} W^{(c)}(0,j)<\infty.
\end{equation*}
Since $A=\sum_{u=1}^{\infty} c_u W(0,u)<\infty$, from \eqref{W_c W}, we have
\begin{equation}\label{W W_c}
	\sum_{j=1}^{\infty} W^{(c)}(0,j)\leq \sum_{j=1}^{\infty} W(0,j)e^A\leq e^A \sum_{j=1}^{\infty}W(0,j)<\infty,
\end{equation}
which implies that $\sum_{j=1}^{\infty} W^{(c)}(0,j)<\infty$ is equivalent to \eqref{unique}.

(2) Secondly, we prove \eqref{unique} implies the exponential convergence by using Lemma \ref{exp_suf}. 
According to \eqref{W W_c}, we have
\begin{equation*}
	\sum_{u=1}^{\infty} W^{(c)}(0,u)=\sum_{u=1}^{\infty} \sum_{i=1}^{u} \frac{G_i^{(u)}(c)}{q_{u,u-1}}=\sum_{i=1}^{\infty} \sum_{u=i}^{\infty}\frac{G_i^{(u)}(c)}{q_{u,u-1}}<\infty.
\end{equation*}
So for any $\epsilon>0$, there exists $k_0\geq 1$ such that
\begin{equation*}
	\sum_{i=k_0+1}^{\infty} \sum_{u=i}^{\infty} \frac{G_i^{(u)}(c)}{q_{u,u-1}}=\sum_{u=k_0+1}^{\infty}\sum_{i=k_0+1}^{u} \frac{G_i^{(u)}(c)}{q_{u,u-1}}=\sum_{u=k_0+1}^{\infty} W^{(c)}(k_0,u)<\epsilon.
\end{equation*}
It follows from Theorem \ref{two side exit} that for any $x>k_0$,
\begin{align*}
	\mathbb{E}_x[T_{k_0}\wedge T_\partial]&=\sum_{y=k_0+1}^{\infty} \mathbb{E}_x\[\int_{0}^{T_{k_0}\wedge T_\partial} \mathbf{1}_{[X_t=y]} \d t\]\\
	&=\sum_{y=k_0+1}^{\infty} \(\frac{W^{(c)}(k_0,y)Z^{(c)}(x,\infty)}{Z^{(c)}(k_0,\infty)}-W^{(c)}(x,y)\)\\
	&\leq \sum_{y=k_0+1}^{\infty} W^{(c)}(k_0,y)<\epsilon.
\end{align*}
So 
\begin{equation*}
	\sup_{x>k_0} \mathbb{E}_x[T_{k_0}\wedge T_\partial]<\epsilon.
\end{equation*}
By Chebyshev inequality, 
\begin{equation*}
	\sup_{x>k_0} \mathbb{P}_x[T_{k_0}\wedge T_\partial>1]<\epsilon.
\end{equation*}
It follows from Markov property that for any $n\geq 1$,
\begin{equation*}
	\sup_{x>k_0} \mathbb{P}_x[T_{k_0}\wedge T_\partial>n]<\epsilon^n.
\end{equation*}
By using Fubini theorem, for any $x>k_0$, and $\rho>0$,
\begin{align*}
	\mathbb{E}_x[e^{\rho(T_{k_0}\wedge T_\partial)}]&=\int_{0}^{+\infty} \rho e^{\rho s} \mathbb{P}_x[T_{k_0}\wedge T_\partial>s] \d s+1\\
	&\leq \sum_{n=0}^{\infty} \rho e^{\rho (n+1)} \mathbb{P}_x[T_{k_0}\wedge T_\partial >n]+1\\
	&<\sum_{n=0}^{\infty} \rho e^{\rho(n+1)}\epsilon^n+1. 
\end{align*}
So for any $\rho >0$, we can take $\epsilon<e^{-\rho}$ and find $k_0$ large enough, then
\begin{equation*}
	\sup_{x>k_0} \mathbb{E}_x[e^{\rho (T_{k_0}\wedge T_\partial)}]<\infty.
\end{equation*}

From the irreducibility of $X$ and \cite[Section 3]{V18}, if $\zeta$ is a Dirac measure on $E_+$, then $\rho_S<q_1<\infty$. We take $\rho=q_1+1$ and find $k_0\geq 1$ such that $\sup_{x>k_0} \mathbb{E}_x[e^{(q_1+1) (T_{k_0}\wedge T_\partial)}]<\infty$.

According to Lemma \ref{exp_suf}, we just need to prove (C1)-(C3) by taking $k=k_0$ and $\zeta=\delta_{k_0}$. The above statement has proved (C2) and we are going to prove (C1) and (C3).

According to \cite[Lemma 1.1]{BH00}, since $X$ is irreducible on $E_+$, there exists a sequence $\{L_N\}_{N\geq 1} \subseteq E_+$ such that 
for any $t>0$ and $1\leq i,j\leq L_N-1$,
\begin{equation*}
	p_{ij}^{(L_N)}(t)=\mathbb{P}_i[X_t=j, t<T_0\wedge T_{L_N+}\wedge T_\partial]>0,
\end{equation*}
which implies (C1). 

For any $1\leq x\leq k_0$, by using Markov property,
\begin{align*}
	\mathbb{P}_{k_0}[T_0\wedge T_\partial >t]\geq \mathbb{P}_{k_0}[T_0\wedge T_\partial >t+1]&\geq \mathbb{P}_{k_0}[X_1=x, T_0\wedge T_\partial >t+1]\\
	&=\mathbb{P}_{k_0}[X_1=x] \mathbb{P}_x[T_0\wedge T_\partial >t].
\end{align*}
So we have
\begin{equation}\label{irre}
	\frac{\mathbb{P}_x[T_0\wedge T_\partial >t]}{\mathbb{P}_{k_0}[T_0\wedge T_\partial >t]}\leq \frac{1}{\mathbb{P}_{k_0}[X_1=x]}\leq \frac{1}{\inf_{1\leq x\leq k_0}\mathbb{P}_{k_0}[X_1=x]}<\infty,
\end{equation}
which implies (C3) and we complete the proof of Theorem \ref{exp_con}.
\end{proof}

Based on the proofs of Theorems \ref{exi_uni} and \ref{exp_con}, as well as Proposition \ref{no-killing unique} and Lemma \ref{QSD_Y}, we obtain the following corollary, which provides an explicit representation of the QSD for $X$.

\begin{cor}
Suppose the assumptions in Theorem \ref{exi_uni} holds. 

(1) If $\lambda_0^{(X)}=0$, then there exists no QSD for $X$.

(2) If $\lambda_0^{(X)}>0$ and $\sum_{u=1}^{\infty} W(0,u)=\infty$, then there exists a continuum of QSDs $\{\nu^\theta\}_{\theta\in (0,\lambda_0^{(X)}]}$ for $X$, where
\begin{equation*}
	\nu^{\theta}_i=\frac{W^{(c-\theta)}(0,i)}{\sum_{j=1}^{\infty}W^{(c-\theta)}(0,j)}, \qquad i\geq 1.
\end{equation*}

(3) If $\sum_{u=1}^{\infty}W(0,u)$, then $\lambda_0^{(X)}>0$ and there exists a unique QSD $\nu$ for $X$, 
where 
\begin{equation*}
	\nu_i=\frac{W^{(c-\lambda_0^{(X)})}(0,i)}{\sum_{j=1}^{\infty} W^{(c-\lambda_0^{(X)})}(0,j)}, \qquad i\geq 1.
\end{equation*}
\end{cor}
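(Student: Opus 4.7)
The plan is to reduce to the Doob $h$-transformed non-killing process $\bar{X}$ and then invoke the QSD theory for single death processes without killing. Part (1) follows immediately from \cite[Proposition 2.4]{CMS13}: the existence of any QSD for $X$ on $E_+$ forces $\lambda_0^{(X)}>0$, so the contrapositive gives (1).

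For parts (2) and (3) I would use the QSD bijection $\nu\leftrightarrow\bar{\nu}$ between $X$ and $\bar{X}$ on $E_+$ already established in the proof of Theorem \ref{exi_uni}, namely $\bar{\nu}_i = \nu_i h(i)/\sum_j \nu_j h(j)$ with inverse $\nu_i \propto \bar{\nu}_i/h(i)$, together with the identity $\bar{\lambda}_0 = \lambda_0 = \lambda_0^{(X)}$ from the remark after Theorem \ref{exi_uni} and the equivalence $\sum_u W(0,u)<\infty \Leftrightarrow \sum_u \bar{W}(0,u)<\infty$ proved inside Theorem \ref{exp_con}. Under these identifications, case (2) corresponds to the continuum regime of Lemma \ref{QSD_Y} applied to $\bar{X}$, which produces $\bar{\nu}^{\theta}_i = \theta\,\bar{W}^{(-\theta)}(0,i)$ for $\theta\in(0,\lambda_0^{(X)}]$; and case (3) corresponds to the unique-QSD regime of Proposition \ref{no-killing unique} applied to $\bar{X}$, giving the single choice $\theta=\lambda_0^{(X)}$. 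Note that in (3) the positivity $\lambda_0^{(X)}>0$ is automatic since $\sum_u W(0,u)<\infty$ implies $\lambda_0^{(Y)}>0$ via Proposition \ref{no-killing unique}, and $\lambda_0^{(X)}\geq \lambda_0^{(Y)}$ by the remark.

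The computational heart of the argument is to rewrite $\bar{W}^{(-\theta)}(0,i)$ in terms of the original $W$-quantities for $X$. Using formula \eqref{bar(G)} with $q=-\theta$ and the relation $\bar{q}_{i,i-1} = q_{i,i-1}\, Z^{(c)}(i-1,\infty)/Z^{(c)}(i,\infty)$, I expect the sum defining $\bar{W}^{(-\theta)}(0,i) = \sum_{k=1}^{i}\bar{G}_k^{(i)}(-\theta)/\bar{q}_{i,i-1}$ to telescope and yield the clean identity
\begin{equation*}
\bar{W}^{(-\theta)}(0,i) \;=\; h(i)\, W^{(c-\theta)}(0,i),
\end{equation*}
where $h(i)=Z^{(c)}(i,\infty)/Z^{(c)}(0,\infty)$. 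Plugging this into the QSD of $\bar{X}$ and inverting the bijection via $\nu_i\propto \bar{\nu}_i/h(i) = \theta\, W^{(c-\theta)}(0,i)$ before renormalizing then produces exactly the formulas displayed in (2) and (3).

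The main obstacle will be verifying the telescoping identity above: it hinges on the precise two-term structure in \eqref{bar(G)}, the cancellation of the common factor $q_{i,i-1}Z^{(c)}(i-1,\infty)$ against $\bar{q}_{i,i-1}$, and on the boundary values $W^{(c-\theta)}(i,i)=0$ and $Z^{(c)}(0,\infty)=Z^{(c)}(0,\infty)$. Concretely, one must check that $\sum_{k=1}^{i}\bigl[W^{(c-\theta)}(k-1,i)/Z^{(c)}(k-1,\infty) - W^{(c-\theta)}(k,i)/Z^{(c)}(k,\infty)\bigr]$ collapses to $W^{(c-\theta)}(0,i)/Z^{(c)}(0,\infty)$. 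Once this bookkeeping identity is in hand, the remaining ingredients — the QSD bijection, the equivalence of the summability conditions, and $\bar{\lambda}_0=\lambda_0^{(X)}$ — are all already available from the earlier results in the paper, so the proof should be short.
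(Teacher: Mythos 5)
Your proposal is correct and follows essentially the same route the paper intends: part (1) from \cite[Proposition 2.4]{CMS13}, and parts (2)--(3) by pushing Lemma \ref{QSD_Y} and Proposition \ref{no-killing unique} through the Doob $h$-transform bijection of Theorem \ref{exi_uni}, using $\bar{\lambda}_0=\lambda_0^{(X)}$ and the summability equivalence from Theorem \ref{exp_con}. The telescoping identity $\bar{W}^{(-\theta)}(0,i)=h(i)W^{(c-\theta)}(0,i)$ you single out does hold (it is the $q=-\theta$ analogue of the $q=0$ computation in \eqref{uni}, and the sum collapses since $W^{(c-\theta)}(i,i)=0$), so no gap remains.
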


In recent years, there has been an increasing amount of literature on the study of one-dimensional Markov processes at their boundaries. 
This study provides key insights into the behavior of processes when their initial values are sufficiently large and has extensive applications in population modeling. The boundary is called an entrance boundary if the dynamics inhibit the process from hitting $\infty$ in finite time and permit the process to ``start from infinity". Consequently, the process is said to comes down from infinity. 

Coming down from infinity is also an important property in the study of QSDs for birth and death processes and diffusion processes, of which the characterizations have been given in \cite{A91} and \cite{CCLMMS09} respectively. Recall that in Proposition \ref{no-killing unique}, we call a single death process without killing $Y$ comes down from infinity if there exists $t_0>0$ such that
\begin{equation*}
	\lim_{x\rightarrow\infty}\mathbb{P}_x[T_0^{(Y)}<t_0]>0,
\end{equation*} 
which is consistent with ``coming down from infinity instantaneously" defined in \cite{FLZ20} and equivalent to $\sum_{u=1}^{\infty}W(0,u)<\infty$. According to Proposition \ref{no-killing unique}, this property determines the uniqueness of QSD.

In \cite{CV17}, a comparable concept referred to as ``coming down from infinity before killing" is introduced and employed to provide a sufficient condition for the uniform exponential convergence in the total variation norm to a QSD for diffusion processes with killing. Similar to \cite{CV17}, we call a single death process with killing $X$ comes down from infinity before killing if there exists $t_0>0$ such that
\begin{equation*}
	\lim_{x\rightarrow\infty} \mathbb{P}_x[T_0<t_0\wedge T_\partial]>0.
\end{equation*}

The following proposition provides a sufficient and necessary condition for $X$ coming down from infinity before killing.

\begin{prop}
Suppose the non-killing process $Y$ corresponding to $X$ satisfies $\mathbb{P}_i[T_0^{(Y)}<\infty]=1$ for any $i\geq 1$. Then $X$ comes down from infinity before killing if and only if $\sum_{u=1}^{\infty}(1+c_u)W(0,u)<\infty$.
\end{prop}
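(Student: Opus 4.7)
The plan is to sandwich $\mathbb{P}_x[T_0 < t_0 \wedge T_\partial]$ between quantities governed separately by $\sum_u W(0,u)$ and $\sum_u c_u W(0,u)$, using the key observation (immediate from the construction of $X$ by time-changing $Y$ with an exponential killing clock) that $T_0 = T_0^{(Y)}$ on $\{T_0 < T_\partial\}$. Writing $h(x) = \mathbb{P}_x[T_0 < T_\partial]$ as in Proposition \ref{h process}, the upper bound
\[
\mathbb{P}_x[T_0 < t_0 \wedge T_\partial] \leq \min\{\mathbb{P}_x[T_0^{(Y)} < t_0],\ h(x)\}
\]
is trivial, and decomposing $\{T_0 < T_\partial\}$ at time $t_0$ together with the identity $T_0 = T_0^{(Y)}$ on $\{T_0 < T_\partial\}$ yields the matching lower bound
\[
\mathbb{P}_x[T_0 < t_0 \wedge T_\partial] = h(x) - \mathbb{P}_x[t_0 \leq T_0 < T_\partial] \geq h(x) - \mathbb{P}_x[T_0^{(Y)} \geq t_0].
\]

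For the necessity direction, assume $\liminf_{x\to\infty} \mathbb{P}_x[T_0 < t_0 \wedge T_\partial] > 0$ for some $t_0 > 0$. The upper bound forces both $\liminf_x \mathbb{P}_x[T_0^{(Y)} < t_0] > 0$ and $\liminf_x h(x) > 0$. The first condition says that $Y$ comes down from infinity, so $\sum_u W(0,u) < \infty$ by Proposition \ref{no-killing unique}; the second is the ``small killing'' condition, which gives $\sum_u c_u W(0,u) < \infty$ by Proposition \ref{small killing}. Adding the two conditions yields $\sum_u (1+c_u) W(0,u) < \infty$.

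For the sufficiency direction, assume $\sum_u (1+c_u) W(0,u) < \infty$. By Proposition \ref{no-killing unique}, $S := \sup_{x\geq 1} \mathbb{E}_x[T_0^{(Y)}] = \sum_u W(0,u) < \infty$, so Markov's inequality gives $\mathbb{P}_x[T_0^{(Y)} \geq t_0] \leq S/t_0$. Formula \eqref{h(i)} gives $h(x) = Z^{(c)}(x,\infty)/Z^{(c)}(0,\infty)$; using the pointwise estimate $W^{(c)}(x,u) \leq W^{(c)}(0,u)$ together with $\sum_u c_u W^{(c)}(0,u) < \infty$ (which follows from \eqref{W_c W}), dominated convergence applied to the series $Z^{(c)}(x,\infty) = 1 + \sum_{u > x} c_u W^{(c)}(x,u)$ yields $Z^{(c)}(x,\infty) \to 1$, hence $h(x) \to 1/Z^{(c)}(0,\infty) > 0$. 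Substituting into the lower bound and choosing $t_0 > 2 S \cdot Z^{(c)}(0,\infty)$ produces $\liminf_{x\to\infty} \mathbb{P}_x[T_0 < t_0 \wedge T_\partial] \geq 1/(2 Z^{(c)}(0,\infty)) > 0$.

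The only nontrivial step is the lower bound, which relies on the identity $T_0 = T_0^{(Y)}$ on $\{T_0 < T_\partial\}$ to replace the killed-process tail $\mathbb{P}_x[t_0 \leq T_0 < T_\partial]$ by the purely non-killed tail $\mathbb{P}_x[T_0^{(Y)} \geq t_0]$; once this substitution is made, the two summability hypotheses supply the two needed ingredients, namely $S < \infty$ for the Markov tail and a positive limit for $h$, cleanly and independently.
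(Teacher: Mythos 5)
Your proof is correct, and while the sufficiency direction essentially reproduces the paper's argument (the paper bounds $\lim_x \mathbb{P}_x[T_0<t_0\wedge T_\partial]$ from below by $\lim_x\mathbb{P}_x[T_0<T_\partial]-\lim_x\mathbb{P}_x[T_0^{(Y)}>t_0]$, exactly your decomposition, choosing $t_0$ via the monotone-convergence step in Proposition \ref{no-killing unique} rather than your Markov-inequality bound $S/t_0$), your necessity direction takes a genuinely different and more modular route. The paper proves necessity self-containedly: from $\lim_x\mathbb{E}_x[\exp(-\int_0^{T_0^{(Y)}}(c(Y_s)+1)\,\d s)]>0$ it builds a single time-changed process $\tilde Y$ with rates $q^{(Y)}_{ij}/(1+c_i)$, for which $\tilde W(0,u)=(1+c_u)W(0,u)$, and applies Proposition \ref{no-killing unique} to $\tilde Y$ once. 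You instead split the conclusion into its two summands via the upper bound $\mathbb{P}_x[T_0<t_0\wedge T_\partial]\leq\min\{\mathbb{P}_x[T_0^{(Y)}<t_0],\,h(x)\}$, extracting $\sum_u W(0,u)<\infty$ from Proposition \ref{no-killing unique} and $\sum_u c_uW(0,u)<\infty$ from Proposition \ref{small killing}. This is shorter and arguably more transparent, but note that Proposition \ref{small killing} is stated and proved \emph{after} this proposition in the paper, and its proof is itself a time-change argument (on $E_c$ with additive functional $\int_0^t c(Y_s)\,\d s$) of the same flavor as the one you are avoiding; there is no logical circularity, since that proof does not use the present proposition, but your argument effectively relocates rather than eliminates the time-change machinery, and the exposition order would need to be adjusted. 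Two small points you gloss over, both harmless: the definition requires $\lim_{x\to\infty}$ rather than $\liminf$, which is justified because $\mathbb{P}_x[T_0<t_0\wedge T_\partial]$ and $h(x)$ are non-increasing in $x$ by the skip-free property and the strong Markov property; and your claim $Z^{(c)}(x,\infty)\to 1$ is stronger than needed, since $h(x)\geq 1/Z^{(c)}(0,\infty)$ already follows from $Z^{(c)}(x,\infty)\geq 1$.
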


\begin{proof}
(1) Firstly, we show that if $\sum_{u=1}^{\infty} (1+c_u)W(0,u)<\infty$, then $X$ comes down from infinity before killing.

According to \eqref{h(i)}, since $\sum_{u=1}^{\infty}c(u)W(0,u)<\infty$, we get that
\begin{equation*}
	\lim_{x\rightarrow\infty} \mathbb{P}_x[T_0<T_\partial]\geq \frac{1}{Z^{(c)}(0,\infty)}>0.
\end{equation*}
From the construction of $X$ by $Y$ and $c$, we have
\begin{equation*}
	\mathbb{P}_x[T_0<t\wedge T_\partial]
	=\mathbb{P}_x[T_0^{(Y)}<t,T_0^{(Y)}<T_\partial].
\end{equation*}
By taking $t_0=t_{1/Z^{(c)}(0,\infty)}$ in the first part of the proof of Proposition \ref{no-killing unique}, we have
\begin{equation*}
	\lim_{x\rightarrow\infty} \mathbb{P}_x[T_0^{(Y)}>t_0]<\frac{1}{Z^{(c)}(0,\infty)},
\end{equation*}
which implies that
\begin{equation*}
	\lim_{x\rightarrow\infty} \mathbb{P}_x[T_0<t_0\wedge T_\partial]\geq \lim_{x\rightarrow\infty} \mathbb{P}_x[T_0<T_\partial]-\lim_{x\rightarrow\infty}\mathbb{P}_x[T_0^{(Y)}>t_0]>0.
\end{equation*}
So we complete the proof of the sufficiency.

(2) Secondly, we show that if $X$ comes down from infinity before killing, then $\sum_{u=1}^{\infty}(1+c_u)W(0,u)<\infty$.

According to the construction of $X$ by $Y$ and $c$, we have
\begin{equation*}
	\mathbb{P}_x[T_0<t_0\wedge T_\partial]=\mathbb{E}_x\[\exp\(- \int_{0}^{T_0^{(Y)}} c(Y_s) \d s\)\mathbf{1}_{[T_0^{(Y)}<t_0]} \].
\end{equation*}
So 
\begin{align*}
	&\mathbb{E}_x \[\exp\(-\int_{0}^{T_0^{(Y)}} (c(Y_s)+1) \d s  \)\]\\
	&\geq \mathbb{E}_x \[ \exp\( -\int_{0}^{T_0^{(Y)}} (c(Y_s)+1) \d s \) \mathbf{1}_{[T_0^{(Y)}<t_0]} \]\\
	&\geq e^{-t_0} \mathbb{E}_x\[\exp\(- \int_{0}^{T_0^{(Y)}} c(Y_s) \d s\)\mathbf{1}_{[T_0^{(Y)}<t_0]} \],
\end{align*}
which implies that
\begin{equation*}
	\lim_{x\rightarrow\infty} \mathbb{E}_x \[\exp\(-\int_{0}^{T_0^{(Y)}} (c(Y_s)+1) \d s  \)\]>0.
\end{equation*}

We define a time-changed process of $Y$. Let 
\begin{equation*}
	A_t=\int_{0}^{t}c(Y_s)\d s+t, \qquad \gamma_t=\inf\{s\geq 0: A_s>t\}, \qquad t\geq 0.
\end{equation*}
Define
\begin{equation*}
	\tilde{Y}_t=Y_{\gamma_t}, \qquad t\geq 0.
\end{equation*}
Then $\tilde{Y}$ is a conservative single death process on $E$ with transition rate matrix $\tilde{Q}=\(\tilde{q}_{ij}\)_{i,j\geq 0}$, where $\tilde{q}_{ij}=\frac{q_{ij}^{(Y)}}{c(i)+1}$. According to \eqref{W^omega}, we define $\tilde{W}(i,j)$ by replacing $q_{ij}$ with $\tilde{q}_{ij}$. Then $\tilde{W}(0,u)=(1+c(u))W(0,u)$. 

Given that the embedding chain for $\tilde{Y}$ is identical to that for $Y$, we have $\mathbb{P}_i[\tilde{T}_0<\infty]=1$ for any $i\geq 1$, where $\tilde{T}_0=\inf\{t\geq 0: \tilde{Y}_t=0\}$. Then
\begin{equation*}
	\mathbb{E}_x[e^{-\tilde{T}_0}]=\mathbb{E}_x\[\exp\(-\int_{0}^{T_0} (c(Y_s)+1) \d s\)\].
\end{equation*}
So $\lim_{x\rightarrow\infty} \mathbb{E}_x[e^{-\tilde{T}_0}]>0$. By Fubini theorem,
\begin{equation*}
	\mathbb{E}_x[e^{-\tilde{T}_0}]=1-\int_{0}^{\infty} e^{-s} \mathbb{P}_x[\tilde{T}_0>s] \d s=\int_{0}^{\infty} e^{-s}\mathbb{P}_x[\tilde{T}_0<s] \d s.
\end{equation*}
By using the monotonicity of $\mathbb{P}_\cdot [\tilde{T}_0>t]$, we have
\begin{equation*}
	\lim_{x\rightarrow\infty}\int_{0}^{\infty} e^{-s}\mathbb{P}_x[\tilde{T}_0<s] \d s=\int_{0}^{\infty} e^{-s} \lim_{x\rightarrow\infty} \mathbb{P}_x[\tilde{T}_0<s]\d s>0.
\end{equation*}
So there exists $s_0>0$ such that $\lim_{x\rightarrow\infty} \mathbb{P}
_x[\tilde{T}_0<s_0]>0$. From the proof of Proposition \ref{no-killing unique}, we have
\begin{equation*}
	\sum_{u=1}^{\infty} \tilde{W}(0,u)=\sum_{u=1}^{\infty} (1+c(u))W(0,u)<\infty.
\end{equation*}
This completes the proof of this proposition.
\end{proof}

Based on an argument similar to the former proof, we obtain the probabilistic interpretation of the ``small killing" condition: $\sum_{u=1}^{\infty} c(u) W(0,u)<\infty$.

\begin{prop}\label{small killing}
Suppose the non-killing process $Y$ satisfies $\mathbb{P}_i[T_0^{(Y)}<\infty]=1$ for any $i\geq 1$. Then
\begin{equation*}
	\lim_{x\rightarrow\infty} \mathbb{P}_x[T_0<T_\partial]>0,
\end{equation*}
if and only if the ``small killing" condition holds, that is,
\begin{equation*}
	\sum_{u=1}^{\infty} c(u) W(0,u)<\infty.
\end{equation*}
\end{prop}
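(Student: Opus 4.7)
The proof will follow the same two-step pattern as the preceding proposition, with the time-change argument modified so that the clock is $A_t=\int_0^t c(Y_s)\,ds$ (with no extra $+t$).

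For sufficiency, assume $C:=\sum_{u\geq 1} c_u W(0,u)<\infty$. The bound $W^{(c)}(0,u)\leq e^C W(0,u)$ obtained in the proof of Theorem~\ref{downward integral} immediately gives
\[
Z^{(c)}(0,\infty)=1+\sum_{u\geq 1} c_u W^{(c)}(0,u)\leq 1+e^C C<\infty.
\]
Then from the explicit harmonic formula \eqref{h(i)},
\[
\mathbb{P}_x[T_0<T_\partial]=\frac{Z^{(c)}(x,\infty)}{Z^{(c)}(0,\infty)}\geq \frac{1}{Z^{(c)}(0,\infty)}>0
\]
uniformly in $x$, so $\lim_{x\to\infty}\mathbb{P}_x[T_0<T_\partial]>0$.

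For the converse, write $\mathbb{P}_x[T_0<T_\partial]=\mathbb{E}_x[\exp(-Z)]$ with $Z=\int_0^{T_0^{(Y)}} c(Y_s)\,ds$ via the construction of $X$ from $Y$. Time-change $Y$ by $A_t=\int_0^t c(Y_s)\,ds$: let $\gamma_t=\inf\{s\geq 0: A_s>t\}$ and $\tilde Y_t=Y_{\gamma_t}$. Because $Y$ is downward skip-free, the time-changed process $\tilde Y$ is again a single death process, living on $\{0\}\cup\{u:c_u>0\}$, with $\tilde q_{ij}=q^{(Y)}_{ij}/c_i$, and its hitting time of $0$ satisfies $\tilde T_0=A_{T_0^{(Y)}}=Z$. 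Repeating (without the $+1$ term) the recursion performed in the proof of the preceding proposition shows $\tilde G_n^{(k)}=G_n^{(k)}$ and hence
\[
\tilde W(0,u)=c_u\,W(0,u), \qquad u\geq 1.
\]
Now assume $\lim_{x\to\infty}\mathbb{E}_x[e^{-\tilde T_0}]>0$. Fubini together with the monotonicity of $\mathbb{P}_\cdot[\tilde T_0<s]$ in the starting point (from skip-freeness of $\tilde Y$) and bounded convergence yield
\[
\int_0^\infty e^{-s}\lim_{x\to\infty}\mathbb{P}_x[\tilde T_0<s]\,ds=\lim_{x\to\infty}\mathbb{E}_x[e^{-\tilde T_0}]>0,
\]
so there exists $s_0>0$ with $\lim_{x\to\infty}\mathbb{P}_x[\tilde T_0<s_0]>0$, i.e., $\tilde Y$ comes down from infinity. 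Proposition~\ref{no-killing unique} applied to $\tilde Y$ then gives $\sum_{u\geq 1}\tilde W(0,u)=\sum_{u\geq 1} c_u W(0,u)<\infty$.

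The main obstacle is the technical one of making the time-changed process $\tilde Y$ rigorous when $c$ vanishes at some states, since then $A$ is not strictly increasing. The remedy is to note that the skip-free structure of $Y$ guarantees that, on the reduced state space $\{0\}\cup\{u:c_u>0\}$, the time change simply collapses every $Y$-sojourn at a $c=0$ state, yielding a bona fide single death process whose rates are those indicated above; the formula $\tilde W(0,u)=c_u W(0,u)$ remains valid (the summands with $c_u=0$ contribute nothing on either side), so Proposition~\ref{no-killing unique} applies verbatim. Once this verification is in place, the argument above closes the equivalence.
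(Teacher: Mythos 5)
Your overall route is the same as the paper's: sufficiency from the harmonic-function formula \eqref{h(i)} together with $Z^{(c)}(0,\infty)<\infty$, and necessity by time-changing $Y$ with the clock $\int_0^t c(Y_s)\,\d s$, extracting coming down from infinity for the time-changed chain via Fubini and monotonicity, and invoking Proposition \ref{no-killing unique}. The sufficiency half and the Fubini/monotonicity step are fine.

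There is, however, a genuine flaw in the one step where you deviate from the paper: the identification of the time-changed generator. When $c$ vanishes at some states, the chain $\tilde Y$ lives on $E_c=\{u:c_u>0\}\cup\{0\}$, and its jump rates are \emph{not} $\tilde q_{ij}=q^{(Y)}_{ij}/c_i$: collapsing the sojourns at $c=0$ states reroutes mass through those states, so the jump distribution out of $i\in E_c$ is governed by the first return of the embedded chain of $Y$ to $E_c$, not by the one-step rates of $Y$. Consequently the claimed recursion $\tilde G_n^{(k)}=G_n^{(k)}$ (which also implicitly re-indexes over all of $E_+$ rather than over $E_c$) does not go through as stated, and your ``remedy'' paragraph asserts rather than proves that the rates are ``those indicated above.'' The conclusion $\sum_{u}\tilde W(0,u)=\sum_u c_u W(0,u)$ is nonetheless correct, but the clean way to get it — and the way the paper does — is to bypass the rate matrix of $\tilde Y$ entirely and compute the expected lifetime directly from the occupation-time formula \eqref{measure2}:
\begin{equation*}
\mathbb{E}_x[\tilde T_0]=\mathbb{E}_x\!\left[\int_0^{T_0^{(Y)}} c(Y_s)\,\d s\right]=\sum_{u\in E_c} c(u)\bigl(W(0,u)-W(x,u)\bigr)\xrightarrow[x\to\infty]{}\sum_{u=1}^\infty c(u)W(0,u),
\end{equation*}
using monotone convergence; the equivalence between positivity of $\lim_x\mathbb{E}_x[e^{-\tilde T_0}]$ and finiteness of this limit then follows from the probabilistic parts of Proposition \ref{no-killing unique} (statements (1) and (2)), which need only that $\tilde Y$ is downward skip-free, not its explicit rates. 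With that substitution your argument closes; as written, the rate identification is the gap.
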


\begin{proof}
On the one hand, it follows from \eqref{h(i)} that if $\sum_{u=1}^{\infty}c(u)W(0,u)<\infty$, then
\begin{equation*}
	\lim_{x\rightarrow\infty} \mathbb{P}_x[T_0<T_\partial]\geq \frac{1}{Z^{(c)}(0,\infty)}>0.
\end{equation*}

On the other hand, we assume 
\begin{equation*}
	E_c=\{x\geq 1: c(x)>0\}
\end{equation*}
is an infinite set without loss of generality and we denote $E_c=\{x_n\}_{n\geq 1}$ with $x_i<x_{i+1}$ for any $i\geq 1$. And we define a time-changed process of $Y$ on $E_c$ as follows. Define an additive functional 
\begin{equation*}
	\bar{A}_t=\int_{0}^{t} c(Y_s) \d s,\quad \text{ for } t<T_0^{(Y)},
\end{equation*}
and let $\bar{A}_\infty=\lim_{t\rightarrow \infty}\bar{A}_t$ on the event $\{T_0^{(Y)}=\infty\}$.
Its right inverse function is defined as
\begin{equation*}
	\bar{\gamma}_t=\inf\{s>0:\bar{A}_s>t\},\quad \text{ for } t<\bar{A}_{T_0^{(Y)}}.
\end{equation*}
Then the process $\bar{Y}$ is defined, stopped at time $\bar{T}_0^{(Y)}\leq \infty$, by letting 
\begin{equation*}
	\bar{Y}_t=Y_{\bar{\gamma}_t},\quad \text{ for } t<\bar{A}_{T_0^{(Y)}}.
\end{equation*} 
Define the first passage time of $\bar{Y}$ by
\begin{equation*}
	\bar{T}_a=\inf\{t\geq 0: \bar{Y}_t=a\}, \quad  \bar{T}_{a+}=\inf\{t\geq 0: \bar{Y}_t\geq a\},
\end{equation*}
for $a\in E_c$ and $\bar{T}_\infty=\lim_{n\rightarrow \infty}\bar{T}_{x_n+}$
with the convention $\inf\emptyset=\infty$. It follows from the results for time-changed process that the first passage time
\begin{equation*}
	\bar{T}_a=\bar{A}_{T_a^{(Y)}} \text{ on the event }\{T_a^{(Y)}<\infty\},\quad \bar{T}_{a+}=\bar{A}_{T_{a+}^{(Y)}} \text{ on the event } \{T_{a+}^{(Y)}<T_0^{(Y)}\}.
\end{equation*}
So the lifetime of $\bar{Y}$ can be regarded as
\begin{equation*}
	\bar{T}_0=
	\begin{cases}
		\bar{A}_{T_0^{(Y)}}& \text{ on the event } \{T_0^{(Y)}<\infty\}\\
		\bar{T}_{\infty}& \text{ on the event }\{T_0^{(Y)}=\infty\}.
	\end{cases}
\end{equation*}
From the skip-free property of $Y$ and $\mathbb{P}_i[T_0^{(Y)}<\infty]=1$ for any $i\geq 1$, we have $\bar{Y}$ is a single death process on $E_c$.
According to the construction of $X$ by $Y$ and $c$, the proof of the above remark and Proposition \ref{no-killing unique}, we have 
\begin{equation*}
	\lim_{n\rightarrow\infty}\mathbb{P}_{x_n}[T_0<T_\partial]=\lim_{n\rightarrow\infty} \mathbb{E}_{x_n}[\exp(-\bar{A}_{T_0^{(Y)}})]>0 
\end{equation*}
is equivalent to 
\begin{equation*}
	\lim_{n\rightarrow\infty}\mathbb{E}_{x_n}[\bar{A}_{T_0^{(Y)}}]<\infty.
\end{equation*}
According to \eqref{measure2}, we get that
\begin{align*}
	\mathbb{E}_x[\bar{A}_{T_0^{(Y)}}]=\sum_{u\in E_c} \mathbb{E}_x\[\int_{0}^{\bar{A}_{T_0^{(Y)}}} \mathbf{1}_{[\bar{Y}_s=u]} \d s\]&=\sum_{u\in E_c}c(u) \mathbb{E}_x\[\int_{0}^{T_0^{(Y)}} \mathbf{1}_{[Y_s=u]} \d s\]\\
	&=\sum_{u\in E_c} c(u)(W(0,u)-W(x,u)).
\end{align*}
By monotone convergence theorem, we have
\begin{equation*}
	\lim_{n\rightarrow\infty}\mathbb{E}_{x_n}[\bar{A}_{T_0^{(Y)}}]=\sum_{u\in E_c}c(u)W(0,u),
\end{equation*}
which completes the proof.
\end{proof}

\subsection{``Bounded killing" case: proof of Theorem \ref{uni_exp_con}}

By noting that Theorems \ref{exi_uni} and \ref{exp_con} do not yield a sufficient condition for the uniform exponential convergence of the conditional distribution to QSD of $X$ in $E_+$ in the total variation norm, whereas Theorem \ref{uni_exp_con} provides a sufficient and necessary condition under the assumption of bounded killing, that is, $\sup_{n\geq 1} c_n<\infty$. In this subsection, we also assume that the non-killing process $Y$ corresponding to $X$ satisfies $\mathbb{P}_i[T_0^{(Y)}<\infty]=1$ for any $i\geq 1$.

We are now turning to the proof of Theorem \ref{uni_exp_con}. The idea of the proof benefits from \cite[Theorem 2.1]{CV16}.

\begin{proof}[Proof of Theorem \ref{uni_exp_con}]
According to \cite[Theorem 2.1]{CV16}, the uniform exponential convergence to QSD is equivalent to the condition that there exists a probability measure $\nu$ such that 

(A1) there exists $t_0,C_1>0$ such that for any $x\geq 1$,
\begin{equation*}
	\mathbb{P}_x[X_{t_0}\in \cdot| t_0< T_0\wedge T_\partial]\geq C_1\nu(\cdot);
\end{equation*}

(A2) there exists $C_2>0$ such that for any $x\geq 1$ and any $t\geq 0$,
\begin{equation*}
	\mathbb{P}_\nu[T_0\wedge T_\partial>t]\geq C_2\mathbb{P}_x[T_0\wedge T_\partial >t].
\end{equation*}

Firstly, we prove the sufficiency. 

Assume $\sum_{u=1}^{\infty} W(0,u)<\infty$. According to \cite[Proposition 2.5]{Z18}, for the non-killing process $Y$ corresponding to $X$, we obtain that for any $\epsilon>0$, there exists $k_1$ large enough such that 
\begin{equation*}
	\sup_{x>k_1} \mathbb{E}_x[T_{k_1}^{(Y)}]<\epsilon.
\end{equation*}
Then it follows from an argument similar to the proof of (C2) in Theorem \ref{exp_con} that for $\rho=\sup_n c_n+q_{10}+1$, we can find $k_1$ large enough and let $K=\{1,2,\cdots, k_1\}$, $\tau_K=\inf\{t\geq 0:X_t\in K\}$ such that 
\begin{equation}\label{sup E_x}
	A=\sup_{x\geq 1}\mathbb{E}_x[e^{\rho(\tau_K\wedge T_\partial)}]\leq \sup_{x> k_1} \mathbb{E}_x[e^{\rho T_{k_1}^{(Y)}}]<\infty.
\end{equation}

Now, we are going to prove (A1) and (A2) with $\nu=\delta_{k_1}$. 
Since $X$ is irreducible, by using \eqref{irre} and let $C^{-1}=\inf_{x\in K} \mathbb{P}_{k_1}[X_1=x]>0$, we have that for any $t\geq 0$,
\begin{equation*}
	\sup_{x\in K} \mathbb{P}_x[T_0\wedge T_\partial>t]\leq C\mathbb{P}_{k_1}[T_0\wedge T_\partial>t].
\end{equation*}
Since $\rho > \sup_n c_n+q_{10}$, we have that for any $s\geq 0$, $\inf_{x\geq 1} \mathbb{P}_x[T_0\wedge T_\partial>s]\geq e^{-\rho s}$.
Then by using Markov property, for any $t\geq s\geq 0$,
\begin{equation*}
	\mathbb{P}_{k_1}[T_0\wedge T_\partial >t]\geq e^{-\rho s}\mathbb{P}_{k_1}[T_0\wedge T_\partial >t-s].
\end{equation*}
According to \eqref{sup E_x}, by using Chebyshev inequality, we obtain
\begin{equation*}
	\mathbb{P}_x[t<\tau_K \wedge T_\partial]\leq Ae^{-\rho t}.
\end{equation*}
By using strong Markov property, we have
\begin{align*}
	\mathbb{P}_x[T_0\wedge T_\partial >t]&=\mathbb{P}_x[t<\tau_K\wedge T_0\wedge T_\partial]+\mathbb{P}_x[\tau_K\leq t<T_0\wedge T_\partial]\\
	&\leq Ae^{-\rho t}+\int_{0}^{t} \sup_{y\in K} \mathbb{P}_y[t-s<T_0\wedge T_\partial]\mathbb{P}_x[\tau_K\wedge T_\partial \in \d s]\\
	&\leq A\mathbb{P}_{k_1}[T_0\wedge T_\partial>t]+C\int_{0}^{t} \mathbb{P}_{k_1} [t-s<T_0\wedge T_\partial]\mathbb{P}_x[\tau_K\wedge T_\partial \in \d s]\\
	&\leq A\mathbb{P}_{k_1}[T_0\wedge T_\partial>t]+C\mathbb{P}_{k_1}[t<T_0\wedge T_\partial]\int_{0}^{t} e^{\rho s}\mathbb{P}_x[\tau_K\wedge T_\partial \in \d s]\\
	&\leq A(1+C) \mathbb{P}_{k_1}[t<T_0\wedge T_\partial],
\end{align*}
which implies (A2).

Then we prove (A1). For any $x\geq 1$,
\begin{equation*}
	\mathbb{P}_x[\tau_K<t]\geq \mathbb{P}_x[\tau_K<t<T_\partial]=\mathbb{P}_x[t<T_\partial]-\mathbb{P}_x[t<\tau_K\wedge T_\partial]\geq e^{-(\rho-1)t}-Ae^{-\rho t}.
\end{equation*}
So there exists $t_0>0$ such that 
\begin{equation*}
	\inf_{x\geq 1}\mathbb{P}_x[\tau_K\leq t_0-1]>0.
\end{equation*}
By using strong Markov property and irreducibility of $X$, we have
\begin{align*}
	C_1=\inf_{x\geq 1} \mathbb{E}_x[\mathbf{1}_{[\tau_K<t_0-1]} \inf_{y\in K}\mathbb{P}_y[X_1=k_1]e^{-q_{k_1}(t_0-1-\tau_K)}]&\leq \inf_{x\geq 1} \mathbb{P}_x[X_{t_0}=k_1]\\
	&\leq \inf_{x\geq 1} \mathbb{P}_x[X_{t_0}=k_1|t_0<T_0\wedge T_\partial],
\end{align*}
which implies (A1). And we complete the proof of the sufficiency.

Secondly, we prove the necessity. Considering that $\mathbb{P}_i[T_0^{(Y)}<\infty]=1$ for any $i\geq 1$,  
it follows from \cite[Proposition 2.5]{Z18} or \cite[Proposition A.1]{Y22} that $S=\sup_{n\geq 1}\mathbb{E}_n[T_0^{(Y)}]=\sum_{u=1}^{\infty} W(0,u)$. 

Since the uniform exponential convergence holds, by using \cite[Theorem 2.1]{CV16}, there exists a probability measure $\nu$ on $E_+$ and $t_0, C_1>0$ such that for any $x\geq 1$, 
\begin{equation*}
	\mathbb{P}_x[X_{t_0}\in \cdot|t_0<T_0\wedge T_\partial]\geq C_1\nu(\cdot).
\end{equation*}
So there exists $k_2>0$ such that, $C_2=\nu({\{1,2,\cdots,k_2\}})>0$. Since $\sup_n c_n<\infty$, we have
\begin{equation*}
	\inf_{x\geq 1}\mathbb{P}_x[Y_{t_0}\leq k_2]\geq C_1\nu({\{1,2,\cdots,k_2\}})\inf_{x\geq 1}\mathbb{P}_x[T_0\wedge T_\partial>t_0]\geq C_1 C_2 e^{-(\rho-1)t_0}.
\end{equation*}
So for any $x>k_2$,
\begin{equation*}
	\mathbb{P}_x[T_{k_2}^{(Y)}\geq t_0]\leq \mathbb{P}_x[Y_{t_0}>k_2]\leq 1-C_1 C_2e^{-(\rho-1)t_0}.
\end{equation*}
By using Markov property, for any $n\geq 1$,
\begin{align*}
	\mathbb{P}_x[T_{k_2}^{(Y)}>(n+1)t_0]\leq \mathbb{P}_x[T_{k_2}^{(Y)}>n t_0] \sup_{y> k_2}\mathbb{P}_y[T_{k_2}^{(Y)}\geq t_0]&\leq  \mathbb{P}_x[T_{k_2}^{(Y)}>n t_0] (1-C_1 C_2e^{-(\rho-1)t_0})\\
	&\leq (1-C_1 C_2e^{-(\rho-1)t_0})^{n+1}.
\end{align*}
By using Fubini theorem, for any $x\geq k_2$, 
\begin{equation*}
	\mathbb{E}_x[T_{k_2}^{(Y)}]=\int_{0}^{\infty} \mathbb{P}_x[T_{k_2}^{(Y)}>t] \d t\leq \sum_{n=0}^{\infty} t_0 \mathbb{P}_x[T_{k_2}^{(Y)}>n t_0] \leq t_0\sum_{n=0}^{\infty} (1-C_1 C_2e^{-(\rho-1)t_0})^n<\infty.
\end{equation*}
So $M_{k_2}=\sup_{x\geq k_2+1} \mathbb{E}_x[T_{k_2}^{(Y)}]<\infty$. By using strong Markov property and skip-free property, we have
\begin{equation*}
	\mathbb{E}_{k_2}[T_{k_2-1}^{(Y)}]=\frac{1}{q_{k_2}^{(Y)}}+\sum_{k\geq k_2+1} \frac{q_{k_2,k}}{q_{k_2}^{(Y)}} \mathbb{E}_{k}[T_{k_2-1}^{(Y)}]\leq \frac{1}{q_{k_2}^{(Y)}}+\sum_{k\geq k_2+1} \frac{q_{k_2,k}}{q_{k_2}^{(Y)}}(M_{k_2}+\mathbb{E}_{k_2}[T_{k_2-1}^{(Y)}]),
\end{equation*}
which implies that $\mathbb{E}_{k_2}[T_{k_2-1}^{(Y)}]<\infty$. So $M_{k_2-1}=\sup_{x\geq k_2}\mathbb{E}_x[T_{k_2-1}^{(Y)}]=M_{k_2}+\mathbb{E}_{k_2}[T_{k_2-1}^{(Y)}]<\infty$. By induction, we obtain that
$S=\sup_{x\geq 1}\mathbb{E}_x[T_0^{(Y)}]=\sum_{u=1}^{\infty} W(0,u)<\infty$, and we complete the proof of the necessity.
\end{proof}

\subsection{``Large killing" case: proof of Theorem \ref{large_killing}}

Recall that the large killing condition is: $\liminf_{n\rightarrow \infty}c_n>\inf_{n\geq 1} q_n$ in this paper. The large killing condition is satisfied in some concrete examples, such as the linear killing case in \cite{ZZ13} and \cite{CNZ14}.

We are now going to prove Theorem \ref{large_killing}. The idea of the proof benefits from \cite[Theorem 4.1]{V18}.

\begin{proof}[Proof of Theorem \ref{large_killing}]
Since 
\begin{equation*}
	\liminf_{n\rightarrow \infty} c_n>\inf_{n\geq 1} q_n,
\end{equation*}
there exists $k_1,l_1>0$ such that 
\begin{equation*}
	0<q_{k_1}<\inf_{n\geq l_1+1} c_n.
\end{equation*}
We will prove the conditions in Lemma \ref{exp_suf} with $\zeta=\delta_{k_1}$ and $k=l_1$. (C1) and (C3) can be proved similar to the proof in Theorem \ref{exp_con}, so we omit it. It remains to prove (C2) for $q_{k_1}< \rho<\inf_{n\geq l_1+1} c_n$. From the irreducibility of $X$ and \cite[Section 3]{V18}, $\rho_S<q_{k_1}<\rho$. For any $x\geq l_1+1$ and any $t>0$,
\begin{equation*}
	\mathbb{P}_x[T_{l_1}\wedge T_\partial >t]\leq e^{-\tilde{\rho} t},
\end{equation*}
where $\tilde{\rho}=\inf_{n\geq l_1+1} c_n$, which implies that
\begin{equation*}
	\mathbb{E}_x[e^{\rho(T_{l_1}\wedge T_\partial)}]=1+\int_{0}^{\infty} \rho e^{\rho s} \mathbb{P}_x[T_{l_1}\wedge T_\partial >s] \d s\leq 1+\frac{\rho}{\tilde{\rho}-\rho}.
\end{equation*}
So (C2) holds and we complete the proof of Theorem \ref{large_killing}.
\end{proof}


{\bf Acknowledgement}\
This work was supported by the National Nature Science Foundation of China (Grant No. 12171038), National Key Research and Development Program of China (2020YFA0712900).

{\bf Author Contributions}\
All authors wrote the main manuscript text. All authors reviewed the manuscript.

{\bf Conflict of interest}\
The authors have no competing interests to declare that are relevant to the content of this article.


\end{document}